\definecolor{celestialblue}{rgb}{0.29, 0.59, 0.82}
\definecolor{BlueMAP}{RGB}{37, 62, 119}
\newtcolorbox{boxblue}[1]{breakable,colback=white,colframe=BlueMAP,fonttitle=\bfseries,title=#1,left=4pt,right=4pt,top=4pt,bottom=6pt}
\newtcolorbox{boxgreen}[1]{breakable,colback=white,colframe=red,fonttitle=\bfseries,title=#1,left=4pt,right=4pt,top=4pt,bottom=6pt}
\numberwithin{equation}{section} 
\newcommand{\cB}{\ensuremath{\mathcal{B}}}
\newcommand{\cC}{\ensuremath{\mathcal{C}}}
\newcommand{\cD}{\ensuremath{\mathcal{D}}}
\newcommand{\cF}{\ensuremath{\mathcal{F}}}
\newcommand{\cH}{\ensuremath{\mathcal{H}}}
\newcommand{\cK}{\ensuremath{\mathcal{K}}}
\newcommand{\cL}{\ensuremath{\mathcal{L}}}
\newcommand{\cM}{\ensuremath{\mathcal{M}}}
\newcommand{\cN}{\ensuremath{\mathcal{N}}}
\newcommand{\cO}{\ensuremath{\mathcal{O}}}
\newcommand{\cP}{\ensuremath{\mathcal{P}}}
\newcommand{\cR}{\ensuremath{\mathcal{R}}}
\newcommand{\cV}{\ensuremath{\mathcal{V}}}
\newcommand{\cW}{\ensuremath{\mathcal{W}}}
\newcommand{\bB}{\ensuremath{\mathbb{B}}}
\newcommand{\bE}{\ensuremath{\mathbb{E}}}
\newcommand{\bG}{\ensuremath{\mathbb{G}}}
\newcommand{\bM}{\ensuremath{\mathbb{M}}}
\newcommand{\bN}{\ensuremath{\mathbb{N}}}
\newcommand{\bP}{\ensuremath{\mathbb{P}}}
\newcommand{\bR}{\ensuremath{\mathbb{R}}}
\newcommand{\bU}{\ensuremath{\mathbb{U}}}
\newcommand{\by}{\ensuremath{\textbf{y}}}
\newcommand{\rY}{\ensuremath{\mathrm{Y}}}
\def\[{\left[}
\def\]{\right]}
\def\<{\langle}
\def\>{\rangle}
\def\({\left(}
\def\){\right)}
\def\[{\left [}
\def\]{\right]}
\def\({\left(}
\def\){\right)}
\def\wt{\widetilde}
\newcommand{\bb}{{\bf b}}
\def\bw{{\bf w}}
\def\bu{{\bf u}}
\def\bv{{\bf v}}
\newcommand{\bx}{{\bf x}}
\newcommand{\bbR}{{\bf R}}
\newcommand{\bbQ}{{\bf Q}}
\def\vp{\varphi}
\def\Chi{\raise .3ex\hbox{\large $\chi$}}
\newcommand{\prox}{\mathrm{prox}}
\newcommand{\epi}{\text{epi}}
\newcommand{\pa}[1]{\left(#1\right)}
\newcommand{\norm}[1]{\Vert #1 \Vert}
\newcommand{\Vn}{\ensuremath{{V_n}}}
\newcommand{\Wm}{\ensuremath{{W_m}}}
\newcommand{\cond}{\; :\;}
\newcommand{\wc}{\ensuremath{\mathrm{wc}}}
\newcommand{\ms}{\ensuremath{\mathrm{ms}}}
\newcommand{\wca}{\ensuremath{\mathrm{wca}}}
\newcommand{\rad}{\ensuremath{\mathrm{rad}}}
\newcommand{\joint}{\ensuremath{\text{joint}}}
\newcommand{\aff}{\ensuremath{\text{aff}}}
\newcommand{\dist}{\operatorname{dist}}
\newcommand{\opt}{\ensuremath{\mathrm{opt}}}
\DeclareMathOperator*{\argmax}{arg\,max}
\DeclareMathOperator*{\argmin}{arg\,min}
\newcommand{\dx}{\ensuremath{\mathrm dx}}
\newcommand{\dy}{\ensuremath{\mathrm dy}}
\DeclareMathOperator*{\vspan}{span}
\newcommand{\eps}{\ensuremath{\varepsilon}}
\newcommand{\e}{\ensuremath{\varepsilon}}
\newcommand{\cen}{\ensuremath{\mathrm{cen}}}
\newtheorem{theorem}{Theorem}[section]
\newtheorem{lemma}[theorem]{Lemma}
\newtheorem{corollary}[theorem]{Corollary}
\newtheorem{remark}[theorem]{Remark}
\theoremstyle{definition}
\newtheorem{definition}{Definition}
\begin{document}

\title{Inverse Problems: A Deterministic Approach \\using Physics-Based Reduced Models
}

\author{Olga Mula}

\maketitle

\abstract{
These lecture notes summarize various summer schools that I have given on the topic of solving inverse problems (state and parameter estimation) by combining optimally measurement observations and parametrized PDE models. After defining a notion of optimal performance in terms of the smallest reconstruction error that any reconstruction algorithm can achieve, the notes present practical numerical algorithms based on nonlinear reduced models for which one can prove that they can deliver a performance close to optimal. We also discuss algorithms for sensor placement with the approach. The proposed concepts may be viewed as exploring alternatives to Bayesian inversion in favor of more deterministic notions of accuracy quantification.
}

\tableofcontents

\section{Introduction}

Inverse problems aim to find the causal factors that lead to a set of observed effects. As the term itself indicates, they are the inverse of direct or forward problems, which start with the causes and then calculate the effects. By their very nature, both forward and inverse  problems are ubiquitous in science and engineering. Let us start with a few examples. Suppose we are interested in the sun's surface temperature. One way of accessing to this information is by measuring the amount of light that the sun emits at each wavelength. The forward problem here consists of describing spectral radiance as a function of wavelength and temperature. The most common physical model giving this relation is Planck's law of black body radiation. The inverse problem is the one of estimating the temperature from the observed spectral radiance. As another example, we could mention the famous inverse problem of ``hearing the shape of a drum''. This question can be traced back at least to the works of Weyl in the early 1910s (see \cite{Weyl1911}), and has motivated important advances in spectral theory. The idea is that the frequencies at which a drumhead can vibrate depend on its shape. The forward problem here is to develop a physical model relating a given shape to the acoustic frequencies. This is described by the Helmholtz equation, and the acoustic frequencies are the eigenvalues of a Laplacian in space. A central inverse problem is whether the shape can be predicted if the frequencies (namely the eigenvalues of the operator) are known.  In the early 1990s, it was proven that different shapes can yield the same acoustic frequencies, thus answering negatively to the question as to whether one can hear the shape of a drum (see \cite{GWW1992}).

The above examples illustrate the main properties of inverse problems. They are typically \emph{ill-posed} in the sense that they do not necessarily have unique solutions. They are \emph{unstable}: deviations in the observed input data caused by measurement noise can cause arbitrarily large perturbations in the results. They are also \emph{nonlocal}: in the example about the drum, the observed frequencies depend on the propagation of sound waves everywhere on the drum's surface and their reflections at the border. In time-dependent phenomena, inverse problems are also \emph{noncausal}: if we try to estimate the initial temperature distribution in a room based on the observed temperature at some points at the final time, we find that vastly different initial conditions may have produced the final condition, at least within the accuracy limit of our measurements.

Several different approaches exist to solve inverse problems. Their common denominator is that they all incorporate additional a priori information in order to fight against ill-posedness, and derive useful reconstructions. Note that we have already tacitly introduced a priori hypotheses in the above examples by assuming that the studied phenomena can be well  described by certain physical models. These models usually come in the form of ordinary or partial differential equations. Taking them as priors gives raise to a large family of strategies aiming to blend complex physical models with often vast data sets which are now routinely available in many applications. The Bayesian approach is probably the most widespread technique belonging to this family (see, e.g., \cite{Stuart2010, DS2017}). One models available a priori information as a probability distribution (the prior), and uses the measurement data to compute a posterior distribution that represents the uncertainty in the solution. The approach has the appealing property of providing a quantification of uncertainty in the reconstructions. However, since it is based on sampling the posterior distribution, it quickly suffers from a high numerical cost, especially in a high dimensional framework.

In these lecture notes we present an alternative approach to Bayesian inversion which allows to provide a more deterministic accuracy quantification of the outputs. Instead of formulating the priors as probability distributions, one only assumes that a certain parametrized PDE is a good physical model for the system under consideration. Taking this point of view, we discuss optimality criteria which define intrinsic limits regarding the best possible reconstruction accuracy that one can achieve when solving an inverse problem. Reduced Ordel Models play a significant role in this approach since they can be used to build efficient computational strategies whose performance approaches optimality.

It is important to note that physical models are actually not the only prior assumption that one can make to solve inverse problems. One can alternatively resort to regularization methods which impose certain smoothness requirements or closeness to certain reference functions in order to build robust (pseudo-) inverses to the ill-posed inverse problems. We refer to \cite{EHN1996} for linear regularization methods and to \cite{BB2018} for an overview of state of the art nonlinear ones. This approach is particularly appealing when there is no clear description of the problems in terms of a physical model. Certain applications related to imaging such as tomography inversion are particularly well suited for this approach.

These lecture notes are based mainly on the results of \cite{MMPY2015, MPPY2015, BCDDPW2017, BCMN2018, CDDFMN2020, CDMN2020}. We will outline connections to other contributions and other topics along the way.  

\section{Forward and Inverse Problems}
\label{sec:intro-fwd-inv}

Parametrized partial differential equations play a central role in the approach that we present. They are of common use to model complex physical systems, and are routinely involved in design and decision-making processes. Such equations can generally be written in abstract form as
\begin{equation}
\cP(u,y)=0,
\label{eq:genpar}
\end{equation}
where $\cP$ is a partial differential operator, and $y=(y_1,\dots,y_p)$ is a vector of
scalar parameters ranging in some domain $\rY\subset \bR^p$.
We assume well-posedness, that is, for any $y\in Y$  the problem admits a unique solution $u=u(y)$ in some 
Hilbert space $V$ whose elements depend on a physical variable $x$ ranging in a domain $\Omega\subset \bR^d$.
The variable $x$ usually refers to space but it is not limited to that meaning, and it may also refer to more elaborate sets of variables such as space, time, momentum, and possibly others. We may thus regard $u$ as a function $(x,y)\mapsto u(x, y)$ from $\Omega\times \rY$ to $\bR$,
or we may also consider the \emph{parameter to solution map}
\begin{equation}
y\mapsto u(y),
\label{eq:solmap}
\end{equation}
from $\rY$ to $V$. This map is typically nonlinear, as well as the {\it solution manifold}
\begin{equation}
\cM:=\{u(y) \, : \, y\in Y\}\subset V
\label{eq:manifold}
\end{equation}
which describes the collection of all admissible solutions. Throughout this document,
we assume that $\rY$ is compact in $\bR^d$ and that the map \eqref{eq:solmap}
is continuous. Therefore $\cM$ is a compact set of $V$. 
We sometimes refer to the solution $u(y)$ as the \emph{state} 
of the system for the given parameter vector $y$.

The parameters $y$ are used to represent physical quantities such as 
diffusivity, viscosity, velocity, source terms, or the geometry of the physical domain
in which the PDE is posed.  In several relevant instances, $y$ may be high or even
countably infinite dimensional, that is, $p\gg1$ or $p=\infty$.

Given this general setting, two families of problems may be considered:
\begin{enumerate}
\item \emph{Forward problems} are concerned with the parameter to solution map \eqref{eq:solmap}. For a given parameter $y\in \rY$, the goal is to develop numerical schemes to solve the PDE problem \eqref{eq:genpar}. This is an old topic with a long history in numerical analysis. It can be addressed with classical discretization techniques such as finite element, finite volume spectral methods, or, less classically, with machine learning techniques such as, for example, Physics-Informed Neural Networks. For general references to these methods, we refer to \cite{EG2013, Leveque2002, RPK2019, BM1997, CHQ2012}.

In numerous design and decision-making processes, one is often confronted to optimization problems defined over the solution manifold $\cM$. The algorithms for this task are usually iterative and require to evaluate many solutions $u(y)$ on a large set of dynamically updated parameters $y\in\rY$. Computations cannot be addressed rapidly unless the overall complexity has been appropriately reduced, and motivates the search for accurate methods to approximate the family of solutions very quickly at a reduced computational cost. This task, usually known as \emph{reduced modelling}, \emph{model order reduction}, or \emph{metamodeling}, has classically been addressed by approximating $\cM$ with well-chosen linear subspaces of $V$. However, it can be expected to be successful only when the Kolmogorov $n$-width of $\cM$ decays fast with $n$. For a given $n\geq 1$, this quantity is defined as
\begin{equation}
\label{eq:kolmo}
d_n(\cM)\coloneqq \inf_{ \substack{V_n \subset V \\ \dim(V_n)\leq n}} \sup_{u\in \cM} \inf_{v\in V_n} \Vert u - v \Vert,
\end{equation}
and it quantifies the best approximation of $\cM$ that one can achieve when using linear subspaces of $V$ of dimension lower or equal to $n$. While $d_n(\cM)$ decays quickly for certain families of parabolic or elliptic problems (see \cite{CD2015}), most transport-dominated problems are expected to present a slow decaying width and require to study nonlinear approximation methods. This is a field of very active study which we will not cover in these notes but it is tightly related to some computational issues for solving inverse problems with transport phenomena that we outline later on.

\item \emph{Inverse Problems} occur when the parameter $y$ is not given, and, instead, we only observe a vector of {\it linear} measurements
\begin{equation}
z_i=\ell_i(u),\quad i=1,\dots,m,
\end{equation}
where each $\ell_i\in V'$ is a known continuous linear functional on $V$. 
The $\ell_i$ are a mathematical
model for sensors that capture some partial information on the 
unknown solution $u(y)\in V$. We will also sometimes use notation in terms of the vector of observations
\begin{equation}
z=(z_1,\dots,z_m)^T=\ell(u) \in \bR^m, \quad \ell=(\ell_1,\dots,\ell_m).
\end{equation}
In this setting, the goal is to recover the unknown state $u\in \cM$ from $z$ or even the underlying parameter vector $y\in Y$ for which $u=u(y)$.
Therefore, in an idealized setting, one observes the result of the composition map
\begin{equation}
\label{eq:fwd-maps}
y\in Y \mapsto u \in \cM \mapsto z\in \bR^m.
\end{equation}
for the unknown $y$. In inverse problems, the goal is to ``revert the sense of the arrows'' in the above cascade of forward mappings \eqref{eq:fwd-maps}. This leads to two main types of inverse problems:
\begin{enumerate}
\item
\emph{State estimation:} recover an approximation $u^*$ of {the state} $u$ from the observation {$z=\ell(u)$} and assuming that $u$ belongs to the manifold $\cM$. This inverse problem is linear in nature because the forward map $\ell:u\mapsto \ell(u)=z$ is linear. It is however challenging because the target $u$ lives in $V$, which is a space of typically very high or infinite dimension. In addition, the information that $u$ belongs to $\cM$ is difficult to handle given that $\cM$ has a complicated geometry, which is only partially known to us by solving forward problems $y\mapsto u(y)$ for different values of $y\in \rY$.
\item
\emph{Parameter estimation: }recover an approximation $y^*$ of the parameter $y$ from the observation {$z = \ell(u)$ when $u=u(y)$}. This is a nonlinear inverse problem, for which the prior information available on $y$ is given by the domain $Y$.
\end{enumerate}
\end{enumerate}

Note that so far we have carried the discussion in a very idealized setting since we have assumed that:
\begin{itemize}
\item the modeling of the sensor response through the $\ell_i$ is perfect,
\item there is no observation noise, and, even if we had noise, we would need to suppose a certain model for it,
\item the PDE model perfectly describes reality, that is, there exists a parameter $y\in \rY$ such that $u=u(y)$ for the observations $\ell(u)$.
\end{itemize}
Of course, none of these modeling assumptions are satisfied in reality, and it is important to estimate their impact. However, they add an extra layer of complexity in the mathematical analysis of optimal reconstruction benchmarks that we are interested in. We thus proceed in two steps: we first place ourselves in the idealized setting without modeling errors, and analyze optimality benchmarks related to intrinsic limits regarding the best possible reconstruction accuracy. We then extend the analysis to account for modeling errors.

\section{Optimality Benchmarks for State Estimation}
\label{sec:optim-benchmarks}

Let us place ourselves in the idealized setting without modeling errors, and let us consider the state estimation problem of approximating an unknown function $u\in V$ from data given by $m$ linear measurements
\begin{equation}
z_i = \ell_i(u), \quad i=1,\dots,m,
\end{equation}
where the $\ell_i$ are $m$ linearly independent linear functionals over $V$.

Denoting by $\omega_i\in V$ the Riesz
representers of the $\ell_i$, such that $\ell_i(v)=\<\omega_i,v\>$ for all $v\in V$, and defining the observation space
\begin{equation}
W:={\rm span}\{\omega_1,\dots,\omega_m\},
\end{equation}
the measurement data $z = (z_1,\dots, z_m)^T$ are equivalently represented by
\begin{equation}
\omega=P_{W}u,
\end{equation}
where $P_W$ is the orthogonal projection from $V$ onto $W$. This equivalence comes from the fact that we can write
$$
P_W u = \sum_{i=1}^n c_i \omega_i,
$$
for some coefficients $c_i\in \bR$. Knowing the measurement data allows us to write that
$$
z_i = \ell_i(u) = \left< \omega_i, u\right> = \left< \omega_i, P_W u\right> = \sum_{j=1}^m c_j \left< \omega_i, \omega_j \right>,\quad \forall i=1,\dots, m.
$$
Thus the vector of coefficients $c =(c_1,\dots,c_m)^T$ is the unique solution to the linear system 
$$
\bB \,c = z
$$
where
$$
\bB \coloneqq ( \left< \omega_i, \omega_j \right>)_{1\leq i, j \leq m} 
$$
is an invertible matrix because the $\{ \omega_i \}_{i=1}^m$ are linearly independent (because their associated linear functionals $\{ \ell_i \}_{i=1}^m$ are assumed to be  independent). Therefore knowing $z$ is equivalent to knowing $c$, and also $w =P_W u$.

A {\it recovery algorithm} is a map
\begin{equation}
A: W \to V
\end{equation}
and the approximation to $u$ obtained by this algorithm is
\begin{equation}
u^*=A(w)=A(P_Wu).
\end{equation}
Note that, in our terminology, an algorithm can be computationally feasible or not. At this stage, it is just a mapping from the observation space $W$ to the ambient space $V$, and we do not attach any notion of practical feasibility to it. We will add this idea in a second stage.

The construction of $A$ should be based on the available prior 
information that describes the properties of the unknown $u$, and the evaluation of its performance needs
to be defined in some precise sense. Two distinct avenues can be followed: 
\begin{itemize}
\item
In the {\it deterministic setting}, the sole prior information 
is that $u$ belongs to the solution manifold $\cM$ that we defined in equation \eqref{eq:manifold}. The performance of an algorithm $A$ over the class $\cM$ is measured by the ``worst case'' reconstruction error
\begin{equation}
\label{eq:err-wc}
E_{\wc}(A,\cM)=\sup\{\|u-A(P_Wu)\|\; : \; u\in\cM\}.
\end{equation}
The problem of finding an algorithm $A$ that minimizes $E_{\wc}(A, \cM)$ is 
called {\it optimal recovery}. It has been extensively studied for convex sets $\cM$ that are 
balls of smoothness classes \cite{Bojanov1994,MR1977,NW2008} but note that this is not the  present case for our solution manifold.
\item
In the {\it stochastic setting}, the prior information on $u$ is 
described by a probability distribution $p$ on $V$,
which is supported on $\cM$, typically induced by a probability distribution on $Y$ that is assumed
to be known.
It is then natural to measure the performance of an algorithm
in an averaged sense, for example through the mean-square error
\begin{equation}
E_{\ms}(A,p)=\bE(\|u-A(P_Wu)\|^2)=\int_V \|u-A(P_W u)\|^2 dp(u).
\label{eq:err-ms}
\end{equation}
This stochastic setting is the starting point { for} {\it Bayesian estimation} 
methods \cite{DS2017}. Let us observe that for any algorithm $A$ one has $E_{\ms}(A,p)\leq E_{\wc}(A,\cM)^2$.
\end{itemize}

In the following, we concentrate on the deterministic setting according to the
above distinction. In this setting, the performance benchmark of recovery algorithms is given by
\begin{equation}
\label{eq:opt-alg}
E^*_{\wc}(\cM)=\inf_{A:W\to V}E_{\wc}(A,\cM),
\end{equation}
where the infimum is taken over all possible maps $A:W\to V$.

In \cite{BCDDPW2017, CDDFMN2020}, the authors give a simple mathematical description of an optimal map that meets this benchmark. To define it, we note that in the absence of model bias and when a noiseless measurement $w=P_W u$ is given, our knowledge on $u$ is that it belongs to the set
\begin{equation}
\cM_w:=\cM\cap (\omega+W^\perp).
\label{eq:M-omega}
\end{equation}
We refer to Figure \ref{fig:bird} fo a graphical illustration of $\cM_\omega$ and the ideas that we are about to introduce. The figure helps to see that $\cM_\omega$ can be understood as the ``slice'' of the manifold $\cM$ which agrees with a given observation $\omega\in W$. This ``slice'' can be a fully connected set, or composed of non connected sets, and it could even be the empty set depending on $\omega$.
\begin{figure}[ht]
  \centering
  \includegraphics[scale=0.4]{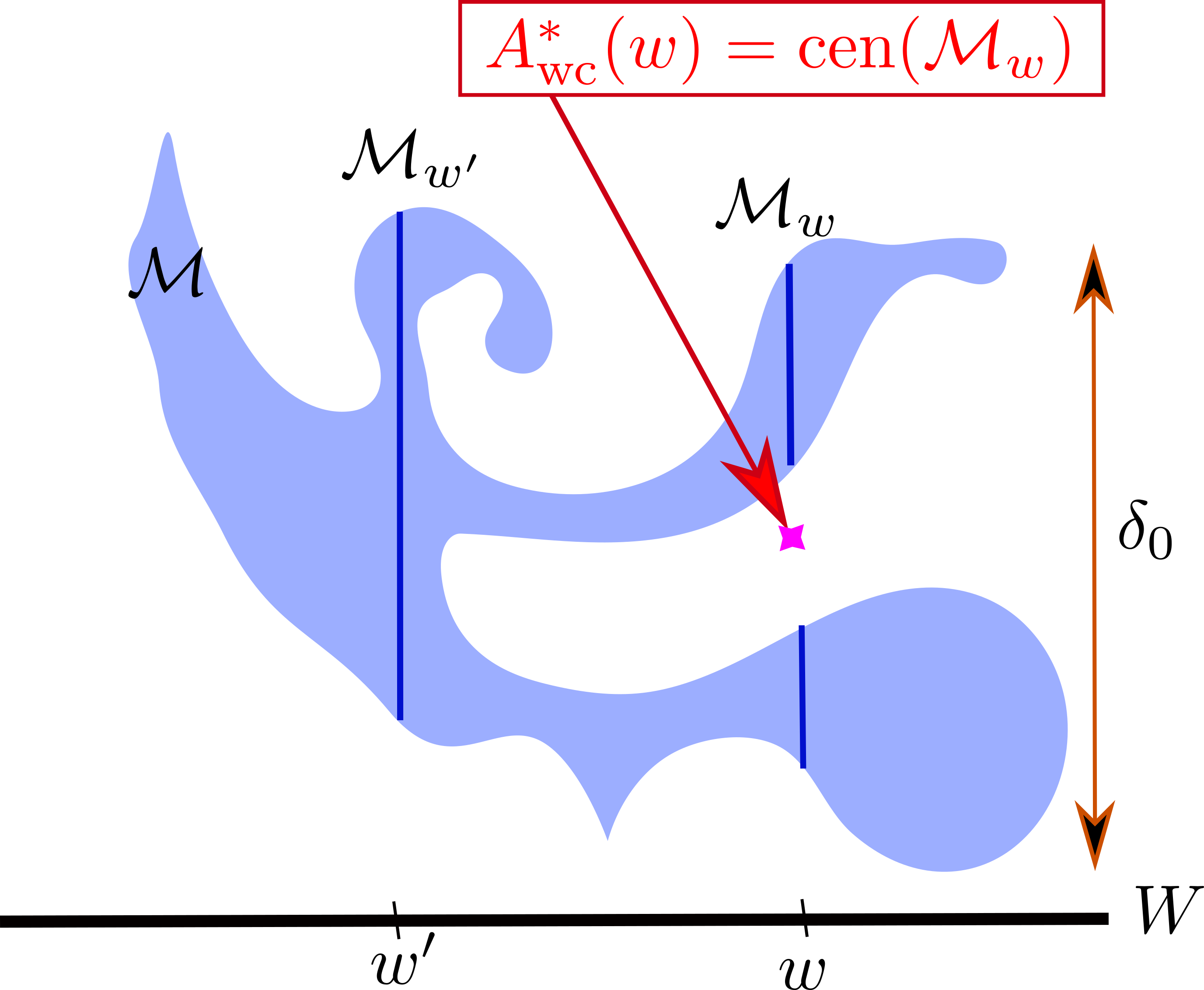}
\caption{Illustration of the optimal recovery benchmark on a manifold in the two dimensional Euclidean space. Note that sometimes $\cM_\omega$ may be a non connected set as the figure depicts. This does not alter the fact that the best reconstruction is the center of the Chebyshev ball $\cen(\cM_\omega)$. Remark that the center does not necessarily lie in $\cM_\omega$ when the set is non connected.}
\label{fig:bird}
\end{figure}

The best possible recovery map can be described through the following general notion.

\begin{definition}
The Chebychev ball of a bounded set $S\in V$ is the closed ball $B(v,r)$ of minimal radius 
that contains $S$. One denotes by $v={\rm cen}(S)$ the Chebychev center of $S$ and 
$r={\rm rad}(S)$ its Chebychev radius. 
\end{definition}

In particular 
one has
\begin{equation}
\frac 1 2 {\rm diam}(S)\leq {\rm rad}(S)\leq  {\rm diam}(S),
\label{radiam}
\end{equation}
where ${\rm diam}(S):=\sup\{\|u-v\|\,:\, u,v\in S\}$ is the diameter of $S$.
Therefore, the recovery map that minimizes the worst case error over $\cM_w$ for
any given $w$, and therefore over $\cM$ is
defined by
\begin{equation}
\label{eq:optimal-A-Cheb-center}
A_\wc^*(w)={\rm cen}(\cM_w).
\end{equation}
Its associated worst case error is
\begin{equation}
E_\wc^*(\cM)= \sup \{\rad(\cM_w)\, :\, w\in W\}.
\end{equation}
Note that the map $A^*_\wc$ is also optimal among all algorithms for each slice $\cM_w$, where $w\in P_W (\cM)$, since 
\begin{equation}
E_\wc(A^*_\wc,\cM_w)=\min_{A}E_\wc(A,\cM_w)=\rad(\cM_w), \quad \forall w\in P_W(\cM).
\end{equation}
However, there may exist other maps
$A$ such that $E_\wc(A,\cM)=E_\wc^*(\cM)$, since we 
also supremize over $w\in P_W(\cM)$.

In view of the equivalence \eqref{radiam}, we can
relate $E^*_\wc(\cM)$ to the quantity 
\begin{equation}
\label{eq:benchmark-M}
\delta_0=\delta_0(\cM,W):=\sup\{{\rm diam}(\cM_w)\,:\, w\in W\}=\sup \{\|u-v\|\; : \; u,v\in \cM, \;u-v\in W^\bot \},
\end{equation}
by the equivalence
\begin{equation}
\label{eq:equiv-optimality}
\frac 1 2 \delta_0\leq E^*_\wc(\cM) \leq \delta_0.
\end{equation}

Note that injectivity of the measurement map $P_W$ over $\cM$
is equivalent to $\delta_0=0$. More importantly, note that, in practice, the above map $A^*_\wc$ cannot be easily constructed. Since the solution manifold $\cM$ is a high-dimensional and geometrically complex object, one cannot easily find the Chebyshev center to $\cM_\omega$ for a given measurement $\omega$. One is therefore interested in designing ``sub-optimal yet good'' recovery algorithms and analyze their performance. We discuss several possible approaches in Sections \ref{sec:optimal-affine}
to \ref{sec:piecewise-affine}.



\section{Optimal Affine Algorithms}
\label{sec:optimal-affine}

\subsection{Definition and preliminary remarks}
One possibility to find easily computable surrogates for the optimal map $A^*_\wc$ is to restrict the search to linear recovery mappings $A\in \cL(W,V)$. As we are going to see, finding good linear recovery maps is connected to finding good linear subspaces to approximate the solution manifold $\cM$. The task is thus connected to {\it  reduced modeling} but we will see that there is a distinction to be made between the linear subspaces that one should use for forward problems and for inverse problems.

Generally speaking, \emph{forward} reduced modeling consists of building linear spaces $(V_n)_{n\geq 0}$ with
increasing dimension $\dim(V_n)=n$ which uniformly approximate
the solution manifold in the sense that
\begin{equation}
\dist(\cM,V_n)\coloneqq \max_{u\in \cM}\|u-P_{V_n}u\|\leq \e_n,
\label{eq:epsn}
\end{equation}
where 
\begin{equation}
\e_0\geq \e_1\geq \cdots \geq \e_n\geq \cdots \geq 0,
\end{equation}
are known tolerances. Instances of reduced models for parametrized families of PDEs
with provable accuracy are provided by polynomial approximations in the $y$ variable
\cite{CD2015acta,CDS2011} or reduced bases \cite{BMPPT2012,RHP2007}. The construction of a reduced model
is typically done offline, using a large training set of instances of $u\in\cM$
called {\it snapshots}. The offline stage potentially has a high
computational cost. Once this is done, the online cost of recovering
$u^*=A(w)$ from any data $w$ using this reduced model should 
in contrast be moderate. 

In \cite{MPPY2015}, a simple reduced-model based recovery algorithm was proposed. Assuming that we have a reduced model $V_n$, the algorithm, called Parametrized Background Data-Weak (PBDW), is defined in terms of the map
\begin{equation}
A_n(w):={\rm argmin}\{ \dist(v,V_n) \; :\; v\in \omega+W^\perp\},
\label{eq:pbdw-lin}
\end{equation}
which is well defined provided that $V_n\cap W^\perp =\{0\}$.  A necessary (but not sufficient) condition to guarantee well-posedness is to have $n\leq m$, which we will assume in the following.


\begin{figure}
     \centering
     \begin{subfigure}[b]{0.4\textwidth}
         \centering
	\includegraphics[width=\textwidth]{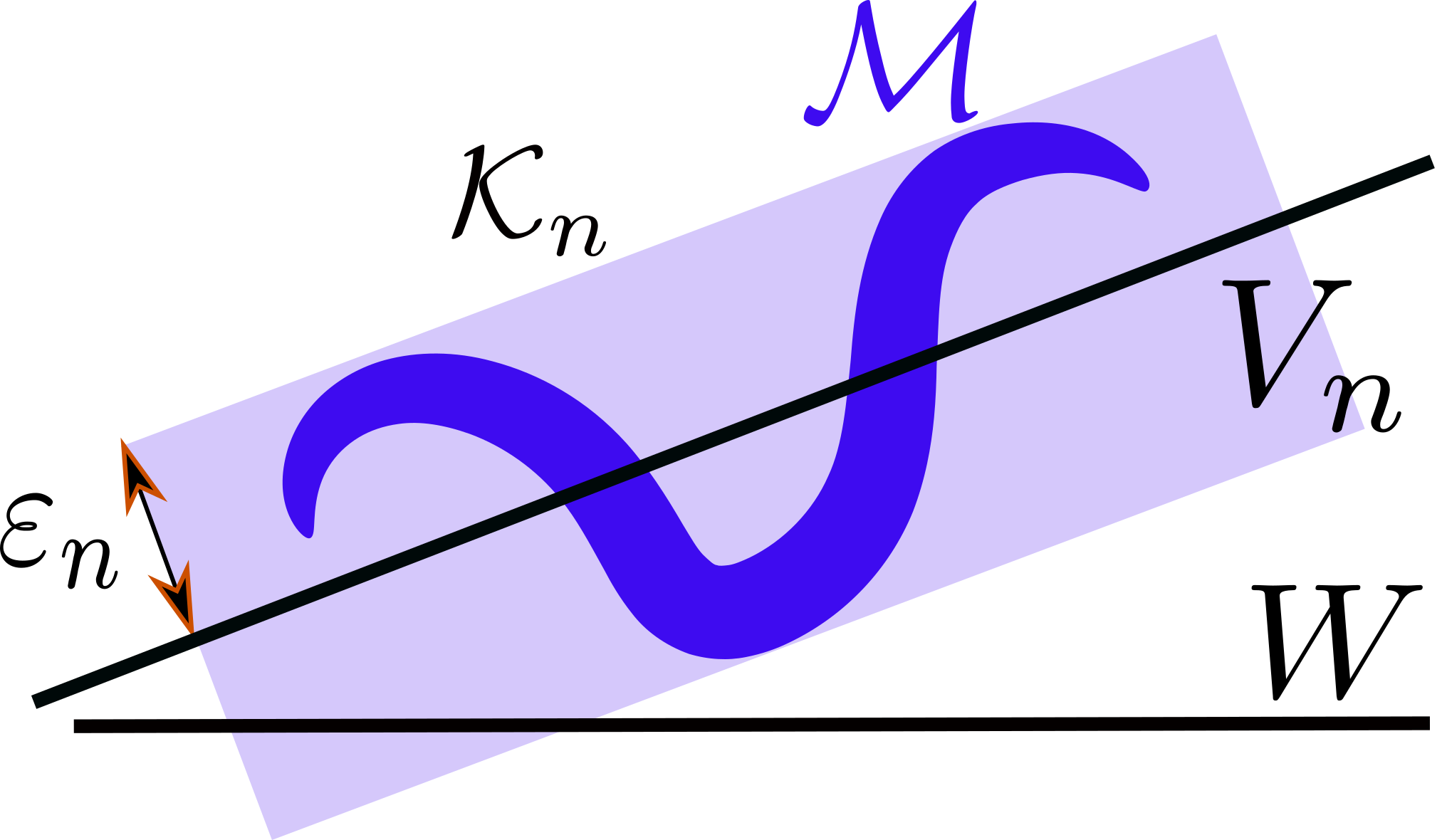}
         \caption{A manifold $\cM$, a linear space $V_n$ with accuracy $\e_n$, and the cylinder $\cK_n$.}
         \label{fig:Kn}
     \end{subfigure}
     \begin{subfigure}[b]{0.4\textwidth}
         \centering
         \includegraphics[width=\textwidth]{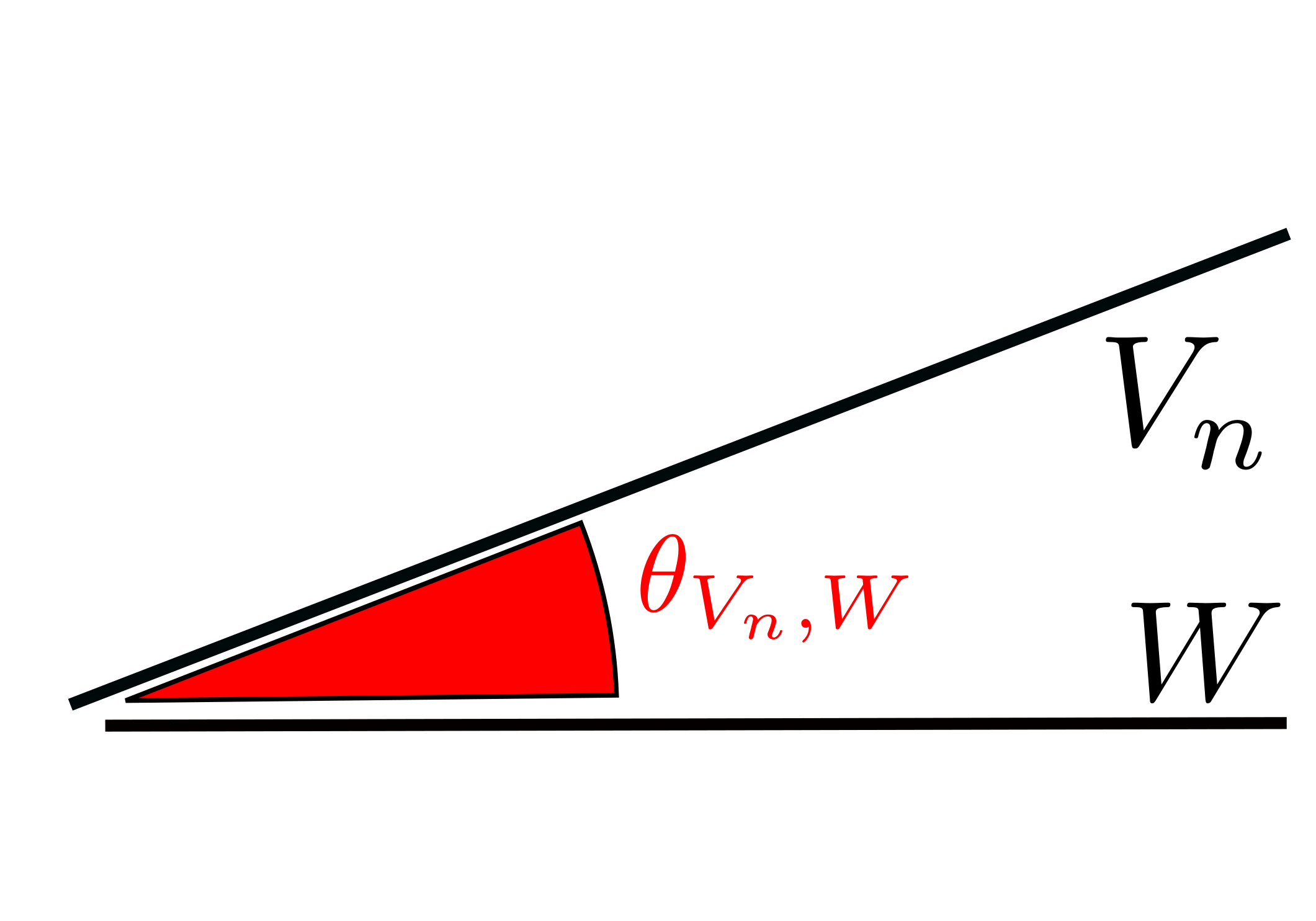}
         \caption{Angle between $V_n$ and $W$.}
         \label{fig:angle}
     \end{subfigure}
        \caption{The concepts associated to the linear reconstruction algorithm.}
        \label{fig:one-space}
\end{figure}

We can prove that $A_n$ is a linear mapping in $\cL(W, V)$ and it was shown in \cite{BCDDPW2017} that $A_n$ has a simple interpretation in terms of the cylinder (see Figure \ref{fig:Kn})
\begin{equation}
\label{eq:cylinder}
\cK_n:=\{ v\in V \; : \: {\rm dist}(v,V_n)\leq \e_n\},
\end{equation}
that contains the solution manifold $\cM$. Namely, the algorithm $A_n$ is also given by
\begin{equation}
A_n(w)=\cen(\cK_{n,w}), \quad \cK_{n,w}:= \cK_n\cap (\omega+W^\perp),
\end{equation}
and the map is shown to be optimal among all linear and nonlinear algorithms when $\cM$ is replaced by the simpler containment set $\cK_n$, that is
$$
A_n = \argmin_{A:W\to V} E_{\wc}(A, \cK_n). 
$$
The substantial advantage of this approach is that, in contrast to $A^*_\wc$, the map $A_n$ can be easily computed by solving a simple least-squares minimization problem of size $n\times m$. Appendix \ref{appendix:linear-pbdw} explains how to compute $A_n$ in practice. Note that $A_n$ depends on $V_n$ and $W$, but not on $\e_n$ in view of \eqref{eq:pbdw-lin}. This is important because $\e_n$ is only known approximately in practice.

This algorithm satisfies the performance bound (see \cite{MPPY2015, BCDDPW2017})
\begin{equation}
\label{eq:pbdw-err-bound}
\|u-A_n(P_Wu)\|\leq \mu_n {\rm dist}(u,V_n\oplus (V_n^\perp \cap W))
\leq \mu_n {\rm dist}(u,V_n) \leq \mu_n\e_n,
\end{equation}
where the last inequality holds when $u\in \cM$. Here 
\begin{equation}
\mu_n=\mu(V_n,W):=\max_{v\in V_n} \frac{\|v\|}{\|P_W v\|}
\end{equation}
is the inverse of the inf-sup constant
\begin{equation}
\label{eq:beta}
\beta_n:=\min_{v\in V_n} \max_{w\in W} \frac{\<v,w\>}{\|v\| \,\|w\|} = \min_{v\in V_n} \frac{\Vert P_W v \Vert}{\Vert v \Vert}.
\end{equation}
The quantity $\beta_n$ can be interpreted as the cosine of the angle between $V_n$ and $W$. This idea is illustrated in Figure \ref{fig:angle} through the angle denotes as $\theta_{V_n, W}$. In particular, note that $\beta_n =0$ (thus $\mu_n=\infty$) in the event where $V_n\cap W^\perp$ is non-trivial. Appendix \ref{appendix:beta} explains how to compute $\beta_n$ in practice.

An important observation for what is presented in what follows is that the PBDW algorithm 
\eqref{eq:pbdw-lin} has a simple extension to the setting where  $V_n$ is an affine space  rather than a linear space, namely,  when 
\begin{equation}
V_n^{(\aff)}=\bar u + V_n,
\label{affine}
\end{equation}
with $V_n$ a linear subspace of dimension $n$ and $\bar u$ a given offset that is known to us. In this case, denoting
$$
\bar \omega \coloneqq P_{W} \bar u,
$$
the affine version of \eqref{eq:pbdw-lin} reads
\begin{equation}
A^{(\aff)}_n(w):=\argmin\{ \dist(v,\bar u + V_n) \; :\; v\in \omega+W^\perp\},
\label{eq:pbdw-aff}
\end{equation}
which can also be written as
\begin{equation}
A^{(\aff)}_n(w) = \bar u + A_n(\omega-\bar \omega).
\end{equation}
At first sight, affine spaces may not seem to bring any significant improvement in terms of approximating the solution manifold,
due to the following observation: if $\cM$ is approximated with accuracy $\e$ by an $n$-dimensional affine space $V_n$ given by \eqref{affine},
it is also approximated with accuracy $\wt \e\leq \e$ by the $n+1$-dimensional linear space
\begin{equation}
\wt V_{n+1}:=V_n\oplus \bR \bar u.
\end{equation}
However, the choice of an affine subspace may significantly improve the performance of the algorithm \eqref{eq:pbdw-lin} in the case where the parametric solution $u(y)$ is a ``small perturbation''
of a nominal solution $\bar u=u(\bar y)$ for some $\bar y\in Y$, in the sense that 
\begin{equation}
{\rm diam}(\cM)\ll \|u\|.
\end{equation}
Indeed, suppose in addition that $\bar u$ is badly aligned
with respect to the measurement space $W$ in the sense that 
\begin{equation}
\|P_W \bar u \| \ll \|u\|.
\end{equation}
In such a case, any linear
space $V_n$ that is well tailored to approximating the solution manifold 
(for example a reduced basis space) will contain a direction
close to that of $\bar u$ and thus, we will have that $\mu_n \gg 1$, rendering the
reconstruction by the linear PBDW method much less accurate than
the approximation error by $V_n$. The use of the affine mapping
\eqref{affine} has the advantage of elimitating the bad direction $\bar u$ since
$\mu_n$ will now be computed with respect to the linear part $V_n$.


\medskip

The above algorithms $A_n$ and $A_n^{(\aff)}$ are defined in general for any subspace $V_n$. So they work with standard constructions of reduced models. These constructions are  tailored for the forward problem, and they targeted at making the spaces $V_n$ as efficient as possible for approximating $\cM$, that is, making $\dist(\cM,V_n)$ as small as possible for each given $n$. This implies that $\e_n$ is also small. For example, for the reduced basis spaces, it is known \cite{BCDDPW2011,DPW2013} that a certain greedy selection of snapshots generates spaces $V_n$ such that $\dist(\cM,V_n)$ decays at the same rate (polynomial or exponential) as the Kolmogorov $n$-width $d_n(\cM)$ (see equation \eqref{eq:kolmo}). However these constructions do not ensure the control of $\mu_n$ and therefore these reduced spaces for forward modeling may be much less efficient when using the PBDW algorithm for the inverse recovery problem.

In view of these observations, two main strategies are possible.
First, we can build affine spaces $V_n$ that are better targeted towards the 
recovery task. In other words, we want to build spaces $V_n$ to make
the recovery algorithm $A_n$ as efficient as possible given the measurement space $W$.
In fact, it was shown in \cite{CDDFMN2020} that one can find the optimal affine subspace for the state estimation problem. We summarize the main results on this front in Sections \ref{sec:optimal-affine-characterization} and \ref{sec:practical-algo-opt-aff}. A second strategy can be considered if we are allowed to select the measurement functionals $\ell_i$ from some admissible dictionary. This amounts to fixing $V_n$, and optimizing the over the space $W$. We present some strategies for sensor placement in Section \ref{sec:sensor-placement}. Section \ref{sec:GEIM} summarizes strategies to make a join selection of $V_n$ and $W_m$. A particular algorithm for this approach is the Generalized Empirical Interpolation Method, which, in the present context, can be seen as a particular way of jointly selecting $V_n$ and $W_m$ when we impose $n=m$. Finally, in Section \ref{sec:piecewise-affine} we fix the observation space $W$, and we present a reconstruction strategy that goes beyond linear and affine algorithms based on piecewise affine reconstructions.

\subsection{Characterization of Affine Algorithms}
\label{sec:optimal-affine-characterization}
In \cite{CDDFMN2020}, the authors aim to characterize the best affine subspace $V_n$ to apply the PBDW algorithm \eqref{eq:pbdw-aff}, and to develop an implementable strategy to find it. Here, we consider our measurement system to be imposed on us, and therefore $W$ is fixed.

It turns out that searching for the best affine subspace $V_n$ for the PBDW algorithm \eqref{eq:pbdw-aff} is equivalent to searching for the best affine reconstruction map $A^*_\aff:W\to V$ defined as
\begin{equation}
\label{eq:opt-aff-alg}
A^*_\aff \;\in\; \underset{ {\substack{A:W\to V \\ A \text{ affine}}}}{\argmin}\; E_{\wc}(A,\cM),
\end{equation}
where the existence of the minimum is guaranteed under very mild assumptions as we outline next. Since $A^*_\aff $ reaches best  the performance among all affine algorithms, we can write
\begin{equation}
\label{eq:opt-aff-alg-benchmark}
E^*_{\wca}(\cM) \coloneqq \underset{ {\substack{A:W\to V \\ A \text{ affine}}}}{\min}\; E_{\wc}(A,\cM).
\end{equation}
Note that we wrote $E^*_{\wca}(\cM)$ with the subindex ``wca'' to indicate that it is the optimal performance in the \underline{w}orst \underline{c}ase sense among all \underline{a}ffine maps. Obviously, $E^*_{\wca}(\cM) \geq E^*_{\wc}(\cM)$ since $E^*_{\wc}(\cM)$ is the optimal performance in the \underline{w}orst \underline{c}ase among all maps (affine and nonlinear).

We next characterize $A^*_\aff$. In order to do this, as a first observation, note that since we are given the measurement observation $w$,
any algorithm $A$ which is a candidate to optimality must satisfy $P_W(A(w)) = w$
(otherwise the reconstruction error would not be minimized). Thus a necessary condition for optimality is that $A$ should have the form
\begin{equation}
\label{opform}
A(w)=w+B(w),
\end{equation}
where $B: W\to W^\perp$  with $W^\perp$ the orthogonal complement of $W$ in $V$.
Therefore, in going further, we always require that $A$ has the above form \eqref{opform} and concentrate on the construction of good affine lifting maps $B$.

Our next observation is that any affine algorithm  $A$ of the form \eqref{opform} can always be interpreted
as a PBDW algorithm  $A_n$ for a certain  space $V_n$ with $n\le m$. 
\begin{lemma}[See \cite{CDDFMN2020}]
\label{proplin} $A$ is an affine map of the form  \eqref{opform} if and only if there exists $\bar u\in V$ and a linear subspace $V_n$ of dimension $n\le m$ such that $A$ coincides with the affine PBDW algorithm \eqref{eq:pbdw-aff} for $V_n^{(\aff)}=\bar u + V_n$.
\end{lemma}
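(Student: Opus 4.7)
The plan is to verify the two implications separately, leaning on the explicit formula $A_n^{(\aff)}(w)=\bar u+A_n(w-\bar\omega)$ from equation \eqref{eq:pbdw-aff} together with the already recorded fact that the linear PBDW $A_n$ is linear on $W$ and satisfies $A_n(w')\in w'+W^\perp$ for every $w'\in W$.

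The ``$\Leftarrow$'' direction is essentially rewriting. Assuming $A=A_n^{(\aff)}$, linearity of $A_n$ forces $A$ to be affine, and substitution yields
\[
A(w)=\bar u+A_n(w-\bar\omega)=w+\bigl(P_{W^\perp}\bar u+[A_n(w-\bar\omega)-(w-\bar\omega)]\bigr),
\]
where the bracketed quantity lies in $W^\perp$ by the basic property of $A_n$ recalled above. Hence $A$ has the form \eqref{opform} with an affine lifting $B:W\to W^\perp$, which closes this direction.

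For the ``$\Rightarrow$'' direction I would decompose the given affine map as $A(w)=w+B_0+L_B(w)$ with $B_0=B(0)\in W^\perp$ and $L_B:W\to W^\perp$ linear. The candidates I would then propose are $\bar u:=B_0$ and
\[
V_n:=\{\,w+L_B(w)\,:\,w\in W\,\},
\]
the graph of $L_B$, which is a linear subspace of $V$ of dimension $m$ (and thus satisfies $n\leq m$). A short argument shows the well-posedness condition for the PBDW on this space: if $w+L_B(w)\in W^\perp$ then its $W$-component $w$ vanishes, so $V_n\cap W^\perp=\{0\}$. Note also that $\bar u\in W^\perp$ gives $\bar\omega=0$, so $A_n^{(\aff)}(w)=B_0+A_n(w)$.

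The only step I expect to need a short argument (and the conceptual crux of the lemma) is to check that this engineered $V_n$ actually reproduces $A$ via the PBDW minimization \eqref{eq:pbdw-lin}. The key observation is that $w+L_B(w)$ lies simultaneously in $V_n$ (by definition) and in $w+W^\perp$ (since $L_B(w)\in W^\perp$), hence achieves distance zero in that minimization; by the uniqueness of the PBDW minimizer guaranteed by $V_n\cap W^\perp=\{0\}$, we conclude $A_n(w)=w+L_B(w)$, and therefore $A_n^{(\aff)}(w)=B_0+w+L_B(w)=A(w)$ for every $w\in W$. This closes the second direction and the lemma. The insight ``the right $V_n$ is the graph of the linear part of $B$'' is really where the content sits; once it is in place the verification is routine.
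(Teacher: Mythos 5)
Your proof is correct and takes essentially the same route as the argument in the cited reference (the paper itself only states the lemma and defers the proof to \cite{CDDFMN2020}): the backward direction is the routine rewriting you give, and the forward direction rests on exactly the graph construction $V_n=\{w+L_B(w)\,:\,w\in W\}$ with $\bar u=B(0)\in W^\perp$, plus the check that $V_n\cap W^\perp=\{0\}$ makes the PBDW minimizer the unique point of $V_n\cap(w+W^\perp)$. Nothing is missing.
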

In view of this result, the search for an affine reduced model $\bar u + V_n$ that
is best tailored to the recovery problem is equivalent to the search of
an optimal affine map. The next result tells us that such an optimal map always 
exists when $\cM$ is a bounded set.

\begin{theorem}
Let $\cM$ be a bounded set. Then there exists a map $A^*_\wca$ that minimizes
$E_\wc(A,\cM)$ among all affine maps $A$.
\end{theorem}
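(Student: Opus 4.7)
The plan is to apply the direct method of the calculus of variations after reducing the problem to a minimization on a parametrized Hilbert space. First, I would recall that any affine candidate for optimality must have the form $A(w) = w + B(w)$ with $B : W \to W^\perp$ affine, and observe that $E_\wc(A,\cM)$ depends only on the restriction of $A$ to the affine hull $U := \mathrm{aff}(P_W(\cM)) \subset W$. Setting $d = \dim U \leq m$, I would pick $d+1$ affinely independent points $w_0, \dots, w_d \in P_W(\cM)$ together with preimages $u_0, \dots, u_d \in \cM$ satisfying $P_W u_j = w_j$. Any admissible affine $A$, restricted to $U$, is then uniquely specified by the vectors $b_j := A(w_j) - w_j \in W^\perp$ for $j = 0, \dots, d$, and the search reduces to the Hilbert space $\cH := (W^\perp)^{d+1}$; any choice of affine extension from $U$ back to $W$ leaves $E_\wc$ unchanged.

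Next, I would express the cost functional explicitly. Writing $P_W u = \sum_j \lambda_j(u) w_j$ with $\sum_j \lambda_j(u) = 1$ for each $u \in \cM$, the linearity of $A$ on $U$ yields $A(P_W u) = P_W u + \sum_j \lambda_j(u) b_j$, hence
\begin{equation}
G(b_0,\dots,b_d) := E_\wc(A,\cM) = \sup_{u \in \cM} \Big\| P_{W^\perp} u - \sum_{j=0}^d \lambda_j(u) b_j \Big\|.
\end{equation}
For each fixed $u$, the integrand is a norm composed with an affine function of $b$, hence convex and continuous in $b$, so $G$ is convex and lower semi-continuous as a supremum of continuous convex functions. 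Specialising to $u = u_j$, for which $\lambda_i(u_j) = \delta_{ij}$, I would also obtain the coercivity estimate $G(b) \geq \|P_{W^\perp} u_j - b_j\|$ for every $j$, forcing $G(b) \to \infty$ as $\|b\|_\cH \to \infty$.

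Finally, I would conclude via the direct method: picking a minimizing sequence $(b^{(k)})_k$, coercivity gives boundedness in $\cH$, reflexivity of the Hilbert space then yields a weakly convergent subsequence $b^{(k)} \weak b^\star$, and convexity plus strong lower semi-continuity upgrades to weak sequential lower semi-continuity, so that $G(b^\star) \leq \liminf_k G(b^{(k)}) = \inf G$. The corresponding affine map $A^\star_\wca$, extended arbitrarily outside $U$ (an extension that does not affect $E_\wc$), attains the infimum among all affine algorithms. The main obstacle is the potential infinite-dimensionality of $W^\perp$, which is why I appeal to weak compactness rather than elementary Bolzano-Weierstrass; the finite-codimensional parametrization by $(b_0,\dots,b_d)$, obtained by bypassing directions of $W$ on which $A$ is unconstrained by $\cM$, is what makes the weak-compactness argument clean.
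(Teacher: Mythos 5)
Your proof is correct. Note that the lecture notes themselves state this theorem without proof, deferring to the cited reference \cite{CDDFMN2020}, so there is no in-paper argument to compare against; judged on its own, your direct-method argument is complete and sound. The two reductions you perform are exactly the right ones: first to lifting maps $A(w)=w+B(w)$ with $B:W\to W^\perp$ affine (the Pythagorean splitting the paper invokes around \eqref{opform}), and then to the values of $A$ at an affine frame $w_0,\dots,w_d$ of $\mathrm{aff}(P_W(\cM))$. The second reduction is more than cosmetic: if one naively parametrizes by $(c,B)\in W^\perp\times\cL(W,W^\perp)$ as in the paper's optimization section, the worst-case functional is \emph{not} coercive whenever $P_W(\cM)$ fails to affinely span $W$, since $B$ is then unconstrained on the complementary directions; your restriction to the affine hull removes precisely those flat directions, and the coercivity estimate $G(b)\geq \lVert P_{W^\perp}u_j-b_j\rVert$ obtained by testing at the preimages $u_j$ is what makes the minimizing sequence bounded. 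From there, convexity plus strong lower semicontinuity giving weak sequential lower semicontinuity on the Hilbert space $(W^\perp)^{d+1}$, together with weak compactness of bounded sets, closes the argument; the boundedness of $\cM$ is used only to guarantee that the infimum is finite (e.g.\ $G(0)=\sup_{u\in\cM}\lVert P_{W^\perp}u\rVert<\infty$). The final extension of the optimal affine map from $\mathrm{aff}(P_W(\cM))$ to all of $W$ indeed leaves $E_\wc$ unchanged, so nothing is lost.
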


\subsection{A practical algorithm for optimal affine recovery}
\label{sec:practical-algo-opt-aff}
\paragraph{Discretization and truncation:}
Since we are searching among algorithms of the form \eqref{opform}, we have that 
\begin{align*}
E_{\wc}(A^*_\aff ,\cM)
&= \min_{A:W\to V\, \text{affine}}\; \max_{u\in \cM} || u - A(\omega) ||  \\
&= \min_{B:W\to W^\perp\, \text{affine}}\; \max_{u\in \cM} || u - \omega - B(\omega) ||  \\
&= \min_{c\in W^\perp,\, B:W\to W^\perp\, \text{linear}}\; \max_{u\in \cM} || P_{W^\perp} u - c - B(\omega) ||.
\end{align*}
This means that the optimal affine recovery map is obtained by minimizing the
convex function
\begin{equation}
F(c,B)=\max_{u\in\cM} \|P_{W^\perp}u-c-B(P_W u)\|,
\end{equation}
over $W^\perp \times \cL(W,W^\perp)$. This optimization problem
cannot be solved exactly for two reasons:
\begin{enumerate}
\item
The sets $W^\perp$ as well as $\cL(W,W^\perp)$ are infinite dimensional
when $V$ is infinite dimensional.
\item
One single evaluation of $F(c,B)$ requires in principle to explore the entire manifold $\cM$.
\end{enumerate}

The first difficulty is solved by replacing $V$ by a subspace 
$Z_N$ of finite dimension $\dim(Z_N)=N$ that approximates the solution manifold $\cM$ with an accuracy
of smaller order than that expected for the recovery error. One possibility is
to use a finite element space $Z_N=V_h$ of sufficiently small mesh size $h$. However its resulting dimension $N=N(h)$
needed to reach the accuracy could still be quite large. An alternative is to
use reduced model spaces $Z_N$ which are more
efficient for the approximation of $\cM$.

We therefore minimize $F(c,B)$ over $\wt W^\perp \times \cL(W,\wt W^\perp)$,
where $\wt W^\perp$ is the orthogonal complement of $W$ in the space $W+Z_N$,
and obtain an affine map $\wt A_\wca$ defined by
\begin{equation}
\label{redopt}
\widetilde  A_\wca(w)=w+\bar c+\bar Bw, \quad (\bar c,\bar B):=\argmin\{\widetilde F(c,B): \ c\in \wt W^\perp, B\in\cL(W,\wt W^\perp)\}.
\end{equation}
with
\begin{equation}
\widetilde F(c,B)=\max_{u\in\widetilde \cM} \|P_{W^\perp}u-c-B(P_W u)\|.
\end{equation}
In order to compare the performance of $\widetilde  A_\wca(w)$ with that of $A^*_\wca$,
we first observe that
\begin{equation}
\label{eq:epsN}
\|P_{W^\perp}u-P_{\wt W^\perp}u\| \leq \e_N:=\sup_{u\in\cM}{\rm dist}(u,Z_N).
\end{equation}
For any $(c,B)\in W^\perp \times \cL(W,W^\perp)$, we define
$(\wt c,\wt B)\in \wt W^\perp \times \cL(W,\wt W^\perp)$ by $\wt c=P_{\wt W^\perp}c$
and $\wt B=P_{\wt W^\perp}\circ B$. Then, for any $u\in \cM$,
$$
\begin{array}{ll}
\|P_{W^\perp}u-\wt c-\wt B(P_W u)\| &\leq \|P_{\wt W^\perp}(P_{W^\perp}u-c-B(P_W u))\|
+\|P_{W^\perp}u-P_{\wt W^\perp}u\| \\
&\leq  \|P_{W^\perp}u-c-BP_W u\|+\e_N.
\end{array}
$$
It follows that we have the framing
\begin{equation}
E(A^*_\wca,\cM)\leq E(\wt A_\wca,\cM)\leq E(A_\wca^*,\cM)+\e_N,
\label{frame}
\end{equation}
which shows that the loss in the recovery error is at most of the order $\e_N$.

To understand how large $N$ should be, let us observe that a recovery map
$A$ of the form \eqref{opform} takes its values in the
linear space
\begin{equation}
F_{m+1}=\bR c \oplus {\rm range}(B),
\end{equation}
which has dimension $m+1$. It follows that
the recovery error is always larger than 
the approximation error by such a space. Therefore
\begin{equation}
E_{\wc}(A^*_\wca,\cM) \geq d_{m+1}(\cM),
\end{equation}
where $d_{m+1}(\cM)$ is the Kolmogorov $n$-width defined by \eqref{eq:kolmo} for $n=m+1$.
Therefore, if we could use the space $Z_n$ that exactly achieves
the infimum in \eqref{eq:kolmo}, we would be ensured that, with $N=m+1$, the 
additional error $\e_N=\delta_{m+1}(\cM)$ in \eqref{frame} is of smaller order than 
$E_{\wc}(A^*_\wca,\cM)$. As a result we would obtain the framing
\begin{equation}
E(A^*_\wca,\cM)\leq E(\wt A_\wca,\cM)\leq 2E(A_\wca^*,\cM).
\label{frame2}
\end{equation}
 In practice, since we do not have access to 
the $n$-width spaces, we use instead the reduced basis spaces $Z_n:=V_n$
which are expected to have comparable approximation performances in view of the results
from \cite{BCDDPW2011,DPW2013}.

The second difficulty is solved by replacing the set $\cM$ 
in the supremum that defines $F(c,B)$ by a discrete training set $\wt \cM$,
which corresponds to a discretization $\wt Y$ of the parameter domain $Y$, that is
\begin{equation}
\wt \cM:=\{u(y)\, : \, y\in\wt Y\},
\end{equation}
with finite cardinality.

We therefore minimize over $\wt W^\perp \times \cL(W,\wt W^\perp)$ the function
\begin{equation}
\wt F(c,B)=\sup_{u\in\wt\cM} \|P_{W^\perp}u-c-BP_W u\|,
\end{equation}
which is computable. The additional error resulting from this discretization can 
be controlled from the resolution of the discretization. Namely, let $\e>0$ be the smallest
value such that $\wt \cM$ is
an $\e$-approximation net of $\cM$, that is, $\cM$ is covered by the balls $B(u,\e)$
for $u\in \wt\cM$. Then, we find that
\begin{equation}
\wt F(c,B)\leq F(c,B)\leq \wt F(c,B)+\e \|B\|_{\cL(W,\wt W^\perp)},
\end{equation}
which shows that the additional recovery error will be of the order of $\e$
amplified by the norm of the linear part of the optimal recovery map.

One difficulty is that the cardinality of $\e$-approximation nets becomes potentially
untractable for small $\e$ as the parameter dimension becomes large, due to
the curse of dimensionality. This difficulty also occurs in forward problems when
constructing reduced basis by a greedy selection process which also needs
to be performed in sufficiently dense discretized sets. 
Recent results obtained in \cite{CDDN2020} show 
that, in certain relevant instances, $\e$-approximation nets can
be replaced by random training sets of smaller cardinality. One interesting
direction for further research is to apply similar ideas to the present context of inverse state estimation.

\paragraph{Optimization algorithms:}
\label{sec:optim}
As already brought up, the practical computation of $\wt A_{\wc}$ consists in solving
\begin{equation}
\label{eq:opt-hilbert}
\min_{(c,B) \in \wt W^\perp \times \cL(W,\wt W^\perp)} \underbrace{\max_{u\in\wt\cM} \|P_{W^\perp}u-c-BP_W u\|^2}_{= \widetilde F(c, B)},
\end{equation}
The numerical solution of this problem is challenging due to its lack of smoothness (the objective function $\widetilde F$ is convex but non differentiable) and its high dimensionality (for a given target accuracy $\e_N$, the cardinality of $\wt \cM$ might be large). One could use classical subgradient methods, which are simple to implement.
However these schemes only guarantee a very slow $O(k^{-1/2})$ convergence rate of the objective function, where $k$ is the number of iterations. As illustrated in \cite{CDDFMN2020}, this approach does not give satisfactory results: due to the slow convergence, the solution update of one iteration falls below machine precision before approaching the minimum close enough. This motivates the use of a primal-dual splitting method which is known to ensure a $O(1/k)$ convergence rate on the partial duality gap. We next briefly describe this method. 

We assume without loss of generality that $\dim(W+V_N) = m+N$ and that $\dim \wt W^\perp = N$. Let $\{\psi_i\}_{i=1}^{m+N}$ be an orthonormal basis of $W+V_N$ such that $W={\rm span}\{\psi_1,\dots,\psi_m\}$. Since for any $u\in V$,
$$
P_{W+V_N} u = \sum_{i=1}^{m+N} u_i \psi_i,
$$
the components of $u$ in $W$ can be given in terms of the vector $\bw= ( u_{i} )_{i=1}^{m}$ and the ones in $\wt W^\perp$ with $\bu = ( u_{i+m} )_{i=1}^{N}$. 

We now consider the finite training set
\begin{equation}
\wt\cM:=\{u^1,\dots,u^J\}, \quad J:=\#(\wt\cM)<\infty,
\end{equation}
and denote by $\bw^j$ and $\bu^{j}$ the vectors
associated to the snapshot functions $u^j$ for $j=1,\dots,J$. 
One may express the problem \eqref{eq:opt-hilbert} as the search for
\begin{equation}
\label{eq:opt-Rn}
\min_{ \substack{(\bbR, \bb) \in \\ \bR^{N\times m}\times \bR^{N}}} \max_{j=1,\dots,J} \Vert \bu^{j} - \bR \bw^{j} - \bb \Vert^2_{2}.
\end{equation}
Concatenating the matrix and vector variables $(\bbR, \bb)$ into a single 
$\bx \in \bR^{m(N+1)}$, we rewrite the above problem as
\begin{equation}
\label{eq:minP}
\min_{\bx \in \bR^{m(N+1)}} \max_{j=1,\dots,J} f_j(\bbQ_j\bx),
\end{equation}
where $\bbQ_j\in \bR^{N\times m(N+1)}$ is a sparse matrix built using the coefficients of $\bw^j$ and 
$f_j(\by) := \Vert \bu^{j} - \by \Vert^2_{2}$.

The key observation to build our algorithm is that problem~\eqref{eq:minP} can be equivalently written as a minimization problem on the epigraphs, i.e.,
\begin{equation}
\begin{aligned}
	&\min_{(\bx,t) \in \bR^{m (N+1)} \times \bR^+} t \quad \text{subject to} \quad f_j(\bbQ_j \bx) \leq t , \quad j=1,\dots,J \\
\iff&\min_{(\bx,t) \in \bR^{m (N+1)} \times \bR^+} t \quad \text{subject to} \quad (\bbQ_j \bx,t) \in \epi_{f_j}, \quad j=1,\dots,J,
\end{aligned}
\end{equation}
or, in a more compact (and implicit) form,
\begin{equation}\label{eq:minPepi}\tag{$\mathrm{P_{epi}}$}
\min_{(\bx,t) \in \bR^{m (N+1)} \times \bR^+} t + \sum_{j=1}^J \iota_{\epi_{f_j}}\pa{\bbQ_j\bx,t} .
\end{equation}
where, for any non-empty set $S$ the indicator function $\iota_S$ has value $0$ on $S$ and $+\infty$ on $S^c$.
 
This problem takes the following canonical expression, which is amenable to a primal-dual proximal splitting algorithm
\begin{equation}\label{eq:primal-problem}
\min_{(\bx,t) \in \bR^{m (N+1)} \times \bR} G(\bx,t) + F \circ L(\bx,t).
\end{equation}
Here, $G$ is the projection map for the second variable
\begin{equation}
G(\bx,t) = t,
\end{equation} 
the linear operator $L$ is defined by
\begin{equation}
L (\bx,t):=\pa{(\bbQ_1 \bx,t),(\bbQ_2 \bx,t),\cdots,(\bbQ_J \bx,t)}
\end{equation}
and acts from $\bR^{m (N+1)} \times \bR$ to $\times_{j=1}^J (\bR^N \times \bR)$
and the function $F$ acting from $\times_{j=1}^J (\bR^N \times \bR)$ to $\bR$ is defined by
\begin{equation}
F\( (\bv_1,t_1),\cdots,(\bv_J,t_J)\):= \sum_{j=1}^J \iota_{\epi_{f_j}}\pa{\bv_j,t_j}.
\end{equation}
Note that $F$ is the indicator function of the cartesian product of epigraphs.

Before introducing the primal-dual algorithm, some remarks are in order:
\begin{enumerate}
\item We recall that if $\phi$ is a proper closed convex function on $\bR^d$, its proximal mapping $\prox_\phi$ is defined by
\begin{equation}
\prox_\phi(y)={\rm argmin}_{\bR^d}\(\phi(x)+\frac 1 2\|x-y\|_2^2\).
\end{equation}
\item The adjoint operator $L^*$ is given by
\begin{equation}
\label{eq:Lstar}
L^*\( (\bv_1,t_1),\cdots,(\bv_J,t_J)\):= \pa{\sum_{j=1}^J \bbQ_j^T \bv_j,\sum_{j=1}^J t_j} .
\end{equation}
It can be easily shown that the operator norm of $L$ satisfies $\norm{L}^2 \leq J + \sum_{j=1}^J \norm{\bbQ_j}^2$.
\item Both $G$ and $F$ are simple functions in the sense that their proximal mappings, $\prox_{G}$ and $\prox_{F}$, can be computed in closed form.
\end{enumerate}

The iterations of the primal-dual splitting method read for $k\geq 0$,
\begin{equation}
\label{eq:its-pd}
\begin{aligned}
(\bx,t)^{k+1} &= \prox_{\gamma_G G} \( (\bx, t)^k - \gamma_G L^* \( \( (\bv_1,\xi_1),\dots, (\bv_J,\xi_J) \)^k\) \) , \\
(\bar\bx,\bar t)^{k+1}  &= (\bx,t)^{k+1} + \theta \( (\bx,t)^{k+1} - (\bx,t)^k \) , \\
\( (\bv_1,\xi_1),\dots, (\bv_J,\xi_J)\)^{k+1} &= 
\prox_{\gamma_F \hat F}  
\(  \((\bv_1,\xi_1),\dots, (\bv_J,\xi_J)\)^k 
+ \gamma_F L (\bar\bx,\bar t)^{k+1} \), 
\end{aligned}
\end{equation}
where $\hat F$ is the Fenchel-Legendre transform of $F$, $\gamma_G > 0$ and $\gamma_F > 0$ are such that $\gamma_G\gamma_F < 1/\norm{L}^2$, and $\theta \in [-1,+\infty[$ (it is generally set to $\theta=1$ as in \cite{CP2011}).

\paragraph{Final remark about the primal-dual algorithm:} Note that the proposed approach computes directly the optimal affine mapping rather than computing the optimal subspace $V_n^{(\aff)}=\bar u + V_n$. This subspace is thus determined implicitly in view of Lemma \ref{proplin}, and we do not have any information about its dimension except that $1\leq \dim V_n \leq m$.

\section{Sensor placement}
\label{sec:sensor-placement}

In section \ref{sec:optimal-affine} we have summarized a strategy to find an optimal affine reconstruction algorithm $A^*_\aff$ for a given observation space $W$. This algorithm is connected to an optimal affine subspace $V_n^\opt$ to use in the PBDW method although we note that our procedure  does not yield an explicit characterization of $V_n^\opt$ and a further post-processing would be necessary to find it in practice. In \cite{BCMN2018}, we have considered the ``reciprocal'' problem, namely, for a given reduced model space $V_n$ with a good accuracy $\e_n$,   the question is how to guarantee a good reconstruction accuracy with a number of measurements $m\geq n$ as small possible. In view of the error bound \eqref{eq:pbdw-err-bound}, one natural objective is to guarantee that $\mu(V_n,W_m)$ is maintained of moderate size. Note that  taking $W_m=V_n$ would automatically give the minimal value $\mu(V_n,W_m)=1$ with $m=n$. However, in a typical data acquisition
scenario, the measurements that span the basis of $W_m$ are chosen from within a limited class. This is the 
case for example when placing $m$ pointwise sensors at various locations within the physical domain $\Omega$.

We model this restriction by asking that the $\ell_i$ are picked within
a {\it dictionary} $\cD$ of $V'$, that is a set of linear functionals
normalized according to 
$$
\|\ell\|_{V'}=1, \quad \ell\in \cD,
$$
which is {\em complete} in the sense that $\ell(v)=0$ for all $\ell\in \cD$ implies that $v=0$.
With an abuse of notation, we identify $\cD$ with
the subset of $V$ that consists of all
Riesz representers $\omega$ of the above linear functionals $\ell$.
With such an identification, $\cD$ is a set of functions
normalized according to 
$$
\|\omega\|=1, \quad \omega\in \cD,
$$
such that the finite linear combinations of elements of $\cD$ are
dense in $V$. Our task is therefore to pick $\{\omega_1,\dots,\omega_m\}\in \cD$ in such a way that 
\begin{equation}
\beta(V_n,W_m)\geq \beta^*>0,
\label{betastar}
\end{equation}
for some prescribed $0<\beta^*< 1$, with $m$ larger than $n$ but as small as possible. In particular, we may introduce
\begin{equation}
m^*=m^*(\beta^*,\cD,V_n),
\end{equation}
the minimal value of $m$ such that there exists  $\{\omega_1,\dots,\omega_m\}\in\cD$
satisfying \eqref{betastar}.

In \cite{BCMN2018} the authors show two ``extreme'' results:
\begin{itemize}
\item
For any $V_n$ and $\cD$, there exists $\beta^*>0$ such that $m^*=n$, that is,
the inf-sup condition \eqref{betastar} holds with the minimal possible number of measurements.
However this $\beta^*$ could be arbitrarily close to $0$.
\item
For any prescribed $\beta^*>0$ and any model space $V_n$,
there are instances of dictionaries $\cD$ such that $m^*$
is arbitrarily large.
\end{itemize}
The two above statements illustrate that the range of situations that can arise is very broad in full generality if one does not add extra assumptions on the nature of $V_n$ or on the nature of the dictionary $\cD$. This  motivates to analyse more concrete instances as we present next.

It is possible to study certain relevant dictionaries 
for the particular space $V=H^1_0(\Omega)$, with inner product and norms
 \begin{equation}
 \< u, v \> \coloneqq \int_\Omega  \nabla u(x) \cdot \nabla v(x) \, \mathrm{d}x
 \quad {\rm and} \quad \|u\|\coloneqq\|\nabla u\|_{L^2(\Omega)}.
 \label{vnorm}
 \end{equation}
The considered dictionaries
model local sensors, either as point evaluations or as local averages.
In the first case,
$$
\cD = \{ \ell_x = \delta_x \;:\; \forall x\in \Omega\},
$$
which requires that $V$ is a reproducing kernel Hilbert space (RKHS)
of functions defined on $\Omega$, that is a Hilbert space that continuously embeds in $\cC(\Omega)$.
Examples of such spaces are the Sobolev spaces $H^s(\Omega)$ for $s>d/2$, possibly
with additional boundary conditions. In the case of local averages, the linear functionals are of the form
\begin{equation}
\ell_{x,\tau}(u)=\int_\Omega u(y) \vp_\tau(y-x) dy,
\label{locav}
 \end{equation}
where 
\begin{equation}
\vp_\tau(y)\coloneqq\tau^{-d} \vp\(\frac y \tau\),
 \end{equation}
for some fixed radial function $\vp$ compactly supported in
the unit ball $B=\{|x|\leq 1\}$ of $\bR^d$ and such that  $\int \vp=1$, and $\tau>0$ representing
the point spread. The dictionary in this case is
$$
\cD = \{ \ell_{x, \tau} \;:\; \forall x\in \Omega\}.
$$
We could even consider an interval of values for $\tau$ in $[\tau_{\min}, \tau_{\max}]$ with $0< \tau_{\min}\leq \tau_{\max}$,
$$
\cD = \{ \ell_{x, \tau} \;:\; \forall (x, \tau) \in \Omega\times [\tau_{\min}, \tau_{\max}]\}.
$$
For the above cases of dictionaries, we
provide upper estimates of $m^*$ in the case of spaces $V_n$ that satisfy
some inverse estimates, such as finite element or trigonometric polynomial spaces. In \cite{BCMN2018}, the optimal value $m^*$ is proved to be of the same order as $n$
when the sensors are uniformly spaced.

This a-priori analysis is not possible for more general spaces $V$. It is not possible either for subspaces $V_n$ such as
reduced basis spaces, which are preferred to finite element spaces for model order reduction
because the approximation error $\e_n$ of the manifold $\cM$ defined in \eqref{eq:epsn} is expected to decay much faster in elliptic and parabolic problems. For such general spaces,
we need a strategy to select the measurements. In practice, $V$ is 
of finite but very large dimension and $\cD$ is of finite but very large cardinality
\begin{equation}
M\coloneqq\#(\cD)>\!\!>1.
\end{equation}
For this reason, the exhaustive search of the set $\{\omega_1,\dots,\omega_m\}\subset \cD$ maximizing
$\beta(V_n,W_m)$ for a given $m>1$ is out of reach. One natural alternative is to rely on greedy algorithms
where the $\omega_j$ are picked incrementally.

The starting point to the design of such algorithms is the observation that
\eqref{betastar} is equivalent to having
\begin{equation}
\sigma_m=\sigma(V_n,W_m)\coloneqq\sup_{v\in V_n, \|v\|=1} \|v-P_{W_m}v\|\leq \sigma^*,\quad \sigma^*\coloneqq\sqrt{1-(\beta^*)^2}<1.
\label{sigmam}
\end{equation}
Therefore, our objective is to construct a space $W_m$ spanned by $m$ elements from $\cD$ 
that captures all unit norm vectors of $V_n$ with the prescribed accuracy $\sigma^*<1$. This leads
us to study and analyze algorithms which may be thought as generalization to the well-studied
orthogonal matching pursuit algorithm (OMP), equivalent to the algorithms we study here when applied to 
the case $n=1$ with a unit norm vector $\phi_1$ that generates $V_1$. We refer to \cite{DT1996, TG2007, BCDD2008, Temlyakov2011} for some references on classical results on greedy algorithms and the OMP strategy.

In \cite{BCMN2018}, the authors propose and analyzed two algorithms which are summarized in Sections \ref{sec:collective-omp} and \ref{sec:worst-case-omp}. In Section \ref{sec:point-eval} the case of pointwise evaluations is discussed. The main result which is shown is that both algorithms always converge, ensuring that \eqref{betastar}
holds for $m$ sufficiently large, and we also give conditions on $\cD$ that allow us
to a-priori estimate the minimal value of $m$ where this happens.
The main observation stemming from numerical experiments is the ability of the greedy algorithms to pick good points. In particular, in the case of dictionaries of point evaluations or local averages, we observe that  the selection performed by the greedy algorithms is near optimal in simple 1D cases in the sense that it achieves \eqref{betastar} after a number of iterations which is proportional to $n$ and which can be predicted in theory.

Before finishing this section, let us outline the main differences and points of contact between the present approach and existing
works in the literature. The problem of optimal placement of sensors, which corresponds
to the particular setting where the linear functionals are point evaluations or local averages,
has been extensively studied since the 1970's in control and systems theory.
In this context, the state function to be estimated is the realization of a Gaussian stochastic process,
typically obtained as the solution of a linear PDE with a white noise forcing term. The error is then measured
in the mean square sense \eqref{eq:err-ms}, rather than in the worst case performance sense \eqref{eq:err-wc}
which is the point of view adopted in our work. The function to be minimized
by the sensors locations is then the trace of the error covariance, while we target at minimizing the inverse inf-sup
constant $\mu(V_n,W)$. See in particular \cite{Bensoussan1972} where the existence and characterization
of the optimal sensor location is established in this stochastic setting. Continuous optimization algorithms have been
proposed for computing the optimal sensor location, see e.g. \cite{AGI1975,CK1971,YS1973}.
One common feature with the present approach is that the criterion to be minimized by the optimal location is non-convex,
which leads to potential difficulties when the number of sensors is large. This is the main motivation for introducing a
greedy selection algorithm, which in addition allows us to consider more general dictionaries.

\subsection{A collective OMP algorithm}
\label{sec:collective-omp}

In this section we discuss a first numerical algorithm for the incremental selection of the
spaces $W_m$, inspired by the orthonormal matching pursuit (OMP) algorithm
which is recalled below. More precisely, our algorithm may be viewed as applying 
the OMP algorithm for the collective approximation of the elements of
an orthonormal basis of $V_n$ by linear combinations of $m$ members of the dictionary.

Our objective is to reach a bound \eqref{sigmam} for the quantity $\sigma_m$. Note that this
quantity can also be written as
$$
\sigma_m=\|(I-P_{W_m})|_{V_n} \|_{\cL(V_n,V)},
$$
that is, $\sigma_m$ is the spectral norm of $I-P_{W_m}$ restricted to $V_n$.

\paragraph{Description of the algorithm:}
When $n=1$, there is only one unit vector $\phi_1\in V_1$ up to a sign change. A commonly
used strategy for approximating $\phi_1$ by a small combination of elements from $\cD$ is 
to apply a greedy algorithm, the most prominent one being the orthogonal matching pursuit (OMP):
we iteratively select 
\begin{equation} \label{c-omp_iter}
\omega_k=\argmax_{\omega\in \cD}|\<\omega,\phi_1-P_{W_{k-1}} \phi_1\>|,
\end{equation}
where $W_{k-1}\coloneqq{\rm span}\{\omega_1,\dots,\omega_{k-1}\}$ and $W_0\coloneqq\{0\}$. In practice,
one often relaxes the above maximization, by taking $\omega_k$ such that
\begin{equation} \label{c-omp_iter_kappa}
|\<\omega_k,\phi_1-P_{W_{k-1}} \phi_1\>| \geq \kappa \max_{\omega\in \cD}|\<\omega,\phi_1-P_{W_{k-1}} \phi_1\>|,
\end{equation}
for some fixed $0<\kappa<1$, for example $\kappa=\frac 1 2$. This is known as the weak OMP algorithm,
but we refer to it as OMP as well.
It has been studied in \cite{BCDD2008,DT1996}, see also \cite{Temlyakov2011} for a complete survey on greedy approximation.

For a general value of $n$, one natural strategy is to define our greedy algorithm as follows: we iteratively select 
\begin{equation}
\omega_k=\argmax_{\omega\in \cD}\max_{v\in V_n, \|v\|=1}|\<\omega,v-P_{W_{k-1}} v\>|
=\argmax_{\omega\in \cD}\|P_{V_n}(\omega-P_{W_{k-1}} \omega)\|.
\label{nomp}
\end{equation}
Note that in the case $n=1$, we obtain the original OMP algorithm applied to $\phi_1$.

As to the implementation of this algorithm, we take $(\phi_1,\dots,\phi_n)$ to be
any orthonormal basis of $V_n$. Then
$$
\|P_{V_n}(\omega-P_{W_{k-1}} \omega)\|^2=\sum_{i=1}^n |\<\omega-P_{W_{k-1}} \omega, \phi_i\>|^2
=\sum_{i=1}^n |\<\phi_i-P_{W_{k-1}} \phi_i, \omega\>|^2
$$
Therefore, at every step $k$, we have
$$
\omega_k=\argmax_{\omega\in \cD}\sum_{i=1}^n |\<\phi_i-P_{W_{k-1}} \phi_i, \omega\>|^2,
$$
which amounts to a stepwise optimization of a similar nature as in
the standard OMP. Note that, while the basis $(\phi_1,\dots,\phi_n)$ is used for the implementation, 
the actual definition of the greedy selection algorithm is independent of the 
choice of this basis in view of \eqref{nomp}. It only involves 
$V_n$ and the dictionary $\cD$. Similar to OMP, we may weaken the algorithm by taking
$\omega_k$ such that
$$
\sum_{i=1}^n |\<\phi_i-P_{W_{k-1}} \phi_i, \omega_k\>|^2
\geq \kappa^2 \max_{\omega\in \cD}\sum_{i=1}^n |\<\phi_i-P_{W_{k-1}} \phi_i, \omega\>|^2,
$$
for some fixed $0<\kappa<1$.

For such a basis, we introduce the residual quantity
$$
r_m\coloneqq\sum_{i=1}^n  \|\phi_i-P_{W_m}\phi_i\|^2.
$$
This quantity allows us to control the validity of \eqref{betastar} since we have
$$
\sigma_m=\sup_{v\in V_n, \|v\|=1} \|v-P_{W_m}v\|=
\sup_{\sum_{i=1}^n c_i^2=1}\Big\| \sum_{i=1}^n c_i (\phi_i-P_{W_m}\phi_i)\Big\|
\leq r_m^{1/2},
$$
and therefore \eqref{betastar} holds provided that $r_m\leq \sigma^2=1-\gamma^2$.


\paragraph{Convergence analysis:}
By analogy to the analysis of OMP provided in \cite{DT1996}, we
introduce for any $\Psi=(\psi_1,\dots,\psi_n)\in V^n$ the quantity
$$
\|\Psi\|_{\ell^1(\cD)}\coloneqq\inf_{c_{\omega,i}} \Big \{\sum_{\omega\in\cD} \(\sum_{i=1}^n |c_{\omega,i}|^2\)^{1/2}\; : \; 
\psi_i=\sum_{\omega\in\cD} c_{\omega,i}\omega, \quad i=1,\dots, n\Big\},
$$
or equivalently, denoting $c_\omega\coloneqq\{c_{\omega,i}\}_{i=1}^n$,
$$
\|\Psi\|_{\ell^1(\cD)}\coloneqq\inf_{c_\omega} \Big \{\sum_{\omega\in\cD} \|c_{\omega}\|_2 \; : \; 
\Psi=\sum_{\omega\in\cD} c_\omega \omega \Big\}.
$$
This quantity is a norm on the subspace of $V^n$ on which it is finite.

Given that $\Phi=(\phi_1,\dots,\phi_n)$ is any orthonormal basis of $V_n$, we
write
$$
J(V_n)\coloneqq\|\Phi\|_{\ell^1(\cD)}.
$$
This quantity is indeed independent on the orthonormal basis $\Phi$:
if $\tilde \Phi=(\tilde \phi_1,\dots,\tilde \phi_n)$ is another orthonormal basis, we have $\tilde \Phi=U\Phi$ where
$U$ is unitary. Therefore any representation $\Phi=\sum_{\omega\in\cD} c_\omega \omega$
induces the representation 
$$
\tilde \Phi=\sum_{\omega\in\cD} \tilde c_\omega \omega, \quad \tilde c_\omega=Uc_\omega,
$$
with the equality
$$
\sum_{\omega\in\cD} \|\tilde c_{\omega}\|_2=\sum_{\omega\in\cD} \|c_{\omega}\|_2,
$$
so that $\|\Phi\|_{\ell^1(\cD)}=\|\tilde \Phi\|_{\ell^1(\cD)}$.

One important observation is that if 
$\Phi=(\phi_1,\dots,\phi_n)$ is an orthonormal basis of $V_n$
and if $\Phi=\sum_{\omega\in\cD} c_\omega \omega$, one has
$$
n=\sum_{i=1}^n \|\phi_i\| \leq \sum_{i=1}^n \sum_{\omega\in\cD} |c_{\omega,i}|
=\sum_{\omega\in\cD} \|c_{\omega}\|_1
\leq \sum_{\omega\in\cD} n^{1/2} \|c_\omega\|_2.
$$
Therefore, we always have
$$
J(V_n)\geq n^{1/2}.
$$
Using the quantity $J(V_n)$,
we can generalize the result of \cite{DT1996} on the OMP algorithm in the following way.

\begin{theorem}
\label{theo1}
Assuming that $J(V_n)<\infty$, the collective OMP algorithm satisfies
\begin{equation}
r_m
\leq \frac {J(V_n)^2}{\kappa^2} (m+1)^{-1}, \quad m\geq 0.
\label{rate-1}
\end{equation}
\end{theorem}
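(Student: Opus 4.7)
The plan is to mirror the classical convergence analysis of the weak OMP from \cite{DT1996}, adapted to the collective setting. Writing $\psi_i^{(k)}\coloneqq \phi_i-P_{W_k}\phi_i$ so that $r_k=\sum_{i=1}^n \|\psi_i^{(k)}\|^2$, and (assuming $r_{k-1}>0$, otherwise the bound is trivial from that point on) letting $\tilde \omega_k\coloneqq \frac{\omega_k-P_{W_{k-1}}\omega_k}{\|\omega_k-P_{W_{k-1}}\omega_k\|}$ so that $P_{W_k}=P_{W_{k-1}}+\<\cdot,\tilde\omega_k\>\tilde\omega_k$, the proof splits into three steps: a per-iteration descent inequality, a lower bound on the greedy progress in terms of $J(V_n)$, and a standard recursion.

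First I would derive the descent inequality. The one-step Pythagorean identity gives $\|\psi_i^{(k)}\|^2=\|\psi_i^{(k-1)}\|^2-|\<\psi_i^{(k-1)},\tilde\omega_k\>|^2$. Since $\psi_i^{(k-1)}\in W_{k-1}^\perp$, one has $\<\psi_i^{(k-1)},\omega_k\>=\|\omega_k-P_{W_{k-1)}}\omega_k\|\cdot\<\psi_i^{(k-1)},\tilde\omega_k\>$, and the normalization $\|\omega_k\|=1$ forces $\|\omega_k-P_{W_{k-1}}\omega_k\|\leq 1$. Therefore $|\<\psi_i^{(k-1)},\tilde\omega_k\>|\geq |\<\psi_i^{(k-1)},\omega_k\>|$, and summing over $i$ yields
\[
r_k\leq r_{k-1}-\sum_{i=1}^n |\<\psi_i^{(k-1)},\omega_k\>|^2.
\]
The weak selection rule then bounds the subtracted quantity from below by $\kappa^2 \sup_{\omega\in\cD}\sum_{i=1}^n |\<\psi_i^{(k-1)},\omega\>|^2$.

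The key (and most delicate) step is to bound this supremum from below using $J(V_n)$. For any representation $\phi_i=\sum_{\omega\in\cD} c_{\omega,i}\omega$, the orthogonality $\psi_i^{(k-1)}\perp P_{W_{k-1}}\phi_i$ allows me to write
\[
r_{k-1}=\sum_{i=1}^n \<\psi_i^{(k-1)},\phi_i\>=\sum_{\omega\in\cD}\sum_{i=1}^n c_{\omega,i}\<\psi_i^{(k-1)},\omega\>.
\]
Applying Cauchy-Schwarz in the inner sum of $n$ terms (the natural collective analog of the classical scalar pairing) and factoring out the supremum over $\omega$ gives
\[
r_{k-1}\leq \Bigl(\sup_{\omega\in\cD}\Bigl(\sum_{i=1}^n |\<\psi_i^{(k-1)},\omega\>|^2\Bigr)^{1/2}\Bigr)\sum_{\omega\in\cD}\|c_\omega\|_2.
\]
Taking the infimum over representations replaces the rightmost sum by $J(V_n)$, and squaring delivers the bound $\sup_{\omega\in\cD}\sum_{i=1}^n|\<\psi_i^{(k-1)},\omega\>|^2\geq r_{k-1}^2/J(V_n)^2$.

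Combining the three steps gives the recursion $r_k\leq r_{k-1}-\tfrac{\kappa^2}{J(V_n)^2}r_{k-1}^2$. Setting $C\coloneqq J(V_n)^2/\kappa^2$ and using $r_{k-1}\geq r_k\geq 0$, I get
\[
\frac{1}{r_k}-\frac{1}{r_{k-1}}=\frac{r_{k-1}-r_k}{r_{k-1}r_k}\geq \frac{r_{k-1}/C}{r_k}\geq \frac{1}{C}.
\]
Since $r_0=n$ and $J(V_n)^2\geq n$ (shown immediately before the theorem) together with $\kappa\leq 1$ give $1/r_0\geq 1/C$, iterating produces $1/r_m\geq (m+1)/C$, i.e.\ $r_m\leq J(V_n)^2/(\kappa^2(m+1))$. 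The main obstacle is the third step: the pairing must be done $\ell^2$ in $i$ and atom-by-atom in $\omega$ before extracting the supremum, otherwise one obtains only a weaker bound with an extra factor of $n$.
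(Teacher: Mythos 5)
Your proof is correct and follows exactly the intended route: the lecture notes state this result as the generalization of the DeVore--Temlyakov weak-OMP analysis, and your three steps (one-step Pythagorean descent with $\|\omega_k-P_{W_{k-1}}\omega_k\|\leq 1$, the duality bound $\sup_{\omega\in\cD}\sum_i|\langle\psi_i^{(k-1)},\omega\rangle|^2\geq r_{k-1}^2/J(V_n)^2$ via the $\ell^2$-in-$i$, atom-by-atom pairing, and the standard $1/r_k$ recursion seeded by $r_0=n\leq J(V_n)^2/\kappa^2$) reproduce precisely the argument of the cited reference. Your closing remark is also on point: pairing collectively in $\ell^2$ over $i$ is exactly what avoids the extra $n^2$ factor that appears in the worst-case variant (Theorem \ref{theo3}).
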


\begin{remark}
Note that the right side of \eqref{rate-1}, is always larger than $n(m+1)^{-1}$, which 
is consistent with the fact that $\beta(V_n,W_m)=0$ if $m<n$.
\end{remark}

One natural strategy for selecting the measurement space $W_m$
is therefore to apply the above described greedy algorithm, until the
first value $\tilde m=\tilde m(n)$ is met such that $\beta(V_n,W_m)\geq \gamma$.
According to \eqref{rate-1}, this value satisfies
\begin{equation}
\label{m-collectiveOMP}
m(n)\leq \frac {J(V_n)^2}{\kappa^2 \sigma^2}.
\end{equation}
For a general dictionary $\cD$ and space $V_n$
we have no control on the quantity $J(V_n)$ which could even be infinite, and therefore 
the above result does not guarantee that the above selection strategy 
eventually meets the target bound $\beta(V_n,W_m)\geq \gamma$.
In order to treat this case, we establish a perturbation result
similar to that obtained in \cite{BCDD2008} for the standard OMP algorithm.

\begin{theorem}
\label{theo2}
Let $\Phi=(\phi_1,\dots,\phi_n)$ be an orthonormal basis of $V_n$
and $\Psi=(\psi_1,\dots,\psi_n)\in V^n$ be arbitrary. Then the application
of the collective OMP algorithm on the space $V_n$ gives 
\begin{equation}
r_m \leq 4 \frac {\|\Psi\|_{\ell^1(\cD)}^2}{\kappa^2}(m+1)^{-1}+\|\Phi-\Psi\|^2, \quad m\geq 1.
\label{ratepert-th2}
\end{equation}
where $\|\Phi-\Psi\|^2\coloneqq\|\Phi-\Psi\|^2_{V^n}=\sum_{i=1}^n \|\phi_i-\psi_i\|^2$.
\end{theorem}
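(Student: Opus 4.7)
I would adapt the energy-dissipation argument of Theorem~\ref{theo1} while carrying an additive error term coming from $E := \|\Phi-\Psi\|$. Let $R_{k-1} := I - P_{W_{k-1}}$; since $R_{k-1}$ is an orthogonal projector, $r_{k-1} = \sum_{i=1}^n \<R_{k-1}\phi_i,\phi_i\>$, and splitting $\phi_i = \psi_i + (\phi_i-\psi_i)$ gives two terms. Cauchy--Schwarz in $V^n$ controls the perturbation piece by $r_{k-1}^{1/2} E$. For the main piece I would insert an arbitrary representation $\psi_i = \sum_\omega c_{\omega,i}\omega$, swap summations, and apply Cauchy--Schwarz in $i$ to obtain
\begin{equation}
\sum_{i=1}^n \<R_{k-1}\phi_i,\psi_i\> \;\leq\; \sum_{\omega\in\cD} \|c_\omega\|_2 \pa{\sum_i |\<R_{k-1}\phi_i,\omega\>|^2}^{1/2} \;\leq\; \pa{\sum_\omega \|c_\omega\|_2}\, M_k,
\end{equation}
where $M_k := \max_{\omega\in\cD}(\sum_i|\<R_{k-1}\phi_i,\omega\>|^2)^{1/2}$; taking the infimum over representations replaces the prefactor by $\|\Psi\|_{\ell^1(\cD)}$.

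Next, the weak greedy rule \eqref{nomp} gives $\sum_i |\<R_{k-1}\phi_i,\omega_k\>|^2 \geq \kappa^2 M_k^2$, while a direct computation with the rank-one orthogonal projector $P_{W_k}-P_{W_{k-1}}$ onto $\mathrm{span}(R_{k-1}\omega_k)$, together with $\|R_{k-1}\omega_k\|\leq 1$, yields $r_{k-1}-r_k \geq \sum_i |\<R_{k-1}\phi_i,\omega_k\>|^2$. Chaining these with the previous bound produces the master inequality $r_{k-1} \leq \tfrac{\|\Psi\|_{\ell^1(\cD)}}{\kappa}(r_{k-1}-r_k)^{1/2} + r_{k-1}^{1/2} E$. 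To linearise in $r_{k-1}$, I apply AM--GM, $r_{k-1}^{1/2} E \leq \tfrac12(r_{k-1}+E^2)$, and set $s_k := r_k - E^2$. Rearranging and squaring yields the clean scalar recursion
\begin{equation}
s_{k-1}-s_k \;\geq\; c\, s_{k-1}^2, \qquad c := \frac{\kappa^2}{4\|\Psi\|_{\ell^1(\cD)}^2},
\end{equation}
valid whenever $s_{k-1}>0$ (if $s_{k-1}\leq 0$ the conclusion is already in hand since $r_m\leq r_{k-1}\leq E^2$).

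To close, I would split on $s_0 = n-E^2$. If $s_0 > 1/c$, the recursion at $k=1$ forces $s_1 \leq s_0(1-cs_0)<0$, hence $r_m\leq r_1\leq E^2$ for all $m\geq 1$, which is stronger than the claim. Otherwise $s_0 \leq 1/c$, and inverting the recursion via $\tfrac{1}{s_k}-\tfrac{1}{s_{k-1}} = \tfrac{s_{k-1}-s_k}{s_{k-1}s_k} \geq c\tfrac{s_{k-1}}{s_k} \geq c$ (valid as long as $s_m>0$, by monotonicity of $r_k$) telescopes from $k=1$ to $m$ to give $1/s_m \geq 1/s_0 + mc \geq (m+1)c$; thus $s_m \leq \frac{4\|\Psi\|_{\ell^1(\cD)}^2}{\kappa^2(m+1)}$, which is exactly the desired bound. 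The delicate point, and the reason the two devices (AM--GM plus the case split on $s_0$) are needed, is to keep the perturbation contribution purely additive and to earn the factor $(m+1)^{-1}$ rather than the weaker $m^{-1}$ that a naive telescoping would yield.
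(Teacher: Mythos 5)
Your proof is correct and follows essentially the same route as the argument in the cited source \cite{BCMN2018} (itself patterned on the perturbation analysis of weak OMP in \cite{BCDD2008}): the splitting $\phi_i=\psi_i+(\phi_i-\psi_i)$, the decrement bound $r_{k-1}-r_k\geq\sum_i|\langle \phi_i-P_{W_{k-1}}\phi_i,\omega_k\rangle|^2$ from the rank-one update with $\|\omega_k\|=1$, and the inversion of the quadratic recursion for $s_k=r_k-\|\Phi-\Psi\|^2$ are exactly the standard ingredients and yield the stated constant. The case split on $s_0$ and the observation that $s_k\leq 0$ already implies the claim correctly handle the degenerate situations, so no gap remains.
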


As an immediate consequence of the above result, we obtain 
that the collective OMP converges for any space $V_n$, even when $J(V_n)$
is not finite.

The next corollary shows that if $\gamma>0$, one has
$\beta(V_n,W_m)\geq \gamma$ for $m$ large enough.

\begin{corollary}
\label{corconv}
For any $n$ dimensional space $V_n$, the application
of the collective OMP algorithm on the space $V_n$ gives that $\lim_{m\to +\infty} r_m=0$. 
\end{corollary}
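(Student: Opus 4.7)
The plan is to combine the perturbation bound of Theorem \ref{theo2} with the completeness hypothesis on the dictionary $\cD$. I would first record that the sequence $(r_m)_{m \geq 0}$ is non-increasing: since the greedy selection picks $\omega_m$ with $\|P_{V_n}(\omega_m - P_{W_{m-1}}\omega_m)\| > 0$ as long as $V_n \not\subset W_{m-1}$, one has $W_{m-1} \subsetneq W_m$, and consequently $\|\phi_i - P_{W_m}\phi_i\| \leq \|\phi_i - P_{W_{m-1}}\phi_i\|$ for every $i$. Hence it suffices to exhibit, for arbitrary $\varepsilon > 0$, some index $m_0$ with $r_{m_0} \leq \varepsilon$.

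The central step is to use the density of $\vspan(\cD)$ in $V$, which is guaranteed by the completeness of $\cD$. Fix an orthonormal basis $\Phi = (\phi_1, \dots, \phi_n)$ of $V_n$ and $\varepsilon > 0$. For each $i = 1, \dots, n$, choose a finite linear combination
\begin{equation}
\psi_i = \sum_{\omega \in \cD_i} c_{\omega, i}\,\omega, \qquad \#\cD_i < \infty,
\end{equation}
such that $\|\phi_i - \psi_i\|^2 \leq \varepsilon/(2n)$. Setting $\Psi = (\psi_1, \dots, \psi_n)$, this yields $\|\Phi - \Psi\|^2 \leq \varepsilon/2$. Moreover $\Psi$ has finite $\ell^1(\cD)$ norm, since it is a finite dictionary expansion; denote
\begin{equation}
C_\varepsilon := \|\Psi\|_{\ell^1(\cD)} < \infty.
\end{equation}

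Applying Theorem \ref{theo2} with this particular $\Psi$ gives, for every $m \geq 1$,
\begin{equation}
r_m \;\leq\; \frac{4 C_\varepsilon^2}{\kappa^2(m+1)} + \|\Phi - \Psi\|^2 \;\leq\; \frac{4 C_\varepsilon^2}{\kappa^2(m+1)} + \frac{\varepsilon}{2}.
\end{equation}
Choosing $m_0 = m_0(\varepsilon)$ so that $4 C_\varepsilon^2/(\kappa^2(m_0+1)) \leq \varepsilon/2$ yields $r_{m_0} \leq \varepsilon$, and then $r_m \leq \varepsilon$ for all $m \geq m_0$ by the monotonicity observed at the outset. Since $\varepsilon > 0$ was arbitrary and $r_m \geq 0$, this proves $\lim_{m \to \infty} r_m = 0$.

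The only delicate point is the construction of $\Psi$: one needs the completeness hypothesis on $\cD$ (in the Hilbert-space sense that finite linear combinations are dense in $V$) precisely to ensure that $\Phi$ can be approximated arbitrarily well in $V^n$ by a \emph{finite} dictionary expansion, whose $\ell^1(\cD)$ norm is then automatically finite. This is exactly the regime in which Theorem \ref{theo2} upgrades the quantitative rate of Theorem \ref{theo1} into a qualitative convergence statement without any assumption on $J(V_n)$.
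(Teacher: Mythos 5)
Your proof is correct and follows the same route the paper intends: the corollary is derived from Theorem~\ref{theo2} by using the completeness of $\cD$ (density of finite linear combinations in $V$) to choose a finite dictionary expansion $\Psi$ with $\|\Phi-\Psi\|^2\leq\varepsilon/2$ and automatically finite $\|\Psi\|_{\ell^1(\cD)}$, then letting $m\to\infty$. The monotonicity observation is harmless but not needed, since the bound from Theorem~\ref{theo2} already holds for every $m\geq m_0$.
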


\subsection{A worst case OMP algorithm}
\label{sec:worst-case-omp}
We present in this section a variant of the previous collective OMP algorithm  first tested in \cite{MPPY2015}, and then analyzed in \cite{BCMN2018}.
In numerical experiments this variant performs 
better than the collective OMP algorithm,
however its analysis is more delicate. In particular we do not 
obtain convergence bounds that are as good.

\paragraph{Description of the algorithm:}

We first take 
\begin{equation} \label{wc-omp_wk_iter}
v_k\coloneqq{\rm argmax}\Big\{\|v-P_{W_{k-1}} v\| \, : \, v\in V_n, \, \|v\|=1\Big\},
\end{equation}
the vector in the unit ball of $V_n$ that is less well captured by $W_{k-1}$
and then define $\omega_{k}$ by applying one step of OMP to this vector,
that is 
\begin{equation} \label{wc-omp_iter_kappa}
| \<v_k-P_{W_{k-1}} v_k,\omega_{k}\>| \geq \kappa {\max}\Big\{| \<v_k-P_{W_{k-1}} v_k,\omega\>| \, :\, \omega\in \cD\Big\},
\end{equation}
for some fixed $0<\kappa<1$.


\paragraph{Convergence analysis:}

The first result
gives a convergence rate of $r_m$ under the assumption
that  $J(V_n)<\infty$, similar to Theorem \ref{theo1}, however
with a multiplicative constant that is inflated by $n^2$.

\begin{theorem}
\label{theo3}
Assuming that $J(V_n)<\infty$, the worst case OMP algorithm satisfies
\begin{equation}
r_m
\leq \frac {n^2J(V_n)^2}{\kappa^2} (m+1)^{-1}, \quad m\geq 0.
\label{rateworse1}
\end{equation}
\end{theorem}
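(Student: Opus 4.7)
The plan is to track the decrease of $r_k$ per iteration and to derive a recursive inequality whose solution gives the $(m+1)^{-1}$ rate with the stated constant. Write $R_k := v_k - P_{W_{k-1}}v_k$, so that $\|R_k\| = \sigma_{k-1}$ by the worst-case selection rule \eqref{wc-omp_wk_iter}. The first step is a ``mass lower bound'' for $R_k$ against the dictionary, analogous to the one used in the proof of Theorem \ref{theo1}. Expanding $v_k = \sum_{i=1}^n a_i\phi_i$ with $\|a\|_2 = 1$ in an orthonormal basis of $V_n$, and choosing a near-optimal representation $\phi_i = \sum_{\omega\in\cD} c_{\omega,i}\omega$ for the $\ell^1(\cD)$-cost defining $J(V_n)$, the identity $\<R_k,v_k\> = \|R_k\|^2 = \sigma_{k-1}^2$ (which follows from $R_k\perp W_{k-1}$) combined with one application of Cauchy--Schwarz yields
\[
\sigma_{k-1}^2 \leq \max_{\omega\in\cD}|\<R_k,\omega\>|\sum_{\omega\in\cD}|\<a,c_\omega\>| \leq \max_{\omega\in\cD}|\<R_k,\omega\>|\, J(V_n).
\]
Combined with the weak-greedy rule \eqref{wc-omp_iter_kappa}, this gives $|\<R_k,\omega_k\>|^2 \geq \kappa^2\sigma_{k-1}^4/J(V_n)^2$.

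The second step links this to a lower bound on $r_{k-1} - r_k$. Decomposing $\omega_k = \omega_k^\parallel + \omega_k^\perp$ with respect to $W_{k-1}$ and using the fact that $\phi_i - P_{W_{k-1}}\phi_i \perp W_{k-1}$, the rank-one update of the projection gives
\[
r_{k-1} - r_k \;=\; \frac{1}{\|\omega_k^\perp\|^2}\sum_{i=1}^n |\<\phi_i - P_{W_{k-1}}\phi_i,\omega_k\>|^2 \;\geq\; \sum_{i=1}^n |\<\phi_i - P_{W_{k-1}}\phi_i,\omega_k\>|^2.
\]
Since $R_k = \sum_i a_i(\phi_i - P_{W_{k-1}}\phi_i)$ with $\|a\|_2 = 1$, a further Cauchy--Schwarz yields $\sum_i|\<\phi_i-P_{W_{k-1}}\phi_i,\omega_k\>|^2 \geq |\<R_k,\omega_k\>|^2$, hence
\[
r_{k-1} - r_k \;\geq\; \frac{\kappa^2}{J(V_n)^2}\, \sigma_{k-1}^4.
\]

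The third step closes the recursion. Since $\sigma_{k-1}^2$ is the squared operator norm of $T := (I-P_{W_{k-1}})|_{V_n}$ while $r_{k-1}$ is its squared Hilbert--Schmidt norm, one has $r_{k-1} \leq n\,\sigma_{k-1}^2$, so $\sigma_{k-1}^4 \geq r_{k-1}^2/n^2$ and
\[
r_{k-1} - r_k \;\geq\; \frac{\kappa^2}{n^2 J(V_n)^2}\, r_{k-1}^2.
\]
Setting $c := \kappa^2/(n^2 J(V_n)^2)$, the standard telescoping argument $1/r_k - 1/r_{k-1} \geq c$, together with $r_0 = n$ and the bound $J(V_n)^2 \geq n$ established in the excerpt (which ensures $1/r_0 = 1/n \geq c$), yields $1/r_m \geq c(m+1)$, i.e.\ the advertised estimate.

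The main obstacle, compared with the analysis leading to Theorem \ref{theo1}, is precisely the conversion $\sigma_{k-1}^4 \geq r_{k-1}^2/n^2$: the worst-case algorithm only attacks the least captured direction at each step, so the guaranteed decrease is driven by $\sigma_{k-1}^2$ rather than by the collective quantity $r_{k-1}$. This unavoidable relaxation produces the $n^2$ factor. Sharpening it would require exploiting how the worst direction interacts with the entire residual basis, which would essentially reintroduce the collective selection rule of Theorem \ref{theo1}.
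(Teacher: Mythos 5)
Your proof is correct, and it follows the same route as the argument behind this result (the paper itself defers the proof to \cite{BCMN2018}): the chain $r_{k-1}-r_k\geq |\langle R_k,\omega_k\rangle|^2\geq \kappa^2\sigma_{k-1}^4/J(V_n)^2$ obtained from the rank-one projection update and the $\ell^1(\cD)$ duality bound $\sigma_{k-1}^2=\langle R_k,v_k\rangle\leq J(V_n)\max_{\omega\in\cD}|\langle R_k,\omega\rangle|$, followed by the spectral-vs-Hilbert--Schmidt comparison $\sigma_{k-1}^2\geq r_{k-1}/n$ and the standard telescoping of $1/r_k$, is exactly how the $n^2$-inflated constant arises there. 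Your closing remark correctly identifies the conversion $\sigma_{k-1}^4\geq r_{k-1}^2/n^2$ as the sole source of the extra factor $n^2$ relative to Theorem~\ref{theo1}.
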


For the general case, we establish a perturbation result
similar to Theorem \ref{theo2}, with again a multiplicative constant 
that depends on the dimension of $V_n$.

\begin{theorem}
\label{theo4}
Let $\Phi=(\phi_1,\dots,\phi_n)$ be an orthonormal basis of $V_n$
and $\Psi=(\psi_1,\dots,\psi_n)\in V^n$ be arbitrary. Then the application
of the worst case OMP algorithm on the space $V_n$ gives 
\begin{equation}
r_m \leq 4 \frac {n^2\|\Psi\|_{\ell^1(\cD)}^2}{\kappa^2}(m+1)^{-1}+n^2\|\Phi-\Psi\|^2, \quad m\geq 1.
\label{ratepert-th4}
\end{equation}
where $\|\Phi-\Psi\|^2\coloneqq\|\Phi-\Psi\|^2_{V^n}=\sum_{i=1}^n \|\phi_i-\psi_i\|^2$.
\end{theorem}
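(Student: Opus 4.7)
The plan is to follow the template of the proof of Theorem~\ref{theo2} for the collective OMP, adapted to the worst-case selection rule \eqref{wc-omp_wk_iter}--\eqref{wc-omp_iter_kappa}. The two essential differences are that (i) the progress at step $k$ is driven by the single direction $v_k \in V_n$ rather than by the full basis $\Phi$, and (ii) in order to relate this ``one-direction'' progress back to the collective residual $r_{k-1}$, one must pass through the spectral quantity $\sigma_{k-1}^2 = \|v_k - P_{W_{k-1}} v_k\|^2$ and use the elementary bound $\sigma_{k-1}^2 \geq r_{k-1}/n$. These two ingredients are precisely what produce the additional factors of $n^2$ compared with Theorem~\ref{theo2}.

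The first step is a one-step progress inequality. Using the orthogonality identity already exploited for the collective OMP, together with the weak greedy rule \eqref{wc-omp_iter_kappa} applied to $g_k := v_k - P_{W_{k-1}} v_k$, and Cauchy--Schwarz on the expansion $v_k = \sum_{i=1}^n c_{k,i} \phi_i$ (with $\sum_i c_{k,i}^2 = 1$), I would derive
\begin{equation}
r_{k-1} - r_k \;\geq\; \sum_{i=1}^n |\langle \phi_i - P_{W_{k-1}} \phi_i, \omega_k\rangle|^2 \;\geq\; |\langle g_k, \omega_k\rangle|^2 \;\geq\; \kappa^2 \max_{\omega \in \cD} |\langle g_k, \omega\rangle|^2 .
\end{equation}

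The second step is a Barron-type lower bound on $\max_{\omega \in \cD} |\langle g_k, \omega\rangle|$. Given a near-optimal representation $\Psi = \sum_{\omega \in \cD} c_\omega \omega$ with $\sum_\omega \|c_\omega\|_2$ close to $\|\Psi\|_{\ell^1(\cD)}$, I would introduce $\tilde v_k := \sum_i c_{k,i} \psi_i = \sum_\omega d_\omega \omega$ and observe that $\sum_\omega |d_\omega| \leq \|\Psi\|_{\ell^1(\cD)}$ and $\|v_k - \tilde v_k\| \leq \|\Phi - \Psi\|$, both by Cauchy--Schwarz on the coefficient vector $c_{k,\cdot}$. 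Combining the identity $\langle g_k, v_k\rangle = \|g_k\|^2 = \sigma_{k-1}^2$ with the decomposition $v_k = \tilde v_k + (v_k - \tilde v_k)$ then yields
\begin{equation}
\sigma_{k-1}^2 \;\leq\; \|\Psi\|_{\ell^1(\cD)} \max_{\omega \in \cD} |\langle g_k, \omega\rangle| \;+\; \sigma_{k-1} \|\Phi - \Psi\|,
\end{equation}
which, inserted into the progress inequality and combined with $\sigma_{k-1}^2 \geq r_{k-1}/n$, produces a recursive bound of the form $r_{k-1} - r_k \geq \kappa^2 (\sigma_{k-1}^2 - \sigma_{k-1}\|\Phi - \Psi\|)_+^2 / \|\Psi\|_{\ell^1(\cD)}^2$.

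The third step is a scalar analysis. Splitting on the threshold $\sigma_{k-1} \geq 2\|\Phi - \Psi\|$ isolates a ``progress regime'' in which $r_{k-1} - r_k \gtrsim \kappa^2 r_{k-1}^2 / (n^2 \|\Psi\|_{\ell^1(\cD)}^2)$ (yielding the $(m+1)^{-1}$ rate via the standard comparison lemma for scalar sequences satisfying $a_{k-1} - a_k \geq c a_{k-1}^2$), from a ``perturbation regime'' in which $r_{k-1} \leq n \sigma_{k-1}^2$ is directly bounded by a constant multiple of $n \|\Phi - \Psi\|^2$. I expect the main technical obstacle to be the bookkeeping needed to glue the two regimes: once $r_k$ drops into the perturbation regime, one must verify that it remains controlled by a multiple of $n^2 \|\Phi - \Psi\|^2$ for all subsequent iterations, and the tracking of the $n$-factors through this patching must be done carefully to recover the constants stated in \eqref{ratepert-th4}. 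This is the direct analogue of the argument deployed in Theorem~\ref{theo2} following the scheme of \cite{BCDD2008}, with the extra $n$-factors stemming from the worst-case nature of the algorithm.
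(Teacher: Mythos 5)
Your proposal is correct and follows essentially the route the paper intends (the notes defer the proof to \cite{BCMN2018}, describing it precisely as the perturbation argument of Theorem~\ref{theo2} with constants inflated by $n^2$): the chain $r_{k-1}-r_k\ge\sum_{i=1}^n|\langle\phi_i-P_{W_{k-1}}\phi_i,\omega_k\rangle|^2\ge|\langle g_k,\omega_k\rangle|^2\ge\kappa^2\max_{\omega\in\cD}|\langle g_k,\omega\rangle|^2$, the Barron-type bound obtained from $\langle g_k,v_k\rangle=\sigma_{k-1}^2$, and the trace inequality $r_{k-1}\le n\,\sigma_{k-1}^2$ are exactly the ingredients that generate the two factors of $n^2$. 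The only simplification worth noting is that the two-regime gluing you flag as the main obstacle can be avoided entirely: applying Young's inequality $\sigma_{k-1}\|\Phi-\Psi\|\le\tfrac12\sigma_{k-1}^2+\tfrac12\|\Phi-\Psi\|^2$ turns your second-step inequality into $a_{k-1}-a_k\ge\kappa^2 a_{k-1}^2/(4n^2\|\Psi\|_{\ell^1(\cD)}^2)$ for the shifted sequence $a_k\coloneqq r_k-n\|\Phi-\Psi\|^2$ whenever $a_{k-1}>0$, and the standard scalar decay lemma then yields \eqref{ratepert-th4} in one stroke (in fact with the slightly better perturbation term $n\|\Phi-\Psi\|^2$).
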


By the exact same arguments as in the previous algorithm, we find that
that the worst case OMP converges for any space $V_n$, even when $J(V_n)$
is not finite.

\begin{corollary}
\label{corconvworse}
For any $n$ dimensional space $V_n$, the application
of the worst case OMP algorithm on the space $V_n$ gives that $\lim_{m\to +\infty} r_m=0$. 
\end{corollary}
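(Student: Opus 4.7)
The plan is to mimic exactly the argument that yields Corollary~\ref{corconv} from Theorem~\ref{theo2}, substituting Theorem~\ref{theo4} for Theorem~\ref{theo2}. The only extra cost will be the appearance of the $n^2$ factors, but since $n$ is fixed this is harmless for a qualitative convergence statement.

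First, I would fix any orthonormal basis $\Phi=(\phi_1,\dots,\phi_n)$ of $V_n$ and let $\varepsilon>0$ be arbitrary. The key input is the completeness assumption on the dictionary $\cD$: finite linear combinations of elements of $\cD$ are dense in $V$. Applying this to each $\phi_i$ separately, I can find, for each $i=1,\dots,n$, a finite combination $\psi_i=\sum_{\omega\in\cD_i}c_{\omega,i}\,\omega$, where $\cD_i\subset\cD$ is finite, such that $\|\phi_i-\psi_i\|^2\leq \varepsilon/(2n^3)$. Setting $\Psi=(\psi_1,\dots,\psi_n)$, this gives the two properties I need: (i) $\|\Phi-\Psi\|^2=\sum_{i=1}^n\|\phi_i-\psi_i\|^2\leq \varepsilon/(2n^2)$ and (ii) $\|\Psi\|_{\ell^1(\cD)}<\infty$, since the defining infimum is attained by a finite sum of the chosen coefficients.

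Next I plug this $\Psi$ into the perturbation bound of Theorem~\ref{theo4}, obtaining, for every $m\geq 1$,
\begin{equation*}
r_m \;\leq\; \frac{4\,n^2\,\|\Psi\|_{\ell^1(\cD)}^2}{\kappa^2}\,(m+1)^{-1} \;+\; n^2\,\|\Phi-\Psi\|^2 \;\leq\; \frac{C_\varepsilon}{m+1}+\frac{\varepsilon}{2},
\end{equation*}
with $C_\varepsilon=4n^2\|\Psi\|_{\ell^1(\cD)}^2/\kappa^2<\infty$. Choosing $m$ large enough that $C_\varepsilon/(m+1)\leq \varepsilon/2$ yields $r_m\leq \varepsilon$. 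Since $\varepsilon$ was arbitrary and $r_m$ is a nonnegative sequence, this proves $\lim_{m\to\infty}r_m=0$.

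There is really no main obstacle: the entire argument is an application of Theorem~\ref{theo4} together with the density/completeness of $\cD$ in $V$. The only point to be slightly careful about is that one must not try to approximate all of $V_n$ at once with a single representation whose $\ell^1(\cD)$-norm is uniformly bounded (which would require $J(V_n)<\infty$); instead, the perturbation bound allows us to take a different approximant $\Psi$ for each $\varepsilon$, trading a larger $\|\Psi\|_{\ell^1(\cD)}$ (absorbed by the $1/(m+1)$ factor for large $m$) against a smaller residual $\|\Phi-\Psi\|^2$. This decoupling is exactly what makes the proof go through without any assumption on $J(V_n)$.
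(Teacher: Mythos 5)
Your proof is correct and follows exactly the route the paper intends: Corollary~\ref{corconvworse} is obtained from the perturbation bound of Theorem~\ref{theo4} by choosing, for each $\varepsilon>0$, a finite-combination approximant $\Psi$ of the orthonormal basis $\Phi$ (which exists by the completeness/density of $\cD$), so that $\|\Psi\|_{\ell^1(\cD)}<\infty$ while $n^2\|\Phi-\Psi\|^2\leq\varepsilon/2$, and then letting $m\to\infty$. This mirrors the derivation of Corollary~\ref{corconv} from Theorem~\ref{theo2}, with the fixed factor $n^2$ harmless as you note, so there is nothing to add.
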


\subsection{Application to point evaluation}
\label{sec:point-eval}

As a simple example, we consider a bounded univariate interval $\Omega = I$  and take
$V=H^1_0(I)$ which is continuously embedded in 
$\cC(I)$. Without loss of generality we take $I=]0,1[$.  For every $x\in ]0,1[$, 
the Riesz representer of $\delta_{x}$ is given by the solution of $\omega''=\delta_x$ with 
zero boundary condition. Normalising this solution $\omega$ it with respect to the $V$ norm, we obtain
\begin{equation}
\label{eval_rep_H1}
\omega_{x}(t) =
\begin{cases}
\frac{t(1-x)}{\sqrt{x(1-x)}},\quad \text{for } t\leq x \\
\frac{(1-t)x}{\sqrt{x(1-x)}},\quad \text{for } t> x.
\end{cases}
\end{equation}

For any set of $m$ distinct points $0< x_1 < \dots <x_m< 1$, the associated measurement space $W_m=\vspan\{ \omega_{x_1},\dots, \omega_{x_m}\}$ coincides with the space of piecewise affine polynomials with nodes at $x_1,\dots,x_m$ that vanish at the boundary. Denoting $x_0\coloneqq0$ and $x_{m+1}\coloneqq1$, we have
\begin{equation} \label{Wm_equiv}
W_m = \{ \omega \in \cC^0([0,1]),\ \omega|_{[x_k,x_{k+1}]} \in \mathbb P_1,\ 0\leq k \leq m,\text{ and }\omega(0)=\omega(1)=0 \}.
\end{equation}
As an example for the space $V_n$, let us consider the
span of the Fourier basis (here orthonormalized in $V$),
\begin{equation}
\phi_k \coloneqq \frac{\sqrt{2}}{\pi k} \sin(k\pi x),\quad 1\leq k\leq n.
\label{phik}
\end{equation}

Let us now estimate $m(n)$ in this example if we choose the points with the greedy algorithms that we have introduced. This boils down to estimate for $J(V_n)$. In this simple case,
\begin{equation*}
J(V_n)\coloneqq\|\Phi\|_{\ell^1(\cD)}=\inf \Big \{\int_{x\in[0,1]} \|c_x\|_2\,\dx \; : \; 
\Phi=\int_{x\in[0,1]} c_x \omega_x\,\dx \Big\}
\end{equation*}
and we can derive $c_x$ for every $x\in [0,1]$ by differentiating twice the components of $\Phi$ since
\begin{equation*}
\Phi''(x)=\int_{y\in[0,1]} c_y \omega_y''(x)\,\dy= -\int_{y\in[0,1]} c_y \delta_y(x)\,\dx=-c_x.
\end{equation*}
Thus, using the basis functions $\phi_k$ defined by \eqref{phik}, we have
\begin{equation*}
J(V_n)
=\int_{x\in[0,1]} \left( \sum_{k=1}^n |\phi_k''(x)^2|\right)^{1/2}\,\dx
= \int_{x\in[0,1]} \left( \sum_{k=1}^n 2k\pi |\sin(k\pi x)|^2\right)^{1/2}\,\dx
\sim n^{3/2}.
\end{equation*}
Estimate \eqref{m-collectiveOMP} for the convergence of the collective OMP approach yields
\begin{equation*}
m(n)\gtrsim \frac{n^3}{\kappa^2\sigma^2},
\end{equation*}
while for the worst case OMP, estimate \eqref{rateworse1} gives
\begin{equation*}
m(n)\gtrsim \frac{n^5}{\kappa^2\sigma^2}.
\end{equation*}
These bounds deviate from the optimal estimation due to the use of the Hilbert-Schmidt norm in the analysis. Numerical results reported in \cite{BCMN2018}  reveal that the greedy algorithms actually behave much better in this case.

\section{Joint selection of $V_n$ and $W_m$}
\label{sec:GEIM}

\subsection{Optimality benchmark}
So far, we have studied linear and affine reconstruction algorithms which involve an affine reduced model space $V^{(\aff)}_n$ and an observation space $W_m$. In Section \ref{sec:optimal-affine} we have fixed the observation space, and we have discussed how to derive the optimal $V^{(\aff)}_n$, which is directly connected to the optimal affine algorithm of the benchmark that we have introduced in \eqref{eq:opt-aff-alg}. In Section \ref{sec:sensor-placement} we have examined the ``reciprocal'' of this problem, namely the case where we fix $V_n$ and we select sensor measurements $\omega_i$ from a dictionary $\cD$. The selection is done in order to build an observation space $W_m=\vspan\{\omega_i\}_{i=1}^m$ that yields stable reconstructions in the sense of minimizing $\mu(V_n, W_m)$ (or, equivalently, maximizing $\beta(V_n, W_m)$).

One can of course envision a combined approach in which we make a joint selection of $V_n$ and $W_m$. Of course, the basis $\{\omega_i\}_{i=1}^m$ spanning $W_m$ must be selected from a dictionary $\cD$ in order to account for the fact that we are working with sensor measurements. One way of defining the best performance that such a joint selection can deliver is given by the following extension of the benchmark \eqref{eq:opt-aff-alg}. For a fixed $m\geq 1$, the optimal performance of the joint approach is
\begin{equation}
\label{eq:opt-joint-aff-alg}
E^*_{\wca, \joint}(\cM, m)\;  = \;\min_{ \{\omega_i\}_{i=1}^m \in \cD^n } 
\underset{ {\substack{A: \vspan\{ \omega_i \}_{i=1}^n \to V \\ A \text{ affine}}}}{\min}\; E_{\wc}(A,\cM),
\end{equation}
for the case of affine algorithms. Of course, one can similarly define the best performance among all algorithms (affine and nonlinear) by removing the constraint that $A$ is affine in the definition above, that is,
\begin{equation}
\label{eq:opt-joint-alg}
E^*_{\wc, \joint}(\cM, m)\;  = \;\min_{ \{\omega_i\}_{i=1}^m \in \cD^n } 
\underset{ {\substack{A: \vspan\{ \omega_i \}_{i=1}^n \to V \\ A \text{ affine}}}}{\min}\; E_{\wc}(A,\cM).
\end{equation}

\subsection{A general nested greedy algorithm}
\label{sec:nested-greedy}
Finding the optimal elements $\{\omega_i^*\}_{i=1}^m$ and the optimal algorithm
$$
A^*:  W^* \to V,\quad W^* \coloneqq \vspan\{\omega_i^*\}_{i=1}^m
$$
that meet \eqref{eq:opt-joint-aff-alg} or \eqref{eq:opt-joint-alg} is a very difficult task, and, to best of the author's knowledge, this question remains an open problem. There are however a number of practical algorithms that have been proposed in order to perform a satisfactory joint selection of $V_n$ and $W_m$ in the framework of affine reconstruction algorithms (see, e.g., \cite{MMPY2015, MPPY2015, BCMN2018}). All strategies are based on nested greedy algorithms, and they can be seen as variations of the following general algorithm.

Assume that we have fixed a dictionary $\cD$ to select the sensors. Fix a minimal admissible value for the inf-sup stability $\underline{\beta}>0$. For $n=1$, select
$$
u_1 \in \argmax_{u\in \cM} \Vert u \Vert
$$
and set
$$
V_1 \coloneqq \vspan \{ u_1 \}.
$$
For the given $V_1$, apply the OMP sensor selection strategy from Section \ref{sec:collective-omp} or its variant from Section \ref{sec:worst-case-omp}. At every iteration $k\geq 1$ of the OMP, we pick an observation function $\omega_k^1$. The iterations stop as soon as we reach a value $k=m(1)$ such that
$$
\beta(V_1, \vspan\{\omega_k\}_{k=1}^{m(1)}) \geq \underline{\beta}.
$$
We then set
$$
\cO_1 \coloneqq \{\omega_k\}_{k=1}^{m(1)},
\qquad \text{and} \qquad
W_{m(1)} \coloneqq \vspan \{ \cO_1 \}.
$$

We next proceed by induction. At step $n>1$, assume that we have selected:
\begin{itemize}
\item the set of functions $\{u_1,\dots, u_{n-1}\}$ spanning $V_{n-1} \coloneqq \vspan \{u_1,\dots, u_{n-1}\}$,
\item the set of observation functions $\cup_{i=1}^{n-1} \cO_i $ spanning
$$
W_{m(n-1)} \coloneqq \vspan\{ \cup_{i=1}^{n-1} \cO_i \}.
$$
\end{itemize} 
We select the next function $u_n$ and the set $\cO_n$ of observation functions as follows. Consider the linear PBDW reconstruction algorithm $A_{n-1}$ associated to the spaces $V_{n-1}$ and $W_{m(n-1)}$ and find
$$
u_n \in \argmax_{u\in \cM} \Vert u - A_{n-1}(P_{W_{m(n-1)}}u)\Vert.
$$
We next define
$$
V_n \coloneqq \vspan \{ u_i \}_{i=1}^n = V_{n-1} + \vspan\{u_n\}.
$$

If $\beta(V_n, W_{ m(n-1) }) \geq \underline{\beta}$, the stability condition is satisfied at step $n$ without needing to add any extra observation functions. As a consequence, we set $\cO_n = \emptyset$. Then we define
$$
W_{m(n)} = \vspan\{ \cup_{i=1}^n \cO_i \} = W_{m(n-1)},
$$
and go to step $n+1$.

If $\beta(V_n, W_{ m(n-1) }) < \underline{\beta}$, we apply the OMP strategy for the constructed $V_n$, taking $W_{m(n-1)}$ as the initial measurement space to which we have to add new dimensions. For example, in the case of the worst case OMP, we iteratively select for $k\geq 1$
\begin{equation}
\omega^n_k =\argmax_{\omega\in \cD}\|P_{V_n}(\omega-P_{W_{m(n-1)}+\vspan\{ \omega_i \}_{i=1}^{k-1}  } \omega)\|.
\end{equation}
and we stop the iterations as soon as we reach a value $k=m(n)$ such that
$$
\beta \left(V_n, W_{m(n-1)}+\vspan\{ \omega_i \}_{i=1}^{k}  \right) \geq \underline{\beta}.
$$
Once this criterion is satisfied, we set 
$$
\cO_{n} \coloneqq \{ \omega^n_k  \}_{k=1}^{m(n)}
$$
and we finish iteration $n$ by defining
$$
W_{m(n)} = \vspan\{ \cup_{i=1}^n \cO_i \}.
$$
As a termination criterion for our algorithm, we can stop the outer iterations in $n$ as soon as
$$
\max_{u\in \cM} \Vert u - A_{n-1}(P_{W_{m(n-1)}}u)\Vert < \eps
$$
for a given prescribed tolerance $\eps>0$.

A straightforward application of the results proven in \cite{BCDDPW2011, MMT2016} leads to the following result. It expresses the fact that the reconstruction error with the spaces $V_n$ and $W_{m(n)}$ decays at a comparable rate as the Kolmogorov $n$-width.

\begin{theorem}
Let $A_n$ be the linear PBDW algorithm associated to the spaces  $V_n$ and $W_{m(n)}$ built with the nested greedy algorithm. Then, for $a, b, q \in \bR^*_+$,
\begin{equation}
\begin{cases}
d_n(\cM) &\lesssim n^{-q}  \\
d_n(\cM) &\lesssim e^{-a n^{b}}
\end{cases}
\quad \Rightarrow \quad
\begin{cases}
E_\wc(\cM, A_n) &\lesssim n^{-q} \\
E_\wc(\cM, A_n) &\lesssim e^{-\tilde a n^{\tilde b}},
\end{cases}
\end{equation}
where $\tilde b = \frac{b}{b+1}$, and $\tilde a$ depends on $a$ and some other technical parameters.

\end{theorem}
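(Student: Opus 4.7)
The plan is to split the argument into two pieces: (i) use the stability imposed by the inner OMP loop to convert the PBDW reconstruction error into a distance-to-$V_n$ error, and (ii) apply a generalized weak-greedy convergence argument to the outer loop in order to relate $\dist(\cM,V_n)$ to the Kolmogorov $n$-width $d_n(\cM)$, via the results of \cite{MMT2016} building on \cite{BCDDPW2011}.

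For step (i), by construction the inner loop is continued until $\beta(V_n, W_{m(n)}) \geq \underline{\beta}$, equivalently $\mu_n \leq \underline{\beta}^{-1}$. Substituted in the PBDW bound \eqref{eq:pbdw-err-bound}, this yields, uniformly in $n$ and $u \in \cM$,
\begin{equation*}
\|u - A_n(P_{W_{m(n)}} u)\| \;\leq\; \underline{\beta}^{-1}\, \dist(u, V_n),
\end{equation*}
hence $E_\wc(\cM, A_n) \leq \underline{\beta}^{-1}\, \dist(\cM, V_n)$, with $\dist(\cM, V_n)=\sup_{u\in\cM}\dist(u,V_n)$ as in \eqref{eq:epsn}. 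The proof is therefore reduced to showing that $\dist(\cM, V_n)$ inherits the decay rate of $d_n(\cM)$.

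For step (ii), I would interpret the outer greedy as a generalized weak greedy selection for $\cM$ in $V$. The delicate point is that $u_n$ is chosen as the argmax of the PBDW residual $u \mapsto \|u - A_{n-1}(P_{W_{m(n-1)}} u)\|$, not of $u \mapsto \dist(u, V_{n-1})$ as in \cite{BCDDPW2011}. One side of a comparison between the two surrogates is immediate from step (i) applied at level $n-1$: $\|u - A_{n-1}(P_{W_{m(n-1)}} u)\| \leq \underline{\beta}^{-1}\, \dist(u, V_{n-1})$. The opposite inequality is in general \emph{not} available --- the PBDW residual can be strictly smaller than the distance, e.g.\ whenever $V_{n-1} \subsetneq W_{m(n-1)}$ --- so a direct reduction to \cite{BCDDPW2011} is impossible and one must invoke the extension proved in \cite{MMT2016}, which adapts the $n$-width comparison argument to surrogates only upper-bounded by the distance, at the cost of absorbing an extra factor $\underline{\beta}^{-1}$ into the constants. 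Applied to the outer loop, it gives $\dist(\cM, V_n) \lesssim n^{-q}$ when $d_n(\cM) \lesssim n^{-q}$, and $\dist(\cM, V_n) \lesssim e^{-\tilde a n^{\tilde b}}$ with $\tilde b = b/(b+1)$ and $\tilde a = \tilde a(a,b,\underline{\beta})$ when $d_n(\cM) \lesssim e^{-a n^b}$. Combining with step (i) closes the proof, the prefactor $\underline{\beta}^{-1}$ being absorbed into the hidden constant (polynomial case) or into $\tilde a$ (subexponential case).

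The main obstacle is step (ii): certifying a weak-greedy-type property for a selection rule that uses the PBDW residual rather than the distance to the current subspace. This is precisely where the stability bound $\mu_n \leq \underline{\beta}^{-1}$ enforced by the inner loop is crucial, since it is the only mechanism coupling the greedy surrogate to the true best-approximation error, and the generalized weak-greedy convergence rates of \cite{MMT2016} are tailored to exactly this one-sided control. Everything else --- the pointwise stability bound, the conversion from $\dist(\cM, V_n)$ to $E_\wc(\cM, A_n)$, and the bookkeeping of constants --- is routine.
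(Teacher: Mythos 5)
The paper does not actually prove this theorem: it states it with the single sentence ``A straightforward application of the results proven in \cite{BCDDPW2011, MMT2016} leads to the following result.'' Your two-step architecture --- (i) use the inner-loop stopping criterion $\beta(V_n,W_{m(n)})\geq\underline{\beta}$ to bound $E_\wc(\cM,A_n)\leq\underline{\beta}^{-1}\dist(\cM,V_n)$ via \eqref{eq:pbdw-err-bound}, then (ii) treat the outer loop as a weak greedy algorithm and import the $n$-width rate-comparison results --- is exactly the intended route, and step (i) is complete and correct.

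However, you have not actually closed step (ii), and you yourself put your finger on why. The weak-greedy machinery of \cite{BCDDPW2011} (and its refinement in DeVore--Petrova--Wojtaszczyk) needs the selected element to satisfy $\dist(u_n,V_{n-1})\geq\gamma\,\sup_{u\in\cM}\dist(u,V_{n-1})$, which requires the selection surrogate to be bounded \emph{below} by a multiple of $\dist(\cdot,V_{n-1})$, not just above. You correctly observe that the PBDW residual admits no such lower bound when $m(n-1)>n-1$ (the estimator $A_{n-1}(P_Wu)$ lives in $V_{n-1}+W_{m(n-1)}$, not in $V_{n-1}$), and then you resolve this by asserting that \cite{MMT2016} proves a weak-greedy rate theorem for surrogates that are \emph{only} upper-bounded by the distance. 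That attribution is doubtful: the GEIM analysis in \cite{MMT2016} obtains the weak-greedy parameter precisely because the GEIM interpolant lies in $V_{n-1}$ (there $n=m$), so the lower bound $\|u-\cJ_{n-1}u\|\geq\dist(u,V_{n-1})$ comes for free and the argument is two-sided after all. A one-sided bound alone cannot yield the weak-greedy property for $\dist(\cdot,V_{n-1})$: in the very scenario you describe, the surrogate maximum can be small while $\sup_u\dist(u,V_{n-1})$ stays large, and then $\dist(\cM,V_n)$ stalls. (Note that in that scenario $E_\wc(\cM,A_{n-1})$ is itself small, so the theorem's \emph{conclusion} is not threatened --- but this shows the rate comparison must be run on $E_\wc(\cM,A_n)$ directly rather than factored through $\dist(\cM,V_n)$ as your step (ii) does, and that reformulation of the DeVore--Petrova--Wojtaszczyk flatness argument is the genuinely nontrivial content here.) Either verify that the cited reference really contains the one-sided extension you need, or restructure step (ii) around the quantity $\sigma_n=\max_{u\in\cM}\|u-A_n(P_{W_{m(n)}}u)\|$; as written, the central step of the proof is delegated to a result whose stated form does not match what you use.
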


\subsection{The Generalized Empirical Interpolation Method}
\label{sec:geim-algo}
Among the many variants that one can consider of the above joint selection strategy, one that has drawn particular attention is the so-called Generalized Empirical Interpolation Method (GEIM, \cite{MM2013, MMPY2015, MMT2016}). In this method, at every step $n\geq 1$, we add only one observation function. The criterion to select it is close (but not entirely equivalent) to the one of making one single step of the worst case OMP of Section \ref{sec:worst-case-omp}. This implies that we prescribe $m(n) = n$ for all $n\geq 1$, and the dimension of the reduced model $V_n$ is equal to the one of the observation space $W_n$. One consequence of this construction is that one cannot guarantee that $\beta(V_n, W_n)$ remains bounded away from $0$. This is in contrast to the algorithm of Section \ref{sec:nested-greedy}. In practice, it has been observed that $\beta(V_n, W_n)$ slowly decreases as $n\to \infty$ (see, e.g., \cite{MM2013, MMPY2015}) but there is no a priori analysis quantifying the rate of decay. 

The algorithm works as follows (see, e.g., \cite{MMPY2015}). For $n=1$, select
$$
u_1 \in \argmax_{u\in \cM} \Vert u \Vert
$$
and set
$$
V_1 \coloneqq \vspan \{ u_1 \}.
$$
The first observation function is defined as
$$
\omega_1 \in \argmax_{\omega \in \cD} |\left< \omega, u_1 \right> |,
$$
and we set
$$
W_1 \coloneqq \vspan\{ \omega_1\}.
$$
We then proceed by induction. At step $n>1$, assume that we have selected $\{u_1,\dots, u_{n-1} \}$ and $\{\omega_1,\dots, \omega_{n-1}\}$ which respectively span the subspaces $V_{n-1}$ and $W_{n-1}$. We define $A_{n-1}$ as the PBDW reconstruction algorithm associated to $V_{n-1}$ and $W_{n-1}$. We choose
$$
u_n \in \argmax_{u\in \cM} \Vert u - A_n(P_{W_{n-1}} u) \Vert,
$$
and then select
$$
\omega_n \in \argmax_{\omega \in \cD}  | \left< \omega, u_n - A_{n-1}(P_{W_{n-1}} u) \right> |.
$$
We finally define
$$
V_n \coloneqq V_{n-1} + \vspan\{ u_n \},
\qquad \text{and} \qquad
W_n \coloneqq W_{n-1} + \vspan\{ \omega_n\},
$$
and go the next step $n+1$.

The method is called generalized interpolation because we have the interpolatory property that $\ell_i(v) = \ell_i(A_n(v))$ for $i=1,\dots, n$. Also, for any $v\in V_n$, $A_n(v)=v$.

\section{A Piece-Wise Affine Algorithm to reach the Benchmark Optimality}
\label{sec:piecewise-affine}

In this section, we come back to the setting where we work with a fixed observation space $W$ and a fixed number $m$ of observations $z_i = \ell_i(u), i=1, \dots, m$.  Our goal is to discuss how to go beyond the linear/affine framework that we have discussed in sections  \ref{sec:optimal-affine} to \ref{sec:GEIM}, and how to build algorithms that can deliver a performance close to optimal.

The simplicity of the plain PBDW method \eqref{eq:pbdw-lin} and its above variants come together with a 
fundamental limitation of performance: since the map $w\mapsto A_n(w)$ is linear or affine,
the reconstruction necessarily belongs to an $m$ or $m+1$ dimensional space, and therefore the worst case performance
is necessarily bounded from below by the Kolmogorov width $d_m(\cM)$ or $d_{m+1}(\cM)$. In other words, if we restrict ourselves to affine algorithms, we have
\begin{equation}
\underset{A:W\to V}{\min}\; E_{\wc}(A,\cM)
\leq
d_{m+1}(\cM)
\leq
\underset{ {\substack{A:W\to V \\ A \text{ affine}}}}{\min}\; E_{\wc}(A,\cM).
\end{equation}
and affine algorithms will miss optimality especially in cases where
\begin{equation}
\underset{A:W\to V}{\min}\; E_{\wc}(A,\cM)
\ll
d_{m+1}(\cM).
\end{equation}
This is expected to happen in elliptic problems with weak coercivity or in hyperbolic problems. 

In view of this limitation, the principal objective of \cite{CDMN2020}
is to develop {\it nonlinear} state estimation techniques which \emph{provably} overcome
the bottleneck of the Kolmogorov width $d_m(\cM)$. The next pages summarize the main ideas
from this contribution. We will focus particularly on summarizing a nonlinear recovery method based on a family of affine reduced models $(V_k)_{k=1,\dots,K}$.
Each $V_k$ has dimension $n_k\leq m$ and serves 
as a local approximation to a portion $\cM_k$ of the solution manifold. 
Applying the PBDW method with each such space, results in a collection 
of state estimators $u^*_k$. The value $k$ for which the true state $u$ belongs
to $\cM_k$ being unknown, we introduce a {\em model selection} procedure in order 
to pick a value $k^*$, and define the resulting estimator $u^*=u^*_{k^*}$.
We show that this estimator has performance comparable to  
optimal in a sense which we make precise later on, and which cannot be achieved 
by the standard linear/affine PBDW method due to the above described limitations.

Model selection is a classical topic of mathematical statistics \cite{Massart2007},
with representative techniques such as complexity penalization or cross-validation
in which the data are used to select a proper model. The approach that we present differs
from these techniques in that it exploits  {(in the spirit of {\em data assimilation})} the PDE model which is available
to us, by evaluating the distance to the manifold
\begin{equation}
\dist(v,\cM)=\min_{y\in Y} \|v-u(y)\|,
\label{distM}
\end{equation}
of the different estimators $v=u^*_k$ for $k=1,\dots,K$, and picking the value
$k^*$ that minimizes it. In practice, the quantity \eqref{distM}
cannot be exactly computed and we instead rely on a computable surrogate quantity 
${\cal S}(v,\cM)$ expressed in terms of the residual to the PDE. One typical instance 
where such a surrogate is available and easily computable is when the parametric PDE 
has the form of a linear operator equation
\begin{equation}
\cB(y)u=f(y),
\end{equation}
where $\cB(y)$ is boundedly invertible from $V$ to $V'$, or more generally, from $V\to Z'$ for a test space $Z$ different from $V$, uniformly over $y\in Y$. Then ${\cal S}(v,\cM)$ is obtained
by minimizing the residual 
\begin{equation}
\cR(v,y)=\|\cB(y)v{-}f(y)\|_{Z'},
\end{equation}
over $y\in Y$. In other words,
$$
{\cal S}(v,\cM) = \min_{y\in \rY} \cR(v,y).
$$
This task itself is greatly facilitated in the case where the operators $\cB(y)$ and
source terms $f(y)$ have affine dependence in $\rY$. One relevant example is the
second order elliptic diffusion equation with affine diffusion coefficient,
\begin{equation}
-{\rm div}(a\nabla u)=f(y), \quad a=a(x; y)=\bar a(x)+\sum_{j=1}^d y_j\psi_j(x).
\label{ellip}
\end{equation}

%

%
%
%
%

\subsection{Optimality benchmark under perturbations}
In order to present the piece-wise affine strategy and its performance, we need to enrich the notions of benchmark optimality introduced in section \ref{sec:optim-benchmarks}. In that section, we introduced in \eqref{eq:benchmark-M} the quantity $\delta_0$ which was defined as
\begin{equation}
\delta_0=\delta_0(\cM,W)\coloneqq {\sup}\{{\rm diam}(\cM_w)\,:\, w\in W\}=\sup \{\|u-v\|\; : \; u,v\in \cM, \;u-v\in W^\bot \}.
\end{equation}
We saw in \eqref{eq:equiv-optimality} that $\delta_0$ can be related to the worst-case optimal performance $E^*_\wc(\cM)$ by the equivalence
$$
\frac 1 2 \delta_0 \leq E^*_\wc(\cM) \leq \delta_0.
$$
We next introduce a somewhat relaxed benchmark quantity to take into account the fact that computationally feasible algorithms usually introduce simplifications of the geometry of the manifold. In the case of the plain PBDW, the simplification is that the manifold is ``replaced'' by a linear or an affine subspace $V_n$, which makes that for most practical and theoretical purposes, $\cM$ could be replaced by the cylinder $\cK_n$ introduced in \eqref{eq:cylinder}. As we will see later on, the relaxed benchmark will also allow us to take into account model error and measurement noise in the analysis.

In order to account for manifold simplification as well as model bias,
for any given accucary $\sigma>0$, we introduce the $\sigma$-offset of $\cM$,
\begin{equation}
\cM_\sigma\coloneqq \{v\in V\; : \; \dist(v,\cM) \leq \sigma\}=\bigcup_{u\in\cM} B(u,\sigma),
\end{equation}
where $B(u,\sigma)$ is the ball of center $u$ and radius $\sigma$.
Likewise, we introduce the set 
\begin{equation}
\cM_{\sigma,w} =\cM_\sigma\cap (\omega + W^\perp),
\label{msigmaw}
\end{equation}
which is a perturbed set of $\cM_w$ introduced in \eqref{eq:M-omega} (note that this set still excludes uncertainties in $w$ but we will come to this in a moment).

Our benchmark for the worst case error is now defined as
\begin{equation}
\delta_\sigma\coloneqq \max_{w\in W} {\rm diam}(\cM_{\sigma,w})=\max \{\|u-v\|\; : \; u,v\in \cM_\sigma, \;u-v\in W^\bot \}.
\label{benchapp}
\end{equation}
Figures \ref{fig:bird2} and \ref{fig:bird-offset} give an illustration of $\delta_0$, $\delta_\sigma$ and the optimal scheme $A^*_\wc$ based on Chebyshev centers which was introduced in \eqref{eq:optimal-A-Cheb-center}.
\begin{figure}[ht]
\begin{subfigure}{.5\textwidth}
  \centering
  \includegraphics[width=\textwidth]{figures/state-estimation/bird-shape-paper.png}
  \caption{Perfect Physical Model + No Noise.}
  \label{fig:bird2}
\end{subfigure}
\begin{subfigure}{.5\textwidth}
  \centering
  \includegraphics[width=\textwidth]{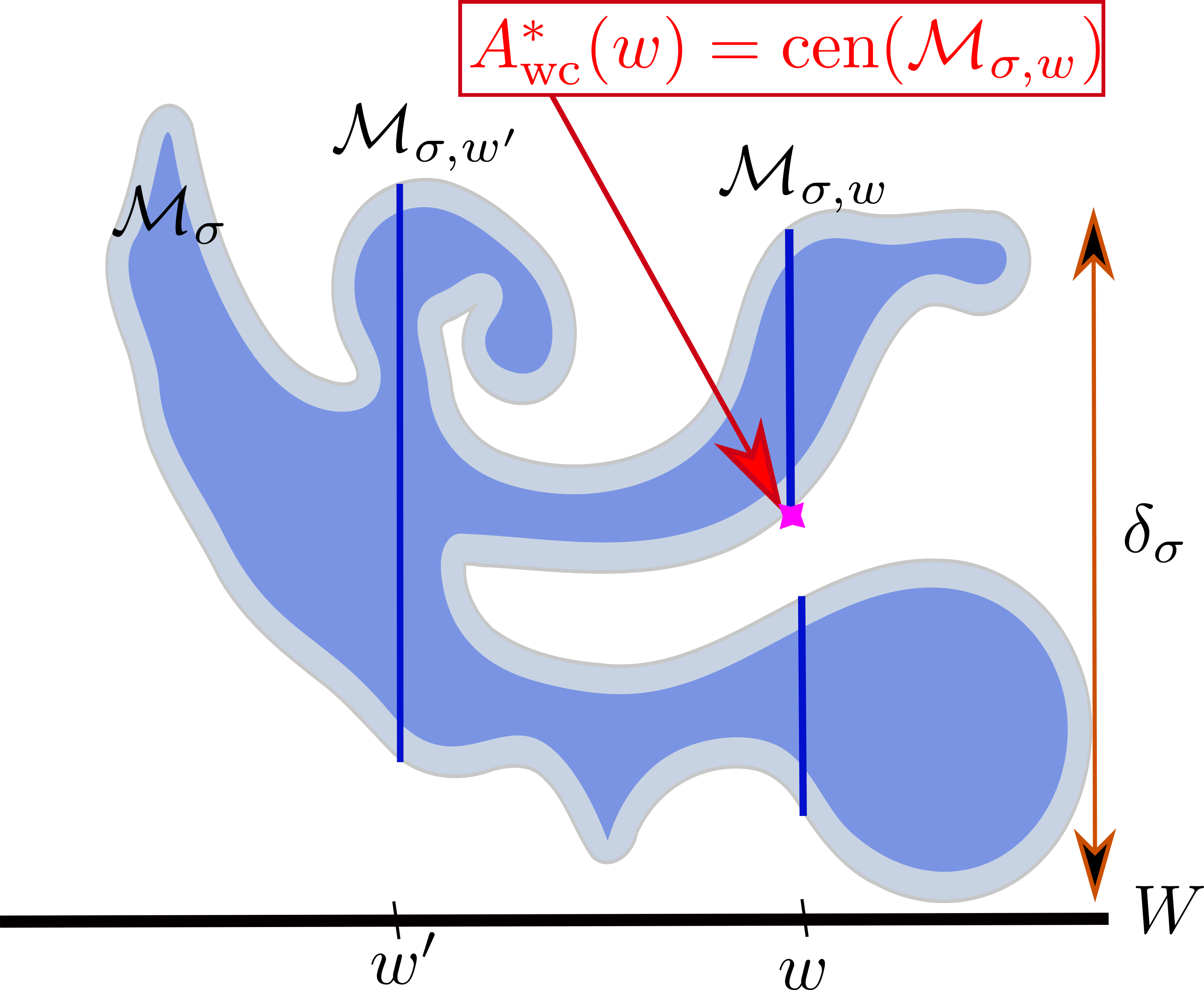}
  \caption{Inexact Physical Model + Noise.}
  \label{fig:bird-offset}
\end{subfigure}
\caption{Illustration of the optimal recovery benchmark on a manifold in the two dimensional Euclidean space. Left: benchmark for the idea scenario of a perfect model and noiseless observations. Right: how the benchmark is degraded by an abstract factor $\sigma$ associated to the model error and the observation noise.}
\label{fig:fig}
\end{figure}

To account for measurement noise, we introduce the quantity
\begin{equation}
\tilde \delta_\sigma\coloneqq \max \{\|u-v\|\; : \; u,v\in \cM,\; \|P_W u-P_W v\|\leq \sigma \}.
\end{equation}
{The two quantities $\delta_\sigma$ and $\tilde \delta_\sigma$ are not equivalent, however}
one has the framing
\begin{equation}
\delta_\sigma-2\sigma\leq \tilde \delta_{2\sigma}\leq \delta_\sigma+2\sigma.
\label{framing}
\end{equation}

In {the following}  analysis of reconstruction methods,
we use the quantity $\delta_\sigma$ as a benchmark which, in view of this last observation, also
accounts for the lack of accuracy in the measurement of $P_Wu$.
Our objective is therefore to design an algorithm that, for a given tolerance $\sigma>0$,
recovers from the measurement $w=P_Wu$ an approximation to $u$
with accuracy comparable to $\delta_\sigma$. Such an algorithm requires 
that we are able to capture the solution manifold up to some tolerance 
$\e\leq \sigma$ by some reduced model. 

\subsection{Piecewise affine reduced models}

Linear or affine reduced models, as used in the affine PBDW algorithm,
are not suitable for approximating the solution manifold
when the required tolerance $\e$ is too small.
In particular, when $\e{<} d_m(\cM)$ one would then need to use a linear space $V_n$ of 
dimension $n{>} m$, therefore {making} $\mu(V_n,W)$ infinite.

One way out is to replace the single space $V_n$ by a 
{\em family} of affine spaces 
\begin{equation}
V_k=\bar u_k+\bar V_k, \quad k=1,\dots,K,
\end{equation}
each of them having dimension 
\begin{equation}
\dim(V_k)=n_k\leq m,
\end{equation}
such that the manifold is well captured by the union of these spaces,
in the sense that 
\begin{equation}
{\rm dist}\(\cM,\bigcup_{k=1}^K V_k\)\leq \e
\end{equation}
for some prescribed tolerance $\e>0$. This is equivalent to saying that there exists a partition
of the solution manifold
\begin{equation}
\cM=\bigcup_{k=1}^K \cM_k,
\end{equation}
such that we have local certified bounds
\begin{equation}
{\rm dist}(\cM_k,V_k)\leq \e_k \leq \e,\quad k=1,\dots,K.
\label{localacc}
\end{equation}
We may thus think of the family $(V_k)_{k=1,\dots,K}$ as a piecewise
affine approximation to $\cM$. We stress that, in contrast to the hierarchies 
$(V_n)_{n=0,\dots,m}$ produced by reduced modeling algorithms, the spaces $V_k$
do not have dimension $k$ and are not nested. {Most importantly,} $K$ is not limited
by $m$ while each $n_k$ is.

The objective of using a piecewise reduced model in the context of state
estimation is to have a joint control on the local accuracy $\e_k$
as expressed by \eqref{localacc} and on the stability of the PBDW when using any
individual $V_k$. This means that, for some prescribed 
$\mu>1$, we ask that
\begin{equation}
\mu_k=\mu(\bar V_k,W) \leq \mu, \quad k=1,\dots,K.
\label{localstab}
\end{equation}
According to \eqref{eq:pbdw-err-bound}, the worst case error bound over $\cM_k$ when using the PBDW method
with a space $V_k$ is given by the product $\mu_k\e_k$. This suggests to 
alternatively require from the collection $(V_k)_{k=1,\dots,K}$, that for some 
prescribed $\sigma>0$, one has
\begin{equation}
\sigma_k\coloneqq \mu_k\e_k \leq \sigma, \quad k=1,\dots,K.
\label{sigmaadm}
\end{equation}
{
This leads us to the following definitions.}

\begin{definition}
{The family $(V_k)_{k=1,\dots,K}$ is {\em $\sigma$-admissible}
if \eqref{sigmaadm} holds. It is {\em $(\e,\mu)$-admissible}
if \eqref{localacc} and \eqref{localstab} are jointly satisfied.}
\end{definition}

Obviously, any $(\e,\mu)$-admissible
family is $\sigma$-admissible with $\sigma\coloneqq \mu\e$.
In this sense the notion of $(\e,\mu)$-admissibility is thus more restrictive
than that of $\sigma$-admissibility. The benefit of the first notion is in 
the uniform control on the size of $\mu$ which is
critical in the presence of noise.

If $u\in \cM$ is our unknown state and $w=P_W u$ is its observation, we may apply the 
PBDW method for the different $V_k$ in the given family, which yields a corresponding 
family {of} estimators
\begin{equation}
u_k^*=u_k^*(w)={\rm argmin}\{\dist(v,V_k)\, : \, v\in \omega + W^\perp\}, \quad k=1,\dots,K.
\label{estimators}
\end{equation}
If $(V_k)_{k=1,\dots,K}$ is $\sigma$-admissible, we find that the accuracy bound
\begin{equation}
\|u-u_k^*\|\leq \mu_k{\rm dist}(u,V_k) \leq \mu_k\e_k= \sigma_k \leq \sigma,
\end{equation}
holds whenever $u\in \cM_k$.

Therefore, if in addition to the observed data $w$
one had an oracle giving the information on which portion $\cM_k$ of the manifold the unknown state sits,
we could derive an estimator with worst case error 
\begin{equation}
E_\wc\leq \sigma. 
\label{oracle}
\end{equation}
This information is, however, not available and such a worst case error estimate
cannot be hoped for, even with an additional multiplicative constant. Indeed, 
as we shall see below, $\sigma$ can be fixed arbitrarily small
by the user when building the family $(V_k)_{k=1,\dots,K}$, while we know from \eqref{eq:equiv-optimality} that the
worst case error is bounded from below by $E_{\wc}^*(\cM)\geq \frac 1 2 \delta_0$ which could be non-zero.
We will thus need to replace the ideal choice of $k$ by a model selection procedure
only based on the data $w$, that is, a map
\begin{equation}
w \mapsto k^*(w),
\end{equation}
leading to a choice of estimator $u^*=u^*_{k^*}=A_{k^*}$. We shall prove further that such an estimator
is able to achieve the accuracy
\begin{equation}
E_\wc(A_{k^*}, \cM) \leq \delta_{\sigma},
\end{equation}
that is, the benchmark introduced in \S 2.2. Before discussing this model selection, we discuss the 
existence and construction of $\sigma$-admissible or $(\e,\mu)$-admissible families.

\subsection{Constructing admissible reduced model families} \label{ssec:admissible_families}

For any arbitrary choice of $\e>0$ and $\mu\geq 1$, the existence of an $(\e,\mu)$-admissible family
results from the following observation: since the manifold $\cM$ is a compact set of $V$, there exists
a finite $\e$-cover of $\cM$, that is, a family $\bar u_1,\dots,\bar u_K \in V$ such that
\begin{equation}
\cM\subset \bigcup_{k=1}^K B(\bar u_k,\e),
\end{equation}
or equivalently, for all $v\in \cM$, there exists a $k$ such that $\|v-\bar u_k\|\leq \e$. With such an $\e$ cover,
we consider the family of trivial affine spaces defined by
\begin{equation}
V_k=\{\bar u_k\}=\bar u_k+\bar V_k, \quad \bar V_k=\{0\},
\end{equation}
thus with $n_k=0$ for all $k$. The covering property implies that \eqref{localacc} holds. On the other hand,
for the $0$ dimensional space, one has
\begin{equation}
\mu(\{0\},W)=1,
\end{equation}
and therefore \eqref{localstab} also holds. The family $(V_k)_{k=1,\dots,K}$ is therefore $(\e,\mu)$-admissible,
and also  $\sigma$-admissible with $\sigma=\e$.

This family is however not satisfactory for algorithmic purposes for two main reasons. First, the manifold
is not explicitly given to us and the construction of the centers $\bar u_k$ is by no means trivial. Second, 
asking for an $\e$-cover, would typically require that $K$ becomes extremely large as $\e$ goes to $0$. 
For example, assuming that the parameter to solution $y\mapsto u(y)$ has Lipschitz constant $L$,
\begin{equation}
\|u(y)-u(\tilde y)\|\leq L|y-\tilde y|, \quad y,\tilde y\in Y,
\end{equation}
for some norm $|\cdot|$ of $\bR^d$, then an $\e$ cover for $\cM$ would be induced by an $L^{-1}\e$ cover for $\rY$
which has cardinality $K$ growing like $\e^{-d}$ as $\e\to 0$. Having a family of moderate size $K$ is important
for the estimation procedure since we intend to apply the PBDW method for all $k=1,\dots,K$. 

In order to construct $(\e,\mu)$-admissible or $\sigma$-admissible families of better controlled size, we need to split the
manifold in a more economical manner than through an $\e$-cover, and use spaces $V_k$ of general
dimensions $n_k\in \{0,\dots,m\}$ for the various manifold portions $\cM_k$. To this end, we combine
standard constructions of linear reduced model spaces with an iterative splitting
procedure operating on the parameter domain $\rY$. Let us mention that various ways 
of splitting the parameter domain have already been considered in order to produce local 
reduced bases having both controlled cardinality and prescribed accuracy \cite{EPR2010,MS2013,BCDGJP2021}. However, these works are devoted to forward model reduction according to the terminology that we introduced in Section \ref{sec:intro-fwd-inv}. Here our goal is different since we want to control both the accuracy $\e$ and the stability $\mu$ with respect to 
the measurement space $W$.

We describe the greedy algorithm for constructing $\sigma$-admissible families, and explain how it
should be modified for $(\e,\mu)$-admissible families. For simplicity {we} consider the case where
$\rY$ is a rectangular domain with sides parallel to the main axes, the extension to a more general
bounded domain $\rY$ being done by embedding it in such a {hyper}-rectangle. We are given a prescribed target value $\sigma>0$
and the splitting procedure starts from $\rY$. 

At step $j$,
a disjoint partition of $\rY$ into rectangles $(\rY_k)_{k=1,\dots,K_j}$ with sides parallel
to the main axes has been generated. It induces
a partition of $\cM$ given by
\begin{equation}
\cM_k\coloneqq \{u(y)\,:\, y\in \rY_k\}, \quad k=1,\dots,K_j.
\end{equation}
To each $k\in \{1,\dots,K_j\}$ we associate a hierarchy of affine reduced basis spaces
\begin{equation} \label{eq:aff_from_basis}
V_{n,k}=\bar u_k+\bar V_{n,k}, \quad n=0,\dots, m.
\end{equation}
where $\bar u_k=u(\bar y_k)$ with $\bar y_k$ the vector defined as the center of the rectangle $\rY_k$. The nested linear 
spaces
\begin{equation}
\bar V_{0,k}\subset \bar V_{1,k}\subset \cdots\subset \bar V_{m,k}, \quad \dim({\bar V}_{n,k})=n,
\end{equation}
are meant to approximate the translated portion of the manifold $\cM_k-\bar u_k$. 
For example, they could be reduced basis spaces obtained by applying the 
greedy algorithm to $\cM_k-\bar u_k$, or spaces resulting from local $n$-term
polynomial approximations of $u(y)$ on the rectangle  $\rY_k$.
Each space $V_{n,k}$ has a given accuracy bound
and stability constant 
\begin{equation} \label{eq:acc_and_stab}
{\rm dist}(\cM_k,V_{n,k}) \leq  \e_{n,k}  \quad{\rm and}\quad \mu_{n,k}\coloneqq \mu(\bar V_{n,k},W).
\end{equation} 

We define the test quantity
\begin{equation} \label{eq:pm_criteria}
\tau_k=\min_{n=0,\dots,m} \mu_{n,k}\e_{n,k}.
\end{equation}
If $\tau_k\leq \sigma$, the rectangle $\rY_k$ is not {split} and becomes a member of the final partition.
The affine space associated to $\cM_k$ is 
\begin{equation}
V_k=\bar u_k+\bar V_k,
\end{equation}
where $V_k=V_{n,k}$ for the value of $n$ that minimizes $\mu_{n,k}\e_{n,k}$. The rectangles
$\rY_k$ with $\tau_k> \sigma$ are, on the other hand, {split} into a finite number of sub-rectangles 
in a way that we discuss below. This results in the new larger partition 
$(\rY_k)_{k=1,\dots,K_{j+1}}$ after relabelling the $\rY_k$. The algorithm terminates
at the step $j$ as soon as $\tau_k\leq \sigma$ for all $k=1,\dots,K_j=K$,
and the family $(V_k)_{k=1,\dots,K}$ is $\sigma$-admissible.
In order to obtain an $(\e,\mu)$-admissible family, we simply modify the test quantity $\tau_k$
by defining it instead as
\begin{equation}
\tau_k\coloneqq \min_{n=0,\dots,m}\max\Big\{\frac {\mu_{n,k}}\mu, \frac {\e_{n,k}}\e\Big\}
\end{equation}
and splitting the cells for which $\tau_k>1$.

The splitting of one single rectangle $\rY_k$ can be performed in various ways. When the parameter 
dimension $d$ is moderate, we may subdivide each side-length at the mid-point, resulting into $2^d$ sub-rectangles
of equal size. This splitting becomes too   costly as $d$ gets large, in which case it is preferable to make a choice 
of $i\in \{1,\dots,d\}$ and subdivide $\rY_k$ at the mid-point of the side-length in the $i$-coordinate, resulting
in  only $2$ sub-rectangles. In order to decide which coordinate to pick, we consider the $d$ possibilities and
take the value of $i$ that minimizes the quantity
\begin{equation}
\tau_{k,i}=\max \{\tau_{k,i}^{{-}},\tau_{k,i}^+\},
\end{equation}
where $(\tau_{k,i}^{{-}},\tau_{k,i}^+)$ are the values of $\tau_k$ for the two subrectangles obtained by splitting along the $i$-coordinate.
In other words, we split in the direction that decreases $\tau_k$  most effectively. In order to be certain that all side-length
are eventually {split}, we can mitigate the greedy choice of $i$ in the following way: if $\rY_k$ has been generated
by $l$ consecutive refinements, and therefore has volume $|\rY_k|=2^{-l}|Y|$, and if $l$ is even, we choose $i={(l/2 \,{\rm mod}\,d)}$. 
This 
means that at each even level we split in a cyclic manner in the coordinates $i\in\{1,\dots,d\}$.

Using such elementary splitting rules, we are ensured that the algorithm must terminate. Indeed, we are guaranteed
that for any $\eta>0$, there exists a level $l=l(\eta)$ such that any rectangle $\rY_k$ generated by $l$ consecutive refinements
has side-length smaller than $2\eta$ in each direction. Since the parameter-to-solution map is assumed to be continuous, for any $\e>0$, we can pick
$\eta>0$ such that
\begin{equation}
\|y-\tilde y\|_{\ell^\infty}\leq \eta \implies \|u(y)-u(\tilde y)\|\leq \e, \quad y,\tilde y\in Y.
\end{equation}
Applying this to $y\in \rY_k$ and $\tilde y=\bar y_k$, we find that {for $\bar u_k = u(\bar y_k)$}
\begin{equation}
\|u-\bar u_k\| \leq \e, \quad u\in \cM_k.
\end{equation}
Therefore, for any rectangle $\rY_k$ of generation $l$, we find that the trivial affine space $V_k=\bar u_k$ has local accuracy $\e_k\leq \e$
and $\mu_k=\mu(\{0\},W)=1\leq \mu$, which implies that such a rectangle would not anymore be refined by the algorithm.

\subsection{Reduced model selection and recovery bounds} \label{ssec:recovery_bounds}
\label{sec:model-selection}
We return to the problem of selecting an estimator 
within the family $(u_k^*)_{k=1,\dots,K}$ defined by \eqref{estimators}.
In an idealized version, the selection procedure picks the value $k^*$
that minimizes the distance of $u_k^*$ to the solution manifold, that is,
\begin{equation}
k^*={\rm argmin}\{ \dist(u_k^*,\cM)\,:\, k=1,\dots,K\}
\label{idealsel}
\end{equation}
and takes for the final estimator
\begin{equation}
\label{ustar}
u^*=u^*(w)\coloneqq A_{k^*}(w)=u^*_{k^*}(w).
\end{equation}
Note that $k^*$ also depends on the observed data $w$.
This estimation procedure is not realistic since the computation of the distance
of a known function $v$ to the manifold
\begin{equation}
\dist(v,\cM)=\min_{y\in Y} \|u(y)-v\|,
\end{equation}
is a high-dimensional non-convex  problem which
necessitates to explore the whole solution manifold. A more realistic 
procedure is based on replacing this distance by a
surrogate quantity 
${\cal S}(v,\cM)$ that is easily computable
and satisfies a uniform equivalence
\begin{equation}
r\dist(v,\cM)\leq {\cal S}(v,\cM)\leq R \dist(v,\cM), \quad v\in V,
\label{surframe}
\end{equation}
for some constants $0<r\leq R$. We then instead take for $k^*$ the value
that minimizes this surrogate, that is,
\begin{equation}
k^*={\rm argmin}\{ {\cal S}(u_k^*,\cM)\,:\, k=1,\dots,K\}.
\label{realisticsel}
\end{equation}
Before discussing the derivation of ${\cal S}(v,\cM)$ in concrete cases, we establish a recovery
bound in the absence of model bias and noise.

\begin{theorem}
\label{theorecovery}
Assume that the family $(V_k)_{k=1,\dots,K}$ is $\sigma$-admissible for some $\sigma>0$. Then, the idealized estimator
based on \eqref{idealsel}, \eqref{ustar}, satisfies the worst case error estimate
\begin{equation}
E_\wc(A_{k^*}, \cM)=\max_{u\in \cM} \|u-u^*(P_Wu)\| \leq \delta_\sigma,
\label{recovideal}
\end{equation}
where $\delta_\sigma$ is the benchmark quantity defined in \eqref{benchapp}. When using the estimator
based on \eqref{realisticsel}, the worst case error estimate is modified into
\begin{equation}
E_\wc(A_{k^*}, \cM) \leq \delta_{\kappa\sigma}, \quad \kappa=\frac R r>1.
\label{recovreal}
\end{equation}
\end{theorem}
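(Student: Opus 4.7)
The plan is to exploit two facts: (i) the PBDW-type estimators $u_k^*$ always lie in the affine space $w + W^\perp$, so every candidate reconstruction shares the same projection $P_W u^*_k = w = P_W u$ as the unknown state; (ii) by $\sigma$-admissibility, at least one index $k_0$ (namely the one with $u\in\cM_{k_0}$) produces an estimator that is within distance $\sigma$ of $\cM$, which the selection rule then exploits to certify that the \emph{chosen} $u_{k^*}^*$ is also close to $\cM$. The conclusion will then follow by reading off the benchmark $\delta_\sigma$ directly from its definition.

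In more detail, I would first fix $u\in\cM$ and pick an index $k_0\in\{1,\dots,K\}$ with $u\in\cM_{k_0}$. The PBDW error bound (inequality \eqref{eq:pbdw-err-bound}) combined with the $\sigma$-admissibility assumption $\mu_{k_0}\varepsilon_{k_0}\leq \sigma$ yields
\[
\|u-u_{k_0}^*\|\leq \mu_{k_0}\,\mathrm{dist}(u,V_{k_0})\leq \mu_{k_0}\varepsilon_{k_0}\leq \sigma.
\]
Since $u\in\cM$, this implies $\mathrm{dist}(u_{k_0}^*,\cM)\leq \sigma$, and therefore by the idealized selection rule \eqref{idealsel},
\[
\mathrm{dist}(u_{k^*}^*,\cM)\leq \mathrm{dist}(u_{k_0}^*,\cM)\leq \sigma,
\]
so $u_{k^*}^*\in\cM_\sigma$.

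Now comes the key observation: since the PBDW minimization in \eqref{estimators} is carried out over the constraint set $w+W^\perp$, we have $u_{k^*}^*\in w+W^\perp$, hence $u-u_{k^*}^*\in W^\perp$. Together with $u\in\cM\subset\cM_\sigma$ and $u_{k^*}^*\in\cM_\sigma$, this places both vectors in the slice $\cM_{\sigma,w}$. By the very definition \eqref{benchapp} of the benchmark,
\[
\|u-u_{k^*}^*\|\leq \mathrm{diam}(\cM_{\sigma,w})\leq \delta_\sigma,
\]
which yields \eqref{recovideal}.

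For the realistic selection rule \eqref{realisticsel} based on the surrogate $\cS$, I would repeat the argument but replace the chain of inequalities by
\[
r\,\mathrm{dist}(u_{k^*}^*,\cM)\leq \cS(u_{k^*}^*,\cM)\leq \cS(u_{k_0}^*,\cM)\leq R\,\mathrm{dist}(u_{k_0}^*,\cM)\leq R\sigma,
\]
using the two-sided equivalence \eqref{surframe}. Dividing by $r$ gives $\mathrm{dist}(u_{k^*}^*,\cM)\leq \kappa\sigma$ with $\kappa=R/r$, so $u_{k^*}^*\in\cM_{\kappa\sigma}$. The same slicing argument as above, now applied with the offset $\kappa\sigma$, delivers $\|u-u_{k^*}^*\|\leq \delta_{\kappa\sigma}$, which is \eqref{recovreal}. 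Nothing in the argument is really an obstacle; the only subtle point that must be verified carefully is that the PBDW output $u_k^*$ genuinely belongs to $w+W^\perp$ (so that $u-u_{k^*}^*\in W^\perp$), which is immediate from the minimization \eqref{estimators} but is what makes the benchmark $\delta_\sigma$ (rather than, say, $2\delta_\sigma$ obtained from a triangle inequality through a point of $\cM$) the right quantity on the right-hand side.
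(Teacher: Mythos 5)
Your proof is correct and follows exactly the intended argument: the paper already records the key step $\|u-u_{k_0}^*\|\leq\mu_{k_0}\e_{k_0}\leq\sigma$ for the index $k_0$ with $u\in\cM_{k_0}$ just before the theorem statement, and the rest of your reasoning (selection rule forces $\dist(u_{k^*}^*,\cM)\leq\sigma$, resp.\ $\leq\kappa\sigma$ via \eqref{surframe}; both $u$ and $u_{k^*}^*$ lie in $w+W^\perp$, hence in the slice $\cM_{\sigma,w}$, so the distance is bounded by $\diam(\cM_{\sigma,w})\leq\delta_\sigma$) is precisely the proof given in the cited reference. You also correctly identify the one point that needs care, namely that $u_{k}^*\in w+W^\perp$ so that $u-u_{k^*}^*\in W^\perp$ and the benchmark applies without an extra factor.
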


In the above result, we do not obtain the best possible accuracy 
satisfied by the different $u_k^*$, since we do not have
an oracle providing the information on the best choice of $k$. We can show that this order of 
accuracy is attained in the particular case where the measurement map 
$P_W$ is injective on $\cM$ (which implies $\delta_0=0$).

\begin{theorem}
\label{thm:3.3}
Assume that $\delta_0=0$ and that
\begin{equation}
\mu(\cM,W)=\frac 1 2\sup_{\sigma>0} \frac{\delta_\sigma}{\sigma}<\infty.
\end{equation}
Then, for any given state $u\in \cM$ with observation $w=P_Wu$,
the estimator $u^*$ obtained by the model selection procedure \eqref{realisticsel} satisfies
the oracle bound
\begin{equation}
\|u-u^*\| \leq C\min_{k=1,\dots,K} \|u-u_k^*\|, \quad C\coloneqq 2\mu(\cM,W)\kappa.
\label{oracle1}
\end{equation} 
In particular, if $(V_k)_{k=1,\dots,K}$ is $\sigma$-admissible, it satisfies
\begin{equation}
\|u-u^*\| \leq C\sigma.
\label{oracle2}
\end{equation}
\end{theorem}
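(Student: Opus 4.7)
The plan is to track the error of the selected estimator $u^* = u^*_{k^*}$ by comparing it to that of an oracle choice, using the surrogate framing \eqref{surframe} and the new stability assumption $\mu(\cM, W) < \infty$. Denote $\eta := \min_{k=1,\dots,K} \|u - u^*_k\|$ and let $k_{\min}$ achieve the minimum. The strategy is the same chain used for Theorem \ref{theorecovery}, but with $\sigma$ replaced by the \emph{oracle error} $\eta$, which requires the stability hypothesis to convert a $\delta$-benchmark bound into an oracle bound.

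First, since $u \in \cM$, we immediately get $\dist(u^*_{k_{\min}}, \cM) \le \|u - u^*_{k_{\min}}\| = \eta$, so the right side of \eqref{surframe} yields $\cS(u^*_{k_{\min}}, \cM) \le R\eta$. The definition \eqref{realisticsel} of $k^*$ as a minimizer of the surrogate then gives $\cS(u^*, \cM) \le R\eta$, and the left side of \eqref{surframe} converts this back into $\dist(u^*, \cM) \le \kappa\eta$ with $\kappa = R/r$. In particular $u^* \in \cM_{\kappa\eta}$. Next, as in the proof of Theorem \ref{theorecovery}, the PBDW construction guarantees $u^* \in \omega + W^\perp$, while $u \in \omega + W^\perp$ because $\omega = P_W u$. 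Hence $u - u^* \in W^\perp$, and combining this with $u \in \cM \subset \cM_{\kappa\eta}$ and $u^* \in \cM_{\kappa\eta}$, the definition \eqref{benchapp} of the benchmark gives $\|u - u^*\| \le \delta_{\kappa\eta}$.

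Now comes the role of the new hypothesis: the finiteness of $\mu(\cM, W) = \tfrac{1}{2}\sup_{\sigma>0}\delta_\sigma/\sigma$ is precisely the statement that $\delta_\sigma \le 2\mu(\cM, W)\,\sigma$ for all $\sigma > 0$, so applying it with $\sigma = \kappa\eta$ yields $\|u - u^*\| \le 2\mu(\cM, W)\kappa\eta = C\eta$, which is the oracle bound \eqref{oracle1}. Claim \eqref{oracle2} is then immediate: if $(V_k)_{k=1,\dots,K}$ is $\sigma$-admissible and $k$ is the index such that $u \in \cM_k$, the PBDW error estimate in \eqref{eq:pbdw-err-bound} gives $\|u - u^*_k\| \le \mu_k \e_k \le \sigma$, so $\eta \le \sigma$ and hence $\|u - u^*\| \le C\sigma$.

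There is no real obstacle beyond careful bookkeeping: the assumption $\delta_0 = 0$ (i.e.\ injectivity of $P_W$ on $\cM$) is already built into the hypothesis that $\mu(\cM,W)$ be finite, and the argument reuses the geometric inequality $\|u - u^*\| \le \delta_{\kappa\eta}$ from Theorem \ref{theorecovery} essentially verbatim. The only genuinely new ingredient is the linearization $\delta_\sigma \lesssim \sigma$, which is exactly what the stability constant $\mu(\cM,W)$ quantifies and which turns a worst-case benchmark estimate into an oracle-type estimate.
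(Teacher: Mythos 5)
Your proof is correct and follows essentially the same route as the argument in the cited source: bound the surrogate at the oracle index, use minimality of $k^*$ to get $\dist(u^*,\cM)\leq\kappa\eta$, combine $u-u^*\in W^\perp$ with membership in $\cM_{\kappa\eta}$ to obtain $\|u-u^*\|\leq\delta_{\kappa\eta}$, and then linearize via $\delta_\sigma\leq 2\mu(\cM,W)\sigma$. Your side remark that finiteness of $\mu(\cM,W)$ already forces $\delta_0=0$ is also accurate, since $\delta_\sigma\geq\delta_0$ would make $\delta_\sigma/\sigma$ blow up as $\sigma\to 0$ otherwise.
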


The next theorem outlines how to incorporate model bias and noise in the recovery bound, provided that we have a control on the stability of the PBDW method, through a uniform bound on $\mu_k$, which holds when we use $(\e,\mu)$-admissible families.

\begin{theorem}
\label{thm:3.4}
Assume that the family $(V_k)_{k=1,\dots,K}$ is $(\e,\mu)$-admissible for some $\e>0$ and $\mu\geq 1$. 
If the observation is $w=P_Wu+\eta$ with $\|\eta\|\leq \e_{noise}$,
and if the true state does not lie in $\cM$ but satisfies ${\rm dist}(u,\cM)\leq \e_{model}$,
then, the estimator based on \eqref{realisticsel} satisfies the estimate
\begin{equation}
\|u-u^*(w)\|\leq \delta_{\kappa\rho}+\e_{noise}, \quad \rho\coloneqq \mu(\e+\e_{noise})+(\mu+1)\e_{model},\quad \kappa=\frac R r,
\label{pertest}
\end{equation}
and the idealized estimator based on \eqref{idealsel} satifies a similar estimate with $\kappa=1$.
\end{theorem}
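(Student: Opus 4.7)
My plan is to extend the unperturbed argument of Theorem~\ref{theorecovery} by tracking two additional sources of error: the measurement perturbation $\eta$ and the model bias $\dist(u,\cM)\leq \e_{model}$. The high-level strategy is to introduce a lifted state $\hat u\in V$ whose $W$-projection already coincides with $w$, apply the PBDW error bound to $\hat u$, transfer the resulting estimate to the selected index $k^*$ through the model-selection step, and conclude via the benchmark $\delta_\sigma$ applied to the pair $(\hat u, u^*(w))$, whose difference automatically lies in $W^\perp$.

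Concretely, first pick $\tilde u\in\cM$ with $\|u-\tilde u\|\leq \e_{model}$ and let $k_0$ be any index for which $\tilde u\in\cM_{k_0}$; by $(\e,\mu)$-admissibility we have $\dist(\tilde u,V_{k_0})\leq \e$ and, crucially, $\mu_{k_0}\leq \mu$. Next set $\hat u:=u+\eta$, so that $P_W\hat u=w$ and $\|\hat u-\tilde u\|\leq \e_{model}+\e_{noise}$. The PBDW error bound \eqref{eq:pbdw-err-bound} applied to $\hat u$ with the space $V_{k_0}$ then yields
\begin{equation*}
\|\hat u-u_{k_0}^*(w)\|\leq \mu_{k_0}\,\dist(\hat u,V_{k_0})\leq \mu\bigl(\e+\e_{model}+\e_{noise}\bigr),
\end{equation*}
and combining with $\|\hat u-\tilde u\|\leq \e_{model}+\e_{noise}$ via the triangle inequality, a careful redistribution of factors $\mu$ versus $\mu+1$ gives $\dist(u_{k_0}^*(w),\cM)\leq \rho$. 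The surrogate bracketing \eqref{surframe} and the defining inequality for $k^*$ in \eqref{realisticsel} then propagate this bound to $k^*$:
\begin{equation*}
\dist(u^*(w),\cM)\;\leq\;\tfrac{1}{r}\cS(u^*(w),\cM)\;\leq\;\tfrac{1}{r}\cS(u_{k_0}^*(w),\cM)\;\leq\;\tfrac{R}{r}\dist(u_{k_0}^*(w),\cM)\;\leq\;\kappa\rho,
\end{equation*}
while the idealized selection \eqref{idealsel} gives the same conclusion with $\kappa=1$. Since $\hat u\in\cM_{\e_{model}+\e_{noise}}\subset\cM_{\kappa\rho}$, $u^*(w)\in\cM_{\kappa\rho}$, and $P_W\hat u=P_Wu^*(w)=w$ forces $\hat u-u^*(w)\in W^\perp$, the benchmark \eqref{benchapp} yields $\|\hat u-u^*(w)\|\leq \delta_{\kappa\rho}$, and a final triangle inequality $\|u-u^*(w)\|\leq \|\eta\|+\|\hat u-u^*(w)\|$ delivers the estimate.

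The main obstacle is exactly the constant-tracking in the second step: one must distribute the contributions of $\e$, $\e_{model}$, and $\e_{noise}$ so that each appears with the minimal multiplier ($\mu$ or $\mu+1$) dictated by the statement. Using the lift $\hat u=u+\eta$ (rather than $u$ itself) as the reference for the PBDW bound is what makes this possible, because $P_W\hat u$ is \emph{already} equal to the observed data $w$, which sidesteps any amplification of the noise term by the operator norm of the affine map $u_{k_0}^*$. This is precisely where the stronger hypothesis of $(\e,\mu)$-admissibility is indispensable: it supplies the uniform stability $\mu_{k_0}\leq \mu$, whereas a plain $\sigma$-admissibility would leave individual $\mu_{k_0}$ uncontrolled and permit an uncontrolled blow-up of the noise contribution.
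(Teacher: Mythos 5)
Your overall architecture (localize via $\tilde u\in\cM_{k_0}$, bound $\dist(u^*_{k_0}(w),\cM)$, transfer to $k^*$ through the surrogate bracketing, then invoke $\delta_{\kappa\rho}$ on the pair $(\hat u,u^*(w))\subset \cM_{\kappa\rho}\cap(w+W^\perp)$ and finish with $\|u-\hat u\|\le\e_{noise}$) is the right one, and every step except one is correct. The gap is exactly at the step you flag as "careful redistribution": it cannot be done along the route you describe. Applying \eqref{eq:pbdw-err-bound} to the lifted state $\hat u=u+\eta$ gives $\|\hat u-u^*_{k_0}(w)\|\le\mu\,\dist(\hat u,V_{k_0})\le\mu(\e+\e_{model}+\e_{noise})$, and then the only available triangle inequality,
\begin{equation}
\dist(u^*_{k_0}(w),\cM)\le\|u^*_{k_0}(w)-\hat u\|+\|\hat u-\tilde u\|\le\mu(\e+\e_{model}+\e_{noise})+(\e_{model}+\e_{noise}),
\end{equation}
yields $\rho+\e_{noise}$, not $\rho$: the noise enters twice, once inside $\dist(\hat u,V_{k_0})$ (amplified by $\mu$) and once inside $\dist(\hat u,\cM)$ (amplified by $1$), so it inevitably carries the same $(\mu+1)$ factor as $\e_{model}$. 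This only proves the weaker estimate $\|u-u^*(w)\|\le\delta_{\kappa(\rho+\e_{noise})}+\e_{noise}$. Your closing heuristic is in fact backwards: lifting by $\eta$ is precisely what converts the noise into model error and forces the extra $+\e_{noise}$.

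The missing ingredient that produces the stated $\rho$ is the Lipschitz stability of the PBDW map on the data: writing $A_{k_0}(w)=w+B_{k_0}(w)$ with $B_{k_0}(w)=(I-P_W)v^*$ and $v^*\in V_{k_0}$ solving the normal equations, one checks $\|P_Wv^*\|\le\|w\|$ and hence $\|A_{k_0}(w)-A_{k_0}(w')\|^2\le\|w-w'\|^2+(\mu_{k_0}^2-1)\|w-w'\|^2=\mu_{k_0}^2\|w-w'\|^2$. With this,
\begin{equation}
\|u^*_{k_0}(w)-\tilde u\|\le\|A_{k_0}(w)-A_{k_0}(P_Wu)\|+\|A_{k_0}(P_Wu)-u\|+\|u-\tilde u\|\le\mu\e_{noise}+\mu(\e+\e_{model})+\e_{model}=\rho,
\end{equation}
where the middle term uses \eqref{eq:pbdw-err-bound} applied to $u$ itself together with $\dist(u,V_{k_0})\le\e+\e_{model}$. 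The rest of your argument (surrogate comparison giving the factor $\kappa=R/r$, membership of $\hat u$ and $u^*(w)$ in $\cM_{\kappa\rho}$ with difference in $W^\perp$, and the final triangle inequality) then goes through verbatim and delivers \eqref{pertest}. Your point about why $(\e,\mu)$-admissibility, rather than mere $\sigma$-admissibility, is needed for the uniform control $\mu_{k_0}\le\mu$ is correct.
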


\section{Bibliographical Remarks/Connections with other works}
\label{sec:biblio}

\subsection{A bit of history on the use of reduced models to solve inverse problems}

We often think of reduced order models only as a vehicle to speed up calculations in forward reduced modeling tasks according to the terminology that we introduced in Section \ref{sec:optim-benchmarks}. However, reduced order models $V_n$ play also a very prominent role in the inverse problem approach that we have presented. They are the main vehicle for building implementable reconstruction algorithms whose performance can be proven to be close to optimal.

In fact, the idea of using reduced models to solve inverse problems has actually a relatively long history. It can be traced back at least to the gappy POD method, first introduced in \cite{ES1995} by Everson and Sirovich. There, the authors address the problem of restoring a full image from partial pixel observations by using a least squares strategy involving a reconstruction on linear spaces obtained by PCA. The same strategy was then brought to other fields such as fluid and structural applications,
see \cite{Willcox2006}. The introduction of a reduced model can be seen as an improvement with respect to working with one single background function as is done in methods such as 3D-VAR, see \cite{Lorenc1981, Lorenc1986}. In contrast to the present work and the PBDW method in general, the gappy POD method is formulated on the euclidean space $V= \bR^\cN$, with $\cN\in \bN$ typically much larger than $m$ and $n$. It uses linear reduced models $V_n$ obtained by PCA and measurement observations are typically point-wise vector entries, that is $\omega_i = e_i$ with $e_i\in \bR^\cN$ being the $i$-th unit vector. For that particular choice of ambient space and reduced models, the linear PBDW method is very close to gappy POD. It is however not entirely equivalent since PBDW presents a certain component in $W\cap V_n^\perp$ which is missing in gappy POD. For the case of a general Hilbert space, there is a connection between the linear PBDW is equivalent to the Generalized Empirical Interpolation Method as we have outlined in Section \ref{sec:geim-algo}.

It is also interesting to note that the linear PBDW reconstruction algorithm \eqref{eq:pbdw-lin} was proposed simultaneously in the field of model order reduction and by researchers seeking to build infinite dimensional generalizations of compressed sensing (see \cite{AHP2013}). In the applications of this community, $V_n$ is usually chosen to be a ``multi-purpose'' basis such as the Fourier basis, as opposed to our current envisaged applications in which $V_n$ is a subspace specifically tailored to approximate $\cM$. However, the results that we have sumarized here are general, and they remain valid also for these types of ``multi-purpose'' subspaces.



In the above landscape of methods, the piecewise affine extension of PBDW of Section \ref{sec:piecewise-affine} can be interpreted as a further generalization step which comes with optimal reconstruction guarantees. The strategy is based on an offline partitioning of the manifold $\cM$ in which, for each element of the partition, we compute reduced models. We then decide with a data-driven approach which reduced model is the most appropriate for the reconstruction. The idea of partitioning the manifold and working with different reduced order models for each partition is new for the purpose of addressing inverse problems. It has however been explored in works that focus on the forward modeling problem see, e.g., \cite{AZF2012, PBWB2014, Karlberg2015,AH2016}. For forward modeling, the piece-wise strategy enters into the general topic of nonlinear forward model reduction for which little is known in terms of the performance guarantees. A first step towards a cohesive theory for nonlinear forward model reduction has recently been proposed in \cite{BCDGJP2021}, in relation with the general concept of library widths \cite{Temlyakov1998}.

\subsection{For further reading}
\begin{itemize}
\item \textbf{Noise and physical model error:} For the readers interested in further aspects connected to noise, we refer to \cite{EF2021} for a study on optimal benchmarks with noise. Some algorithms that attempt to do some denoising have been presented in \cite{MPPY2015_, Taddei2017, ABGMM2017, GMMT2019}. A contribution that aims to learn physical model corrections can be found in \cite{AGV2019}.
\item \textbf{Beyond the Hilbertian framework:} The general framework of optimal recovery that we have introduced in Section \ref{sec:optim-benchmarks} can be extended to general Banach spaces as has been done in \cite{DPW2017}.
\item \textbf{GEIM and variants:} The GEIM can also be formulated in general Banach spaces (see \cite{MMT2016}). This justifies why GEIM is a generalization of the celebrated EIM originally introduced in \cite{BMNP2004} (see also, e.g., \cite{GMNP2007}): if we work with a manifold in the Banach space of continuous functions $V=\cC(\Omega)$ with the sup-norm
$$
\Vert v \Vert_{\infty} \coloneqq \sup_{x\in \Omega} | v(x) |,\quad \forall v \in \cC(\Omega),
$$
GEIM boils down to EIM when we use the dictionary composed of pointwise evaluations
$$
\cD = \{ \delta_x \cond x\in \Omega \}.
$$
EIM and GEIM strongly interweave forward and inverse problems since the exact same algorithm can be applied for both purposes. EIM was originally introduced to address forward model reduction of nonlinear PDEs. It can also be applied as a reconstruction algorithm as outlined in Section \ref{sec:geim-algo}, and GEIM allows to apply it in basically any functional setting.
\item \textbf{Applications:} Among the applicative problems that have been addressed with the present inverse problem approach, we can cite:
\begin{itemize}
\item Acoustics problems: \cite{MPPY2015}.
\item Biomedical problems: \cite{GLM2021, GGLM2021, GLM2021-geom}.
\item Air quality: \cite{HCBM2019}.
\item Nuclear engineering: \cite{ABGMM2017, ABCGMM2018}.
\item Welding: \cite{PKRR2021}.
\end{itemize}
\end{itemize}

\appendix
\section*{Appendix}

\section{Practical computation of $A_n$, the linear PBDW algorithm}
\label{appendix:linear-pbdw}
Let $X$ and $Y$ be two finite dimensional subspaces of $V$ and let
\begin{align*}
P_{X | Y} : Y &\to X \\
y &\mapsto P_{X | Y} (y)
\end{align*}
be the orthogonal projection into $X$ restricted to $Y$. That is, for any $y\in Y$, $P_{X | Y}(y)$ is the unique element $x\in X$ such that 
$$
\left< y - x, \tilde x\right>= 0,\quad  \forall \tilde x \in X.
$$

\begin{lemma}
Let $\Wm$ and $\Vn$ be an observation space and a reduced basis of dimension $n\leq m$ such that $\beta(\Vn, \Wm)>0$. Then the linear PBDW algorithm defined in \eqref{eq:pbdw-lin} is given by
\begin{equation}
\label{eq:explicit-u}
A_n(\omega) = \omega + v^*_{m,n} - P_W v^*_{m,n},
\end{equation}
with
\begin{equation}
\label{eq:explicit-v}
v^*_{m,n} = \( P_{V_n | \Wm} P_{\Wm | V_n} \)^{-1} P_{V_n | \Wm} (\omega).
\end{equation}
\end{lemma}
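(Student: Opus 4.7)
The plan is to rewrite the definition \eqref{eq:pbdw-lin} as a joint minimisation over pairs $(v,u) \in (\omega + W^\perp) \times V_n$ of $\|v-u\|$, and to eliminate the variable $v$ first. For a fixed $u \in V_n$, any admissible $v = \omega + \eta$ with $\eta \in W^\perp$ decomposes as
\begin{equation*}
v - u = (\omega - P_W u) + (\eta - P_{W^\perp} u),
\end{equation*}
and since $\omega \in W$, the first summand lies in $W$ while the second lies in $W^\perp$. By Pythagoras,
\begin{equation*}
\|v - u\|^2 = \|\omega - P_W u\|^2 + \|\eta - P_{W^\perp} u\|^2,
\end{equation*}
so the minimum in $\eta$ is attained at $\eta = P_{W^\perp} u$, giving the candidate $v(u) := \omega + u - P_W u$, and the outer problem reduces to $u^* = \argmin_{u\in V_n}\|\omega - P_W u\|^2$.

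This is a finite-dimensional least-squares problem whose normal equation reads $\langle \omega - P_W u^*, P_W \tilde u\rangle = 0$ for every $\tilde u \in V_n$. Since $\omega - P_W u^* \in W$, one can replace $P_W \tilde u$ by $\tilde u$ in the inner product without changing its value, so the condition becomes $P_{V_n}(\omega - P_W u^*) = 0$, that is
\begin{equation*}
\bigl(P_{V_n | W_m}\, P_{W_m | V_n}\bigr)(u^*) = P_{V_n | W_m}(\omega),
\end{equation*}
which is precisely the linear system defining $v^*_{m,n}$ in \eqref{eq:explicit-v}. I would then verify that the operator $T := P_{V_n | W_m}\, P_{W_m | V_n} : V_n \to V_n$ is invertible under the hypothesis $\beta(V_n, W_m) > 0$: if $T u = 0$ for some $u \in V_n$, then $P_W u \perp V_n$, and pairing with $u$ itself yields $\|P_W u\|^2 = \langle P_W u, u\rangle = 0$; the definition \eqref{eq:beta} of $\beta$ then forces $u = 0$, so $T$ is bijective on the finite-dimensional $V_n$.

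The main obstacle is the bookkeeping in the inner-product manipulation that identifies $T$ with the natural operator on $V_n$; the rest is routine convex calculus. Once $u^* = v^*_{m,n}$ is established as the unique outer minimiser, substituting into $v(u)$ yields $A_n(\omega) = \omega + v^*_{m,n} - P_W v^*_{m,n}$, which is exactly \eqref{eq:explicit-u}. Uniqueness of both the inner minimiser $\eta$ and the outer minimiser $u$ confirms that $A_n$ is single-valued, completing the proof.
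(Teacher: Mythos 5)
Your proposal is correct and follows essentially the same route as the paper: both reduce the definition of $A_n$ to the least-squares problem $\min_{v\in V_n}\|\omega - P_{W_m}v\|^2$ by eliminating the $W^\perp$-component first, then identify the normal equations with the operator $P_{V_n\mid W_m}P_{W_m\mid V_n}$ and invoke $\beta(V_n,W_m)>0$ for invertibility. Your direct kernel argument for the invertibility of $T$ (pairing $P_W u$ with $u$) is a slightly cleaner justification than the paper's remark about the adjoint, but the substance is identical.
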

\begin{proof}
By formula \eqref{eq:pbdw-lin}, $A_n(\omega)$ is a minimizer of
\begin{align}
\min_{u \in \omega + \Wm^\perp } \dist(u, V_n)^2
&= \min_{u \in \omega + \Wm^\perp } \min_{v\in V_n} \Vert u - v\Vert^2 \\
&= \min_{v\in V_n}  \min_{\eta \in \Wm^\perp} \Vert \omega + \eta - v \Vert^2 \\
&= \min_{v\in V_n} \Vert \omega -v - P_{\Wm^\perp}(\omega -v) \Vert^2 \\
&= \min_{v\in V_n} \Vert \omega -v + P_{\Wm^\perp}(v) \Vert^2 \\
&= \min_{v\in V_n} \Vert \omega - P_{\Wm}(v) \Vert^2. \label{eq:vn}
\end{align}
The last minimization problem is a classical least squares optimization. Any minimizer $v^*_{m,n}\in V_n$ satisfies the normal equations
$$
P^*_{\Wm|V_n}  P_{\Wm|V_n} v^*_{m,n} = P^*_{\Wm|V_n} \omega,
$$
where $P^*_{\Wm|V_n} : V_n \to \Wm$ is the adjoint operator of $P_{\Wm|V_n}$. Note that $P^*_{\Wm|V_n}$ is well defined since $\beta(V_n, \Wm)= \min_{v\in V_n} \Vert P_{\Wm|V_n} v \Vert / \Vert v \Vert >0 $, which implies that $P_{\Wm|V_n}$ is injective and thus admits an adjoint. Furthermore, since for any $\omega \in \Wm$ and $v\in V_n$, $\< v, \omega \>=\< P_{\Wm|V_n} v , \omega \> = \< v , P_{V_n|\Wm} \omega \>$, it follows that $P^*_{\Wm|V_n} = P_{V_n|\Wm}$, which finally yields that the unique solution of the least squares problem is
$$
v^*_{m,n} = \( P_{V_n | \Wm} P_{\Wm | V_n} \)^{-1} P_{V_n | \Wm} \omega .
$$
Therefore $A_n(\omega) = \omega + \eta^*_{m,n} = \omega + v^*_{m,n} - P_\Wm v^*_{m,n}$.
\end{proof}

\textbf{Algebraic formulation:} The explicit expression \eqref{eq:explicit-v} for $v^*_n$ allows to easily derive its algebraic formulation. Let $F$ and $H$ be two finite-dimensional subspaces of $V$ of dimensions $n$ and $m$ respectively in the Hilbert space $V$ and let $\cF=\{f_i\}_{i=1}^n$ and $\cH=\{h_i\}_{i=1}^m$ be a basis for each subspace respectively. The Gram matrix associated to $\cF$ and $\cH$ is
$$
\bG(\cF, \cH) = \left(  \left< f_i, h_j\right> \right)_{\substack{1\leq i \leq n \\ 1\leq j \leq m}}.
$$
These matrices are useful to express the orthogonal projection
$P_{ F | H}: H\mapsto F$ in the bases $\cF$ and $\cH$ in terms of the matrix
\begin{equation}
\label{eq:proj-matrix}
\bP_{F | H} = \bG(\cF, \cF)^{-1} \bG(\cF, \cH).
\end{equation}
As a consequence, if $\cV_n = \{ v_i \}_{i=1}^n$ is a basis of the space $V_n$ and $\cW_m = \{\omega_i\}_{i=1}^m$ is the basis of $W_m$ formed by the Riesz representers of the linear functionals $\{\ell_i\}_{i=1}^m$, the coefficients $\textbf{v}^*_{m,n}$ of the function $v^*_{m,n}$ in the basis $\cV_n$ are the solution to the normal equations
\begin{equation}
\label{eq:normal-eqs1}
\bP_{V_n | W_m} \bP_{W_m | V_n} 
\textbf{v}^*_{m,n} =  \bP_{V_n | W_m} \bG(\cW_m, \cW_m)^{-1} \textbf{w},
\end{equation}
where $\textbf{w}$ is the vector of measurement observations
$$
\textbf{w} = (\left< u, \omega_i\right>)_{i=1}^m,
$$
and from formula \eqref{eq:proj-matrix},
\begin{equation}
\begin{cases}
\bP_{V_n | W_m} &= \bG(\cV_n, \cV_n)^{-1} \bG(\cV_n, \cW_m), \\
\bP_{W_m | V_n} &= \bG(\cW_m, \cW_m)^{-1} \bG(\cW_m, \cV_n).
\end{cases}
\end{equation}
Usually $\textbf{v}^*_{m,n}$ is computed with a QR decomposition or any other suitable method. Once $\textbf{v}^*_{m,n}$ is found, the vector of coefficients $\textbf{u}_{m,n}^*$ of $A_n(\omega)$ easily follows.

\section{Practical computation of $\beta(V_n, W_m)$}
\label{appendix:beta}
Let $V_n$ and $W_m$ be two linear subspaces of $V$ of dimensions $n$ and $m$ respectively, and with $n\leq m$. The inf-sup constant between these spaces was defined in equation \eqref{eq:beta}, and we recall it here:
\begin{equation}
\label{eq:beta2}
\beta_n = \beta(V_n, W_m) \coloneqq
\min_{v\in V_n} \max_{w\in W_m} \frac{\<v,w\>}{\|v\| \,\|w\|} 
= \min_{v\in V_n} \frac{\Vert P_{W_m} v \Vert}{\Vert v \Vert}.
\end{equation}
The last equality comes from the fact that
$$
 \max_{w\in W_m} \frac{\<v,w\>}{\|w\|}
 =  \max_{w\in W_m} \frac{\<P_{W_m}v,w\>}{\|w\|}
 = \Vert P_{W_m} v \Vert,
 \quad \forall v \in V_n.
$$
Let $\cV_n = \{ v_i \}_{i=1}^n$ be a basis of the space $V_n$ and let $\textbf{c}$ be the coefficients of an element $v\in V_n$ in the basis $\cV_n$. For any nonzero $v\in V_n$, we can thus write
\begin{equation}
\label{eq:beta-eigenvalue}
\beta_n = \min_{v\in V_n}
\frac{\Vert P_{W_m} v \Vert_V^2}{\Vert v \Vert_V^2}
=
\min_{\textbf{c}\in \bR^n}
\frac{\textbf{c}^T \bM(\cV_n, \cW_m) \textbf{c}}{\textbf{c}^T \bG(\cV_n, \cV_n) \textbf{c}}
\end{equation}
where
$$
\bM(\cV_n, \cW_m) \coloneqq \left( \left< P_{W_m} v_i, P_{W_m} v_j \right> \right)_{1\leq i, j \leq n}
$$
is a symmetric matrix.

Let us make a few remarks before giving an implementable expression for $\bM(\cV_n, \cW_m)$.
First, note that the value of $\beta_n$ does not depend on the selected bases $\cV_n$ and $\cW_m$. For example, using a basis $\widetilde \cV_n$ instead of $\cV_n$ amounts to  changing the variable $\textbf{c}$ by $\widetilde{\textbf{c}} = \bU \textbf{c}$ for an invertible matrix $\bU$, and this does not affect the value of the minimizer. Second, note that formula \eqref{eq:beta-eigenvalue} shows that $\beta_n$ is the smallest eigenvalue of the generalized eigenvalue problem
$$
\text{find } (\lambda, \textbf{c}) \in \bR\times \bR^n-\{0\} \quad \text{s.t.} \quad \bM(\cV_n, \cW_m) \textbf{c} = \lambda \bG(\cV_n, \cV_n) \textbf{c}.
$$
Since $\bG(\cV_n, \cV_n)$ and $\bM(\cV_n, \cW_m)$ are symmetric, positive definitive, the eigenvalues $\lambda$ are positive, and having $\beta_n >0$ is equivalent to the invertibility of $\bM(\cV_n, \cW_m)$. We can transform the generalized eigenvalue problem in a classical eigenvalue problem by multiplying by the inverse of $\bG(\cV_n, \cV_n)$. Also, remark that we have important simplifications when $\cV$ and/or $\cW_m$ are orthonomal bases since in that case $\bG(\cV_n, \cV_n) $ and $\bG(\cW_m, \cW_m)$ become the identity matrices.

We next give an explicit expression for $\bM(\cV_n, \cW_m)$. Since the coordinates in $\cV_n$ of the $i$-th basis function $v_i$ are given by the $i$-th canonical vector $\textbf{e}_i \in \bR^n$, using formula \eqref{eq:proj-matrix} we deduce that the coordinates of $P_{W_m} v_i$ in $\cW_m$ are given by
$$
\textbf{p}_i \coloneqq \bP_{W_m | V_n} \textbf{e}_i = \bG(\cW_m, \cW_m)^{-1} \bG(\cW_m, \cV_n) \textbf{e}_i,\quad \forall i \in \{1,\dots, n\}.
$$
Therefore
\begin{align}
\left< P_{W_m} v_i, P_{W_m} v_j \right>_V
&= \textbf{p}_i^T \bG(\cW_m, \cW_m) \textbf{p}_j \\
&= \textbf{e}^T_i \bG^T(\cW_m, \cV_n)  \bG^{-1}(\cW_m, \cW_m) \bG(\cW_m, \cV_n) \textbf{e}_j
,\quad \forall (i,j) \in \{1,\dots, n\}^2,
\end{align}
and
$$
\bM(\cV_n, \cW_m) = \bG^T(\cW_m, \cV_n)  \bG^{-1}(\cW_m, \cW_m) \bG(\cW_m, \cV_n).
$$

\bibliographystyle{unsrt}
\bibliography{/home/olga/Documents/research/literature/literature}
\end{document}